\newtheorem{theorem}{Theorem}[section]
\newtheorem{lemma}[theorem]{Lemma}
\newtheorem{proposition}[theorem]{Proposition}
\newtheorem{corollary}[theorem]{Corollary}
\theoremstyle{definition}
\newtheorem{definition}[theorem]{Definition}
\theoremstyle{remark}
\newtheorem{remark}[theorem]{Remark}
\numberwithin{equation}{section}
\newtheorem*{theorem*}{Theorem}
\begin{document}
\title[A new seminorm of $n$-tuple operators and its applications] 
{ A new seminorm of $n$-tuple operators and its applications  } 


\author[ P. Bhunia  and M. Guesba]{ Pintu Bhunia and Messaoud Guesba}

\address[P.~Bhunia]{Department of Mathematics, Indian Institute of
Science, Bangalore 560012, India}
\email{\tt pintubhunia5206@gmail.com; pintubhunia@iisc.ac.in}

\address[M. Guesba]{Faculty of Exact Sciences, Department of Mathematics\\
El Oued University, 39000 Algeria}
\email{guesbamessaoud2@gmail.com, guesba-messaoud@univ-eloued.dz}

\thanks{Pintu Bhunia was supported by National Post-Doctoral Fellowship PDF/2022/000325 from SERB (Govt.\ of India) and SwarnaJayanti Fellowship SB/SJF/2019-20/14 (PI: Apoorva Khare) from SERB (Govt.\ of India).
He also would like to thank National Board for Higher Mathematics (Govt.\ of India) for the financial support in the form of NBHM Post-Doctoral Fellowship 0204/16(3)/2024/R\&D-II/6747.
}

\subjclass[2020]{47A05, 47A12, 47A30, 47A63} 
\keywords{$A$-Euclidean operator radius, $A$-joint numerical radius, $A$-joint operator norm, Seminorm}
\maketitle

\begin{abstract}
We introduce a new seminorm of $n$-tuple operators,
which generalizes the $A$-Euclidean operator radius of $n$-tuple bounded linear operators on a complex Hilbert space. We
introduce and study basic properties of this seminorm. As an application of
the present study, we estimate bounds for the $A$-Euclidean operator radius ($A$-joint numerical radius). In addition, we improve on
some of the important existing $A$-numerical radius inequalities
and related results.
\end{abstract}

\bigskip

\bigskip
\section{\protect\Large Introduction and Preliminaries }

Throughout this paper, ${\mathcal{H}}$ denotes a non-trivial complex Hilbert
space with inner product $\left\langle .,.\right\rangle $ and the associated
norm $\left\Vert \cdot\right\Vert $. Let ${\mathcal{B}}({\mathcal{H}})$ be the $%
C^{\ast }$-algebra of all bounded linear operators on ${\mathcal{H}}$. An
operator $A\in \mathcal{B}(\mathcal{H})$ is called positive if $\left\langle
Ax,x\right\rangle \geq 0$ for all $x\in \mathcal{H}$, and we denote it $%
A\geq 0$. For every operator $T\in $ ${\mathcal{B}}({\mathcal{H}})$, ${%
\mathcal{N}}\left( T\right) $, ${\mathcal{R}}\left( T\right) $ and $%
\overline{{\mathcal{R}}\left( T\right) }$ stand for the null
space, the range and the closure of the range of $T$, respectively, and $T^{\ast }$ stands for the adjoint of $T.$
Any positive operator $A$ defines a positive semi-definite sesquilinear form
\begin{equation*}
\left\langle .,.\right\rangle _{A}:{\mathcal{H}}\times {\mathcal{H}}%
\rightarrow 
\mathbb{C}
\text{, \ }\left\langle x,y\right\rangle _{A}:=\left\langle
Ax,y\right\rangle \text{.}
\end{equation*}
This semi-inner product induces a seminorm $\left\Vert
.\right\Vert _{A}$, which is defined by
$\left\Vert x\right\Vert _{A}:=\left\langle Ax,x\right\rangle ^{\frac{1}{2}%
}=\left\Vert A^{{1}/{2}}x\right\Vert \text{.}
$ 
Observe that $\left\Vert x\right\Vert _{A}=0$ if and only if $x\in {\mathcal{%
N}}\left( A\right) $. Therefore, $\left\Vert .\right\Vert _{A}$ is a norm on ${%
\mathcal{H}}$ if and only if $A$ is an injective operator. Also, the
semi-normed space $\left( {\mathcal{B}}({\mathcal{H}}),\left\Vert
.\right\Vert _{A}\right) $ is complete if and only if ${\mathcal{R}}\left(
A\right) $ is closed.
An operator $W\in {\mathcal{B}}({%
\mathcal{H}})$\ is called an $A$-adjoint of $T$ if
$\left\langle Tx,y\right\rangle _{A}=\left\langle x,Wy\right\rangle _{A}\text{
for very }x,y\in {\mathcal{H}}\text{,}
$
or equivalently, 
$AW=T^{\ast }A\text{.}
$
An operator $T$ is called $A$-self-adjoint if $AT=T^{\ast }A$, and it is
called $A$-positive if $AT$ is positive, we write $T\geq _{A}0$.
The existence of an $A$-adjoint operator is not guaranteed. The set of all
operators which admit $A$-adjoints is denoted by ${\mathcal{B}}_{A}({%
\mathcal{H}})$. By Douglas theorem \cite{Dou}, we get%
\begin{eqnarray*}
{\mathcal{B}}_{A}({\mathcal{H}}) &=&\left\{ T\in {\mathcal{B}}({\mathcal{H}}%
):{\mathcal{R}}\left( T^{\ast }A\right) \subseteq {\mathcal{R}}\left( A\right)
\right\} \\
&=&\left\{ T\in {\mathcal{B}}({\mathcal{H}}):\exists \text{ }c>0 \textit{ such that }\left\Vert
ATx\right\Vert \leq c\left\Vert Ax\right\Vert ,\forall \text{ }x\in {%
\mathcal{H}}\right\} \text{.}
\end{eqnarray*}
If $T\in {\mathcal{B}}_{A}({\mathcal{H}})$, then $T$ admits an $A$-adjoint. Moreover, there exists a distinguished $A$-adjoint of $T$, namely, the reduced solution of the equation $AX=T^{\ast }A$, i.e., $%
T^{\sharp _{A}}=A^{\dag }T^{\ast }A$, where $A^{\dag }$ is the Moore-Penrose
inverse of $A$. The $A$-adjoint operator $T^{\sharp _{A}}$ verifies 
\begin{equation*}
AT^{\sharp _{A}}=T^{\ast }A\text{, }{\mathcal{R}}\left( T^{\sharp
_{A}}\right) \subseteq \overline{{\mathcal{R}}\left( A\right) }\text{ and }{%
\mathcal{N}}\left( T^{\sharp _{A}}\right) ={\mathcal{N}}\left( T^{\ast
}A\right) \text{.}
\end{equation*}
Again, by applying Douglas theorem \cite{Dou}, we have
\begin{equation*}
{\mathcal{B}}_{A^{{1}/{2}}}({\mathcal{H}})=\left\{ T\in {\mathcal{B}}({%
\mathcal{H}}):\exists \text{ }c>0 \textit{ such that } \left\Vert Tx\right\Vert _{A}\leq
c\left\Vert x\right\Vert _{A},\forall \text{ }x\in {\mathcal{H}}\right\} 
\text{.}
\end{equation*}
Any operator in ${\mathcal{B}}_{A^{{1}/{2}}}({\mathcal{H}})$ is called
 $A$-bounded operator. Moreover, it was given in \cite{AR} that if $T\in {%
\mathcal{B}}_{A^{{1}/{2}}}({\mathcal{H}})$, then
$\left\Vert T\right\Vert _{A}:=\sup\limits_{x\notin {\mathcal{N}}\left(
A\right) }\frac{\left\Vert Tx\right\Vert _{A}}{\left\Vert x\right\Vert _{A}}%
=\sup\limits_{\substack{ x\in {\mathcal{H}}  \\ \left\Vert x\right\Vert
_{A}=1 }}\left\Vert Tx\right\Vert _{A}\text{.}$
In addition, if $T$ is $A$-bounded, then $T\left( {\mathcal{N}}\left(
A\right) \right) \subseteq $ ${\mathcal{N}}\left( A\right) $ and
$\left\Vert Tx\right\Vert _{A}\leq \left\Vert T\right\Vert _{A}\left\Vert
x\right\Vert _{A}\text{, }\forall \text{ }x\in {\mathcal{H}}\text{.}
$   
For $T\in {\mathcal{B}}_{A}({\mathcal{H}})$, the following statements
hold:
\begin{itemize}

   \item  If $AT=TA$ then $T^{\sharp _{A}}=P_{\overline{\Re \left( A\right) }
}T^{\ast }$, where $P_{\overline{\Re \left( A\right) }
}$ is the orthogonal projection onto ${\overline{\Re \left( A\right) }
}$.

   \item  $T^{\sharp _{A}}\in {\mathcal{B}}_{A}({\mathcal{H}})$, $\left( T^{\sharp
_{A}}\right) ^{{\sharp _{A}}}=P_{\overline{\Re \left( A\right) }}TP_{%
\overline{\Re \left( A\right) }}$ and $\left( \left( T^{\sharp _{A}}\right)
^{\sharp _{A}}\right) ^{^{\sharp _{A}}}=T^{\sharp _{A}}$.

   \item  $T^{\sharp_A}T$ and $TT^{\sharp_A}$ are $A$-self-adjoint and $A$-positive operators.

   \item  If $S\in {\mathcal{B}}_{A}({\mathcal{H}})$, then $TS\in {\mathcal{B}}_{A}(%
{\mathcal{H}})$ and $\left( TS\right) ^{\sharp _{A}}=S^{\sharp
_{A}}T^{^{\sharp _{A}}}$.

   \item  $\left\Vert T\right\Vert _{A}=$ $\left\Vert T^{\sharp _{A}}\right\Vert
_{A}$ $=\left\Vert T^{\sharp _{A}}T\right\Vert _{A}^{{1}/{2}}=\left\Vert
TT^{\sharp _{A}}\right\Vert _{A}^{{1}/{2}}$.
\end{itemize}
Furthermore, for all $T,S\in {\mathcal{B}}_{A^{{1}/{2}}}({\mathcal{H}})$, we have
$\left\Vert TS\right\Vert _{A}\leq \left\Vert T\right\Vert _{A}\left\Vert
S\right\Vert _{A}\text{.} $ 


The numerical radius of $T\in {\mathcal{B}}({%
\mathcal{H}})$ is defined as
$\omega \left( T\right) :=\sup \left\{ | \left\langle Tx,x\right\rangle |:x\in {%
\mathcal{H}}\text{, }\left\Vert x\right\Vert =1\right\} \text{.}
$
There have been many generalizations of the numerical radius. One of these
generalizations is the $A$-numerical radius of an operator $T\in {\mathcal{B}%
}_{A^{1/2}}({\mathcal{H}})$, which is defined as
\begin{equation*}
\omega _{A}\left( T\right) :=\sup \left\{ \left\vert \left\langle
Tx,x\right\rangle _{A}\right\vert :x\in {\mathcal{H}}\text{, }\left\Vert
x\right\Vert _{A}=1\right\} \text{.}
\end{equation*}
Obviously, if $A=I$, then $\omega _{A}\left( T\right) =\omega \left( T\right) $.
One basic fact about the $A$-numerical radius is
$\omega _{A}\left( T\right) =\omega _{A}\left( T^{\sharp _{A}}\right) \text{
for every }T\in {\mathcal{B}}_{A}({\mathcal{H}})\text{.}
$  
Further, $A$-numerical radius is a seminorm on ${\mathcal{B}}_{A^{1/2}}({\mathcal{
H}})$ and for every $T\in {\mathcal{B}}_{A^{{1}/{2}}}({\mathcal{H}}%
) $, we have
\begin{equation}
\frac{1}{2}\left\Vert T\right\Vert _{A}\leq \omega _{A}\left( T\right) \leq
\left\Vert T\right\Vert _{A}\text{.}  \label{00}
\end{equation}
If $T\in {\mathcal{B}}_{A}({\mathcal{H}})$ is $A$-self-adjoint, then 
$\omega _{A}\left( T\right) =\left\Vert T\right\Vert _{A}\text{}$. And if $AT^2=0$, then $\frac{1}{2}\left\Vert T\right\Vert _{A}= \omega _{A}\left( T\right).$
For proofs and more facts about the $A$-numerical radius of operators, we refer to \cite{Bhunia-IJPA, Bhunia-book, BH, BH2, GU2, GU3, GU4, Z}.
A refinement of the second inequality in (\ref{00}) was established by
Zamani \cite{Z}, namely, if $T\in {\mathcal{B}}_{A}({
\mathcal{H}})$, then 
\begin{equation}
\omega _{A}^{2}(T)\leq \frac{1}{2}\left\Vert T^{\sharp _{A}}T+TT^{\sharp
_{A}}\right\Vert _{A}\text{.}  \label{2}
\end{equation}
In \cite{Z}, he also proved that 
\begin{equation}
\omega _{A}^{2}(T)\leq \frac{1}{4}\left\Vert T^{\sharp _{A}}T+TT^{\sharp
_{A}}\right\Vert _{A}+\frac{1}{2}\omega _{A}\left( T^{2}\right) \text{.}  \label{4}
\end{equation}
After that Guesba \cite{GU2} generalized (\ref{2}) as
\begin{equation}
\omega _{A}^{2n}(T)\leq \frac{1}{2} \left\Vert \left( T^{\sharp _{A}}T\right)
^{n}+\left( TT^{\sharp _{A}}\right) ^{n} \right\Vert _{A} \quad \text{ for all }n\in 
\mathbb{N}
\text{.}  \label{3}
\end{equation}
He also established an upper bound for the
product of two operators $T,S\in {\mathcal{B}}_{A}({\mathcal{H}
})$ as
\begin{equation}
\omega _{A}^{n}(S^{\sharp _{A}}T)\leq \frac{1}{2} \left\Vert \left( T^{\sharp
_{A}}T\right) ^{n}+\left( S^{\sharp _{A}}S\right) ^{n} \right\Vert _{A} \quad \text{ for
all }n\in 
\mathbb{N}
\text{.}  \label{6}
\end{equation}
In particular, for $n=1$, it follows that
\begin{equation}
\omega _{A}(S^{\sharp _{A}}T)\leq \frac{1}{2} \left\Vert T^{\sharp
_{A}}T+S^{\sharp _{A}}S \right\Vert _{A}\text{.}  \label{7}
\end{equation}

\noindent  For every $n\in 
\mathbb{N}
$, let ${\mathcal{B}}_{A^{{1}/{2}}}({\mathcal{H}})^{n}$ denote the
product of $n$-copies of ${\mathcal{B}}_{A^{{1}/{2}}}({\mathcal{H}})$,
that is 
\begin{equation*}
{\mathcal{B}}_{A^{{1}/{2}}}({\mathcal{H}})^{n}:=\left\{ \mathbf{T=}%
\left( T_{1},\ldots ,T_{n}\right) :T_{1},\ldots,T_{n}\in {\mathcal{B}}_{A^{1/2}}({\mathcal{H}})\right\}.
\end{equation*}
Similarly, let ${\mathcal{B}}_{A}({\mathcal{H}})^{n}$ denote the
product of $n$-copies of ${\mathcal{B}}_{A^{}}({\mathcal{H}})$,
that is 
\begin{equation*}
{\mathcal{B}}_{A}({\mathcal{H}})^{n}:=\left\{ \mathbf{T=}\left(
T_{1},\ldots,T_{n}\right) :T_{1},\ldots,T_{n}\in {\mathcal{B}}_{A}({\mathcal{H}}%
)\right\} \text{.}
\end{equation*}


\noindent For $\mathbf{T}=\left(
T_{1},\cdots ,T_{n}\right)$, $\mathbf{S=}\left( S_{1},\cdots ,S_{n}\right)
\in {\mathcal{B}}_{A^{{1}/{2}}}\left( {\mathcal{H}}\right) ^{n}$, we
denote $\mathbf{T}+\mathbf{S}:=\left( T_{1}+S_{1},\cdots ,T_{n}+S_{n}\right) 
$, $\mathbf{TS}:=\left( T_{1}S_{1},\cdots ,T_{n}S_{n}\right) $ and $\lambda 
\mathbf{T:}=\left( \lambda T_{1},\cdots ,\lambda T_{n}\right) $ for any
scalar $\lambda \in 
\mathbb{C}
$. For $\mathbf{T}=\left( T_{1},\cdots ,T_{n}\right) $ in ${ \mathcal{B}}_{A}({\mathcal{H}})^{n}$, we denote $\mathbf{T}^{\sharp _{A}}=\left( T_{1}^{\sharp _{A}},\cdots ,T_{n}^{\sharp _{A}}\right) $.

\noindent The $A$-joint operator norm of an $n$-tuple $\mathbf{T=}\left(
T_{1},\ldots,T_{n}\right) \in {\mathcal{B}}_{A^{{1}/{2}}}\left( {\mathcal{H}%
}\right) ^{n}$ was defined in \cite{GU1} by 
\begin{equation*}
\left\Vert \mathbf{T}\right\Vert _{A}:=\sup \left\{ \left(
\sum\limits_{k=1}^{n}\left\Vert T_{k}x\right\Vert _{A}^{2}\right) ^{\frac{1}{%
2}}:x\in {\mathcal{H}}\text{, }\left\Vert x\right\Vert _{A}=1\text{ }%
\right\} \text{.}
\end{equation*}
The $A$-Euclidean operator radius of $\mathbf{T=}\left(
T_{1},\cdots ,T_{n}\right) \in {\mathcal{B}}_{A^{{1}/{2}}}\left( {%
\mathcal{H}}\right) ^{n}$ was defined in \cite{BA} by 
\begin{equation*}
\omega _{A}\left( \mathbf{T}\right) :=\sup \left\{ \left(
\sum\limits_{k=1}^{n}\left\vert \left\langle T_{k}x,x\right\rangle
_{A}\right\vert ^{2}\right) ^{{1}/{2}}:x\in {\mathcal{H}}\text{, }%
\left\Vert x\right\Vert _{A}=1\text{ }\right\} \text{.}
\end{equation*}
For the case $n=1$, we get the classical $A$-numerical radius.
We define the $A$-joint Crawford number of $\mathbf{T=}\left( T_{1},\cdots
,T_{n}\right) \in {\mathcal{B}}_{A^{{1}/{2}}}\left( {
\mathcal{H}}\right) ^{n}$ as
\begin{equation*}
c_{A}\left( \mathbf{T}\right) :=\inf \left\{ \left(
\sum\limits_{k=1}^{n}\left\vert \left\langle T_{k}x,x\right\rangle
_{A}\right\vert ^{2}\right) ^{{1}/{2}}:x\in {\mathcal{H}}\text{, }%
\left\Vert x\right\Vert _{A}=1\text{ }\right\} \text{.}
\end{equation*}
In \cite{BH1}, Bhunia et al. introduced a new seminorm (called $A_{\alpha }$-seminorm) on ${\mathcal{B}}_{A^{{1}/{2}}}\left( {\mathcal{H}}\right) $ as:
\begin{equation*}
\left\Vert T\right\Vert _{A_{\alpha }}:=\sup \left\{  \left( \alpha
\left\vert \left\langle Tx,x\right\rangle _{A}\right\vert ^{2}+\left(
1-\alpha \right) \left\Vert Tx\right\Vert _{A}^{2} \right) ^{\frac{1}{%
2}}:x\in {\mathcal{H}}\text{, }\left\Vert x\right\Vert _{A}=1\text{ }%
\right\} \text{,} \quad 
\end{equation*}
where $0\leq \alpha \leq 1$, and studied various properties of this $A_{\alpha }$-seminorm and its applications.

\smallskip

In this paper, we introduce (motivated by the $A_{\alpha }$-seminorm and other seminorms studied in \cite{Bhunia-GMJ, BH1, GU1, GU5, J, MOS, Sain}) a
new seminorm on ${\mathcal{B}}_{A^{{1}/{2}}}\left( {\mathcal{H}}
\right) ^{n}$, which generalizes $A_{\alpha }$-seminorm. 
 In Section \ref{sec2}, we collect a few
lemmas that are required to state and prove the results in the subsequent
sections. In Section \ref{sec3}, we introduce and study basic properties of this
seminorm. As an application of this seminorm, we estimate some upper
and lower bounds for the $A$-Euclidean operator radius. In addition, we improve
some of the important related inequalities \eqref{2}--\eqref{7}.

\section{Auxiliary lemmas}\label{sec2}

Here, we present the following lemmas that will be used to
develop our main results.

\begin{lemma}
\label{L1}\bigskip \cite{SAD} Let $x,y,z\in {\mathcal{H}}$ with $\left\Vert
z\right\Vert _{A}=1$. Then%
\begin{equation*}
\left\vert \left\langle x,z\right\rangle _{A}\left\langle z,y\right\rangle
_{A}\right\vert \leq \frac{1}{2}\left( \left\Vert x\right\Vert
_{A}\left\Vert y\right\Vert _{A}+\left\vert \left\langle x,y\right\rangle
_{A}\right\vert \right) \text{.}
\end{equation*}
\end{lemma}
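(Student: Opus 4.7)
The plan is to reduce this Buzano-type inequality to the $A$-Cauchy--Schwarz inequality via a reflection trick. First, I would use sesquilinearity of $\langle\cdot,\cdot\rangle_A$ in the first slot to rewrite
\[
2\langle x,z\rangle_A\langle z,y\rangle_A-\langle x,y\rangle_A=\bigl\langle 2\langle x,z\rangle_A\, z-x,\ y\bigr\rangle_A.
\]
Setting $w:=2\langle x,z\rangle_A z-x$, which is geometrically the $A$-reflection of $x$ across the $A$-orthogonal complement of $z$, the right-hand side becomes $\langle w,y\rangle_A$, and the problem reduces to controlling $\|w\|_A$.

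Next, a one-line expansion using $\|z\|_A=1$ shows that the cross terms cancel cleanly. Writing $a:=\langle x,z\rangle_A$,
\[
\|w\|_A^2=4|a|^2\|z\|_A^2-2a\,\langle z,x\rangle_A-2\bar a\,\langle x,z\rangle_A+\|x\|_A^2=4|a|^2-4|a|^2+\|x\|_A^2=\|x\|_A^2,
\]
so $\|w\|_A=\|x\|_A$. Applying the $A$-Cauchy--Schwarz inequality to the pair $(w,y)$ then yields
\[
\bigl|2\langle x,z\rangle_A\langle z,y\rangle_A-\langle x,y\rangle_A\bigr|\leq\|w\|_A\,\|y\|_A=\|x\|_A\,\|y\|_A.
\]

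The conclusion is immediate from a single triangle inequality:
\[
2|\langle x,z\rangle_A\langle z,y\rangle_A|\leq\bigl|2\langle x,z\rangle_A\langle z,y\rangle_A-\langle x,y\rangle_A\bigr|+|\langle x,y\rangle_A|\leq\|x\|_A\|y\|_A+|\langle x,y\rangle_A|.
\]
Dividing by $2$ finishes the argument. The only genuine obstacle is guessing the ``right'' auxiliary vector $w=2\langle x,z\rangle_A\, z-x$; once one notices that $w$ is the natural $A$-reflection of $x$ (and hence $A$-isometric to $x$), everything else is routine sesquilinearity, Cauchy--Schwarz, and one triangle inequality step.
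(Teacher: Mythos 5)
Your proof is correct. The paper itself offers no argument for this lemma --- it is quoted directly from the reference [SAD] --- so there is no in-paper proof to compare against; what you have written is the standard Buzano-type reflection argument, transplanted to the semi-inner product $\langle\cdot,\cdot\rangle_A$. All the ingredients you use survive the passage from an inner product to a positive semi-definite sesquilinear form: the Hermitian symmetry $\langle z,x\rangle_A=\overline{\langle x,z\rangle_A}$ (since $A$ is positive, hence self-adjoint) gives the cross-term cancellation $\Vert 2\langle x,z\rangle_A z-x\Vert_A=\Vert x\Vert_A$ when $\Vert z\Vert_A=1$, and the Cauchy--Schwarz inequality holds for any positive semi-definite form, so the estimate $\vert\langle w,y\rangle_A\vert\leq\Vert w\Vert_A\Vert y\Vert_A$ needs no injectivity of $A$. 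The final triangle-inequality step is routine, so the argument is complete and self-contained.
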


\begin{lemma}
\label{L2}\bigskip \cite{CO} Let $T\in {\mathcal{B}}\left( {\mathcal{H}}%
\right) $ be $A$-positive and let $x\in {\mathcal{H}}$ with $%
\left\Vert x\right\Vert _{A}=1$. Then
\end{lemma}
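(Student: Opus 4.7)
The statement of Lemma \ref{L2} is cut off at ``Then'', but in this line of work the lemma cited from \cite{CO} after the mixed Schwarz type inequality in Lemma \ref{L1} is almost invariably the McCarthy/Jensen type power inequality for $A$-positive operators, namely that for every $r\geq 1$ one has
\[
\langle Tx,x\rangle_A^{\,r}\leq \langle T^{r}x,x\rangle_A.
\]
I will write a plan for that statement; if the lemma is instead a closely related variant (e.g.\ the reverse inequality for $0<r\leq 1$, or the Kato/Heinz form $\|Tx\|_A^{2}\leq \langle T^{2}x,x\rangle_A$), the same strategy applies after an obvious adjustment.

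The plan is to push the inequality through the canonical quotient-completion construction so that it becomes a classical Hilbert space fact. First I would observe that $A$-positivity means $AT\geq 0$, and in particular $T$ is $A$-self-adjoint and $\langle Tx,x\rangle_A\geq 0$ for all $x\in\mathcal{H}$. Since any $A$-bounded (in particular any $A$-positive) operator sends $\mathcal{N}(A)$ into itself, $T$ descends to a well-defined operator $\widetilde T$ on the quotient pre-Hilbert space $\mathcal{H}/\mathcal{N}(A)$ endowed with the inner product inherited from $\langle\cdot,\cdot\rangle_A$. Taking the completion $\mathbf{R}(A^{1/2})$, the operator $\widetilde T$ extends to a bounded operator on a genuine Hilbert space, and the relation $\langle Tx,x\rangle_A=\langle \widetilde T\,\widetilde x,\widetilde x\rangle$ together with $A$-positivity makes $\widetilde T$ an ordinary positive operator there.

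Next I would apply the classical Jensen/McCarthy inequality: if $\widetilde T\geq 0$ on a Hilbert space and $\|\widetilde x\|=1$, then
\[
\langle \widetilde T\widetilde x,\widetilde x\rangle^{r}\leq \langle \widetilde T^{\,r}\widetilde x,\widetilde x\rangle\qquad (r\geq 1),
\]
which follows from the spectral theorem applied to $\widetilde T$ and the convexity of $t\mapsto t^r$ on $[0,\infty)$. Transporting this back via $\langle T^{r}x,x\rangle_A=\langle \widetilde T^{\,r}\widetilde x,\widetilde x\rangle$ and $\|x\|_A=\|\widetilde x\|=1$ yields the claim.

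The main obstacle is the reduction step, specifically verifying that $T$ genuinely descends to a bounded positive operator on the quotient completion; this rests on $T(\mathcal{N}(A))\subseteq \mathcal{N}(A)$ (which the excerpt already records for $A$-bounded operators) and on the identity $AT=T^{\sharp_A\ast}A$ to match semi-inner products on both sides. Once this bookkeeping is in place, the inequality is purely an application of Jensen's inequality via the spectral theorem.
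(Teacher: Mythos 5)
You guessed the missing display correctly: the lemma (its inequality appears just after the environment due to a typesetting slip) is $\left\langle Tx,x\right\rangle_A^{n}\leq\left\langle T^{n}x,x\right\rangle_A$ for all $n\in\mathbb{N}$, and the paper gives no proof at all---it is simply imported from \cite{CO}. Your plan is sound and is essentially the standard argument behind the cited result: pass to the quotient $\mathcal{H}/\mathcal{N}(A)$ with the inner product induced by $\left\langle\cdot,\cdot\right\rangle_A$, complete, and apply the classical McCarthy--Jensen inequality to the induced positive operator $\widetilde{T}$. The one step worth spelling out is your parenthetical claim that $A$-positivity already gives $A$-boundedness: since $AT\geq 0$ forces $AT=T^{\ast}A$, Douglas' theorem places $T$ in $\mathcal{B}_A(\mathcal{H})\subseteq\mathcal{B}_{A^{1/2}}(\mathcal{H})$, and this is what justifies $T\left(\mathcal{N}(A)\right)\subseteq\mathcal{N}(A)$, the boundedness of $\widetilde{T}$, and the compatibility $\left\langle T^{n}x,x\right\rangle_A=\langle\widetilde{T}^{\,n}\widetilde{x},\widetilde{x}\rangle$ used to transport the inequality back.
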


\begin{equation*}
\left\langle Tx,x\right\rangle _{A}^{n}\leq \left\langle
T^{n}x,x\right\rangle _{A}\text{ for all }n\in 
\mathbb{N}
\text{.}
\end{equation*}

\begin{lemma}
\label{L4}\bigskip \cite{GU1} Let $\mathbf{T}=\left( T_{1},\cdots
,T_{n}\right) $ $\in {\mathcal{B}}_{A^{{1}/{2}}}\left( {\mathcal{H}}%
\right) ^{n}$ be an $n$-tuple. Then%
\begin{equation*}
\frac{1}{2\sqrt{n}}\left\Vert \mathbf{T}\right\Vert _{A}\leq \omega
_{A}\left( \mathbf{T}\right) \leq \left\Vert \mathbf{T}\right\Vert _{A}\text{%
.}
\end{equation*}
\end{lemma}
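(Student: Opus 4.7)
The plan is to obtain the two inequalities separately; both are fairly direct consequences of the corresponding one-operator facts already listed in the introduction.

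For the upper bound, I would fix $x\in\mathcal{H}$ with $\|x\|_A=1$ and apply the Cauchy–Schwarz inequality for the semi-inner product $\langle\cdot,\cdot\rangle_A$ to each coordinate: $|\langle T_k x,x\rangle_A|\le \|T_k x\|_A\|x\|_A=\|T_k x\|_A$. Squaring, summing over $k$, and then taking the supremum over admissible $x$ immediately yields
\begin{equation*}
\omega_A(\mathbf{T})^2\;\le\;\sup_{\|x\|_A=1}\sum_{k=1}^{n}\|T_k x\|_A^{2}\;=\;\|\mathbf{T}\|_A^{2}.
\end{equation*}

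For the lower bound, the main observation is a coordinate-wise comparison. For any fixed index $j$, restricting the sum in the definition of $\omega_A(\mathbf{T})$ to the single term $k=j$ gives
\begin{equation*}
\omega_A(\mathbf{T})^2\;\ge\;\sup_{\|x\|_A=1}|\langle T_j x,x\rangle_A|^{2}\;=\;\omega_A(T_j)^{2},
\end{equation*}
so $\omega_A(\mathbf{T})\ge \max_{1\le j\le n}\omega_A(T_j)$. Combining this with the single-operator inequality $\tfrac12\|T_j\|_A\le\omega_A(T_j)$ from \eqref{00}, I get $\max_j\|T_j\|_A\le 2\,\omega_A(\mathbf{T})$.

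It remains to control $\|\mathbf{T}\|_A$ by $\max_j\|T_j\|_A$. For $\|x\|_A=1$ we have $\|T_k x\|_A^2\le \|T_k\|_A^2\le \max_j\|T_j\|_A^2$, hence
\begin{equation*}
\|\mathbf{T}\|_A^{2}\;=\;\sup_{\|x\|_A=1}\sum_{k=1}^{n}\|T_k x\|_A^{2}\;\le\;n\max_{1\le j\le n}\|T_j\|_A^{2}.
\end{equation*}
Taking square roots and chaining the two estimates yields $\|\mathbf{T}\|_A\le \sqrt{n}\max_j\|T_j\|_A\le 2\sqrt{n}\,\omega_A(\mathbf{T})$, which is the desired lower bound. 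There is no genuine obstacle here; the only point to handle carefully is to justify that each $\omega_A(T_j)$ bound can be extracted from the $n$-tuple supremum by dropping the other nonnegative summands, which is immediate since all terms in the sum are nonnegative.
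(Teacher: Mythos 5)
Your argument is correct. Note that the paper itself gives no proof of this lemma: it is quoted verbatim from \cite{GU1}, so there is no in-paper argument to compare against. Your derivation is the natural self-contained one: the upper bound $\omega_A(\mathbf{T})\le\|\mathbf{T}\|_A$ follows from the $A$-Cauchy--Schwarz inequality applied coordinatewise, and the lower bound follows by reducing to the single-operator inequality $\tfrac12\|T_j\|_A\le\omega_A(T_j)$ from \eqref{00} (legitimate, since each $T_j\in\mathcal{B}_{A^{1/2}}(\mathcal{H})$ and dropping the remaining nonnegative summands gives $\omega_A(\mathbf{T})\ge\omega_A(T_j)$), combined with the crude estimate $\|\mathbf{T}\|_A^2\le\sum_k\|T_k\|_A^2\le n\max_j\|T_j\|_A^2$, which uses the $A$-boundedness inequality $\|T_kx\|_A\le\|T_k\|_A\|x\|_A$ stated in the introduction. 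The constants chain correctly to give $\|\mathbf{T}\|_A\le\sqrt{n}\max_j\|T_j\|_A\le 2\sqrt{n}\,\omega_A(\mathbf{T})$, exactly the stated bound, so the proposal is a complete and valid proof of the cited lemma.
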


Recently, Baklouti et al. \cite{BA2} have introduced the class of joint $A$-normality of operators on semi-Hilbertian spaces as follows:

\begin{definition}
An $n$-tuple $\mathbf{T=}\left( T_{1},...,T_{n}\right) \in {\mathcal{B}}%
_{A}\left( {\mathcal{H}}\right) ^{n}$ is said to be an $A$-normal $n$-tuple if $A\left[ T_{i},T_{j}\right] =0$ for all $i,j=
1,...,n $ and $T_{k}^{\sharp_A}T_{k}=T_{k}T_{k}^{\sharp_A}$ \ for all $k= 1,...,n $, where $\left[ T_{i},T_{j}\right]
=T_{i}T_{j}-T_{j}T_{i}$.
\end{definition}

We also recall here the commuting $n$-tuple operators.

\begin{definition}
Let $\mathbf{T=}\left( T_{1},...,T_{n}\right) \in {\mathcal{B}}\left( {
\mathcal{H}}\right) ^{n}$ be an $n$-tuple. Then $\mathbf{T}$ is said
to be commuting if $T_{i}T_{j}=T_{j}T_{i}$ for all $i,j=1,...,n$.
\end{definition}

\begin{lemma}
\label{L004}\cite{BA2} Let $\mathbf{T=}\left( T_{1},...,T_{n}\right) \in {%
\mathcal{B}}_{A^{{1}/{2}}}\left( {\mathcal{H}}\right) ^{n}$ be commuting and $A$-normal $n$-tuple. Then 
\begin{equation*}
\left\Vert \mathbf{T}\right\Vert _{A}=\omega _{A}\left( \mathbf{T}\right) 
\text{.}
\end{equation*}
\end{lemma}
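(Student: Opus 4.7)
The upper estimate $\omega_A(\mathbf{T}) \leq \|\mathbf{T}\|_A$ is already in Lemma~\ref{L4}, so the only thing to prove is the reverse inequality $\|\mathbf{T}\|_A \leq \omega_A(\mathbf{T})$. My plan is to transport the problem to a genuine Hilbert space via the standard semi-Hilbertian quotient/completion construction, and then apply the classical result that joint operator norm equals joint numerical radius for commuting normal tuples.

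Concretely, I would form the Hilbert space $\mathcal{H}_A$ obtained by taking $\mathcal{H}/\mathcal{N}(A)$ equipped with the inner product induced by $\langle\cdot,\cdot\rangle_A$ and completing. Every $S \in \mathcal{B}_{A^{1/2}}(\mathcal{H})$ descends (using $S(\mathcal{N}(A))\subseteq \mathcal{N}(A)$) to a bounded operator $\widetilde{S}$ on $\mathcal{H}_A$ satisfying $\|\widetilde{S}\| = \|S\|_A$, and for $S \in \mathcal{B}_A(\mathcal{H})$ the $A$-adjoint satisfies $\widetilde{S^{\sharp_A}} = \widetilde{S}^{\ast}$. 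The assumption $A[T_i,T_j]=0$ is exactly what is needed to guarantee that $\widetilde{T}_i\widetilde{T}_j = \widetilde{T}_j\widetilde{T}_i$ in $\mathcal{H}_A$, and $T_k^{\sharp_A}T_k = T_kT_k^{\sharp_A}$ descends to $\widetilde{T}_k^{\ast}\widetilde{T}_k = \widetilde{T}_k\widetilde{T}_k^{\ast}$. Hence $(\widetilde{T}_1,\ldots,\widetilde{T}_n)$ is a commuting tuple of normal operators on $\mathcal{H}_A$, and a density argument identifies $\|\mathbf{T}\|_A$ with the joint operator norm and $\omega_A(\mathbf{T})$ with the joint numerical radius of this tuple.

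For the reduced Hilbert-space problem, I would apply the joint spectral theorem to obtain a projection-valued measure $E$ on the joint spectrum $K \subseteq \mathbb{C}^n$ such that $\widetilde{T}_k = \int z_k\,dE(z)$. Functional calculus then gives
\[
\|(\widetilde{T}_1,\ldots,\widetilde{T}_n)\|^2 \;=\; \Bigl\|\sum_{k=1}^{n}\widetilde{T}_k^{\ast}\widetilde{T}_k\Bigr\| \;=\; \sup_{\zeta\in K}\sum_{k=1}^{n}|\zeta_k|^2.
\]
Picking a maximizing point $\zeta^{\ast}\in K$ and a sequence of unit vectors whose scalar spectral measures concentrate near $\zeta^{\ast}$ produces unit vectors along which $\langle \widetilde{T}_k x,x\rangle \to \zeta_k^{\ast}$, so $\sum_k|\langle \widetilde{T}_k x,x\rangle|^2$ converges to the same supremum. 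This yields the missing inequality.

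The genuinely delicate step is the passage to $\mathcal{H}_A$: one must verify that joint $A$-normality together with the commutator condition $A[T_i,T_j]=0$ translates, after passing to the quotient, into honest commuting normality; and one must confirm that the defining suprema (taken over $x\in\mathcal{H}$ with $\|x\|_A=1$) are not strictly smaller than the corresponding suprema over unit vectors in $\mathcal{H}_A$, which is where density of the image of $\mathcal{H}$ in $\mathcal{H}_A$ is used. Once these identifications are made, the spectral-theoretic computation above closes the argument.
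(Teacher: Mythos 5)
The paper does not prove this lemma at all: it is imported verbatim from the cited reference \cite{BA2}, so there is no in-paper argument to compare yours against. Your proposal is a correct, self-contained derivation, and it follows the route that is standard in this literature (and essentially the one underlying the cited source): transport everything to the genuine Hilbert space attached to the seminorm, where the map $T\mapsto\widetilde T$ is multiplicative, satisfies $\|\widetilde T\|=\|T\|_A$, and intertwines $\sharp_A$ with the honest adjoint, so that $A[T_i,T_j]=0$ and $T_k^{\sharp_A}T_k=T_kT_k^{\sharp_A}$ become commutativity and normality of $(\widetilde T_1,\dots,\widetilde T_n)$; then the joint spectral theorem gives $\|\widetilde{\mathbf T}\|^2=\sup_{\zeta\in K}\sum_k|\zeta_k|^2$ and unit vectors in the range of $E\bigl(B_\varepsilon(\zeta^\ast)\bigr)$ force $\omega(\widetilde{\mathbf T})\geq\|\widetilde{\mathbf T}\|$, the reverse inequality being Cauchy--Schwarz (Lemma \ref{L4}). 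Two small points you should make explicit if you write this up: (i) the hypothesis of $A$-normality already forces $T_k\in{\mathcal B}_A({\mathcal H})$ (so $T_k^{\sharp_A}$ exists and the identification $\widetilde{T_k^{\sharp_A}}=\widetilde T_k^{\,\ast}$ is legitimate), even though the lemma is stated for ${\mathcal B}_{A^{1/2}}({\mathcal H})^n$; and (ii) to apply the joint spectral theorem you need the commuting normal operators $\widetilde T_k$ to generate a commutative $C^{\ast}$-algebra, which requires an appeal to the Fuglede--Putnam theorem so that each $\widetilde T_i$ commutes with $\widetilde T_j^{\,\ast}$. The density argument you flag as delicate is handled exactly as you say: the quadratic functionals involved are continuous and unit vectors of the image of ${\mathcal H}$ are dense in the unit sphere of the completion, so the suprema defining $\|\mathbf T\|_A$ and $\omega_A(\mathbf T)$ coincide with the corresponding quantities for $\widetilde{\mathbf T}$. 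With these details filled in, your argument is complete.
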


\begin{lemma}
\label{L05}\bigskip \cite[Proposition 2.9]{GU1} Let $\ \mathbf{T=}\left(
T_{1},...,T_{n}\right) \in {\mathcal{B}}_{A}\left( {\mathcal{H}}\right) ^{n}$
be an $n$-tuple. Then
\begin{equation*}
\left\Vert \mathbf{T}\right\Vert _{A}=\left\Vert
\sum\limits_{k=1}^{n}T_{k}^{\sharp_A}T_{k}\right\Vert _{A}^{{1}/{2}}\text{.}
\end{equation*}
\end{lemma}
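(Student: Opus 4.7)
The plan is to reduce the definition of $\|\mathbf{T}\|_A$ to a single numerical radius computation on the $A$-positive operator $S := \sum_{k=1}^{n} T_k^{\sharp_A} T_k$, which belongs to $\mathcal{B}_A(\mathcal{H})$ since each summand does and $\mathcal{B}_A(\mathcal{H})$ is a subalgebra.

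First I would unwind the definition by squaring: $\|\mathbf{T}\|_A^2 = \sup_{\|x\|_A = 1} \sum_{k=1}^n \|T_k x\|_A^2$. The key observation is that for each $k$, the identity $A T_k^{\sharp_A} = T_k^* A$ gives
\begin{equation*}
\|T_k x\|_A^2 = \langle A T_k x, T_k x\rangle = \langle T_k^* A T_k x, x\rangle = \langle A T_k^{\sharp_A} T_k x, x\rangle = \langle T_k^{\sharp_A} T_k x, x\rangle_A.
\end{equation*}
Summing over $k$ yields $\sum_{k=1}^n \|T_k x\|_A^2 = \langle S x, x\rangle_A$.

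Next I would invoke two facts already recorded in the introduction. The bullet list tells us that each $T_k^{\sharp_A} T_k$ is $A$-self-adjoint and $A$-positive, so $S$ inherits both properties (since the cone of $A$-positive operators and the real subspace of $A$-self-adjoint ones are closed under finite sums). Because $S$ is $A$-self-adjoint, $\omega_A(S) = \|S\|_A$; because $S$ is $A$-positive, $\langle S x, x\rangle_A \geq 0$ for every $x$, so the supremum of $\langle S x, x\rangle_A$ over the unit $A$-sphere coincides with the supremum of $|\langle S x, x\rangle_A|$, which is exactly $\omega_A(S)$.

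Putting these together,
\begin{equation*}
\|\mathbf{T}\|_A^2 = \sup_{\|x\|_A = 1} \langle S x, x\rangle_A = \omega_A(S) = \|S\|_A,
\end{equation*}
and taking square roots gives the claimed identity. There is no real obstacle here; the only point requiring care is ensuring the passage $\|T_k x\|_A^2 = \langle T_k^{\sharp_A} T_k x, x\rangle_A$ is justified, which uses precisely the hypothesis $T_k \in \mathcal{B}_A(\mathcal{H})$ so that $T_k^{\sharp_A}$ exists and satisfies $AT_k^{\sharp_A} = T_k^* A$.
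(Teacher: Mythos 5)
Your proof is correct: the identity $\|T_kx\|_A^2=\langle T_k^{\sharp_A}T_kx,x\rangle_A$, the observation that $S=\sum_k T_k^{\sharp_A}T_k$ is $A$-self-adjoint and $A$-positive, and the fact $\omega_A(S)=\|S\|_A$ together give exactly the claimed equality. Note that the paper itself offers no proof of this lemma, quoting it from \cite[Proposition 2.9]{GU1}; your argument is the standard one and is essentially the proof given in that reference, so there is nothing to add beyond the care you already took in justifying $\sup_{\|x\|_A=1}\langle Sx,x\rangle_A=\omega_A(S)=\|S\|_A$ via the $A$-positivity and $A$-self-adjointness of $S$.
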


\begin{lemma}
\label{L5}\bigskip Let $\mathbf{T}=\left( T_{1},\cdots ,T_{n}\right) ,\, 
\mathbf{S}=\left( S_{1},\cdots ,S_{n}\right) $ $\in {\mathcal{B}}_{A^{1/2}}\left( {\mathcal{H}}\right) ^{n}$ be two $n$-tuples. Then
\begin{equation*}
\left\Vert \mathbf{TS}\right\Vert _{A}\leq \left\Vert \mathbf{T}\right\Vert
_{A}\left\Vert \mathbf{S}\right\Vert _{A}\text{.}
\end{equation*}
\end{lemma}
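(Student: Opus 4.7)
The plan is to reduce the joint inequality to the already-stated single-operator submultiplicativity $\|TS\|_A\le \|T\|_A\|S\|_A$ (for $T,S\in\mathcal{B}_{A^{1/2}}(\mathcal{H})$), combined with the elementary fact that each component of an $n$-tuple is dominated in $A$-seminorm by the whole tuple.

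First I would fix $x\in\mathcal{H}$ with $\|x\|_A=1$ and write
\begin{equation*}
\|\mathbf{TS}\|_A^2 \;=\; \sup_{\|x\|_A=1}\sum_{k=1}^n \|T_kS_kx\|_A^2.
\end{equation*}
For each index $k$, I would apply the submultiplicativity of $\|\cdot\|_A$ for single operators to get $\|T_kS_kx\|_A\le \|T_k\|_A\|S_kx\|_A$, and hence
\begin{equation*}
\sum_{k=1}^n \|T_kS_kx\|_A^2 \;\le\; \sum_{k=1}^n \|T_k\|_A^{2}\,\|S_kx\|_A^2.
\end{equation*}

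The key observation is then that $\|T_k\|_A\le \|\mathbf{T}\|_A$ for every $k$: indeed, for any $y$ with $\|y\|_A=1$, one has $\|T_ky\|_A^2\le \sum_{j=1}^n\|T_jy\|_A^2\le \|\mathbf{T}\|_A^2$, so taking the supremum over $y$ gives $\|T_k\|_A\le \|\mathbf{T}\|_A$. Plugging this in,
\begin{equation*}
\sum_{k=1}^n \|T_kS_kx\|_A^2 \;\le\; \|\mathbf{T}\|_A^2\sum_{k=1}^n \|S_kx\|_A^2 \;\le\; \|\mathbf{T}\|_A^2\,\|\mathbf{S}\|_A^2,
\end{equation*}
since $\sum_k\|S_kx\|_A^2\le \|\mathbf{S}\|_A^2\|x\|_A^2=\|\mathbf{S}\|_A^2$ by definition of the joint norm. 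Taking the supremum over unit $A$-vectors $x$ and extracting square roots yields $\|\mathbf{TS}\|_A\le \|\mathbf{T}\|_A\|\mathbf{S}\|_A$.

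I do not foresee a real obstacle here: the proof is essentially a bookkeeping exercise, and the only non-trivial input (submultiplicativity of the seminorm $\|\cdot\|_A$ on $\mathcal{B}_{A^{1/2}}(\mathcal{H})$) is already recorded in the introduction. A minor point worth being explicit about is that $T_kS_k\in\mathcal{B}_{A^{1/2}}(\mathcal{H})$, which is ensured by the same submultiplicativity.
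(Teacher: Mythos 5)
Your proposal is correct and follows essentially the same route as the paper's own proof: bound each component by the joint norm via $\|T_k\|_A\le\|\mathbf{T}\|_A$, use $\|T_kS_kx\|_A\le\|T_k\|_A\|S_kx\|_A$ (the $A$-boundedness inequality recorded in the introduction), sum, and take the supremum over $A$-unit vectors. No gaps to flag.
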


\begin{proof}
Let $x\in {\mathcal{H}}$ with $\left\Vert x\right\Vert _{A}=1$. For
all $k=1,2,...,n$, we see that
\begin{equation*}
\left\Vert T_{k}x\right\Vert _{A}\leq \left( \sum\limits_{k=1}^{n}\left\Vert
T_{k}x\right\Vert _{A}^{2}\right) ^{{1}/{2}}\text{.}
\end{equation*}
Taking the supremum over $x\in {\mathcal{H}}$ with $\left\Vert x\right\Vert
_{A}=1$, we get
$\left\Vert T_{k}\right\Vert _{A}\leq \left\Vert \mathbf{T}\right\Vert _{A}%
\text{ for all }k=1,2,...,n\text{.}$
Therefore, we have
\begin{eqnarray*}
\left\Vert \mathbf{TS}\right\Vert _{A} &=&\sup\limits_{\left\Vert
x\right\Vert _{A}=1}\left( \sum\limits_{k=1}^{n}\left\Vert
T_{k}S_{k}x\right\Vert _{A}^{2}\right) ^{{1}/{2}} 
\leq \sup\limits_{\left\Vert x\right\Vert _{A}=1}\left(
\sum\limits_{k=1}^{n}\left\Vert T_{k}\right\Vert _{A}^{2}\left\Vert
S_{k}x\right\Vert _{A}^{2}\right) ^{{1}/{2}} \\
&\leq &\sup\limits_{\left\Vert x\right\Vert _{A}=1}\left(
\sum\limits_{k=1}^{n}\left\Vert \mathbf{T}\right\Vert _{A}^{2}\left\Vert
S_{k}x\right\Vert _{A}^{2}\right) ^{{1}/{2}} 
=\left\Vert \mathbf{T}\right\Vert _{A}\sup\limits_{\left\Vert x\right\Vert
_{A}=1}\left( \sum\limits_{k=1}^{n}\left\Vert S_{k}x\right\Vert
_{A}^{2}\right) ^{{1}/{2}} \\
&=&\left\Vert \mathbf{T}\right\Vert _{A}\left\Vert \mathbf{S}\right\Vert _{A}%
\text{.}
\end{eqnarray*}
\end{proof}

\begin{lemma}
\label{L6}\bigskip\ Let $\mathbf{T}=\left( T_{1},\cdots ,T_{n}\right) ,\,
\mathbf{S}=\left( S_{1},\cdots ,S_{n}\right) $ $\in {\mathcal{B}}_{A^{1/2}}\left( {\mathcal{H}}\right) ^{n}$  be two $n$-tuples. Then 
\begin{equation*}
\omega _{A}\left( \mathbf{TS}\right) \leq 4n\omega _{A}\left( \mathbf{T}%
\right) \omega _{A}\left( \mathbf{S}\right) \text{.}
\end{equation*}
\end{lemma}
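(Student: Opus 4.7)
The plan is to chain together Lemma~\ref{L4} (both the upper inequality $\omega_A(\mathbf{T})\le\|\mathbf{T}\|_A$ and the lower inequality $\|\mathbf{T}\|_A\le 2\sqrt{n}\,\omega_A(\mathbf{T})$) with the submultiplicativity provided by Lemma~\ref{L5}. In other words, convert the problem about the $A$-Euclidean operator radius into a problem about the $A$-joint operator norm, use submultiplicativity there, and then convert back.

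First I would note that, since $\omega_A$ is dominated by the $A$-joint operator norm (Lemma~\ref{L4}), we have
\begin{equation*}
\omega_A(\mathbf{TS})\le \|\mathbf{TS}\|_A.
\end{equation*}
Next, applying Lemma~\ref{L5} yields $\|\mathbf{TS}\|_A\le \|\mathbf{T}\|_A\|\mathbf{S}\|_A$. Finally, the other half of Lemma~\ref{L4} gives $\|\mathbf{T}\|_A\le 2\sqrt{n}\,\omega_A(\mathbf{T})$ and $\|\mathbf{S}\|_A\le 2\sqrt{n}\,\omega_A(\mathbf{S})$, and multiplying produces exactly the factor $4n$ claimed in the statement.

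There is essentially no obstacle: the constant $4n$ is the precise output of pushing the Euclidean radius through the norm twice, picking up a factor of $2\sqrt{n}$ on each side. Were one to work coordinatewise instead (bounding $|\langle T_kS_kx,x\rangle_A|\le \|T_k\|_A\|S_k\|_A\le 4\omega_A(T_k)\omega_A(S_k)$ using the scalar estimate $\|T\|_A\le 2\omega_A(T)$ together with $\omega_A(T_k)\le \omega_A(\mathbf{T})$), one would obtain the sharper constant $4\sqrt{n}$; however, the straightforward two-step reduction through Lemmas~\ref{L4} and~\ref{L5} matches the stated bound $4n$ and is the simplest path to write down.
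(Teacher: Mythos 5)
Your proof is correct and is exactly the paper's argument: chain $\omega_A(\mathbf{TS})\le\|\mathbf{TS}\|_A$ (Lemma \ref{L4}) with submultiplicativity of the $A$-joint norm (Lemma \ref{L5}) and then $\|\mathbf{T}\|_A\le 2\sqrt{n}\,\omega_A(\mathbf{T})$, $\|\mathbf{S}\|_A\le 2\sqrt{n}\,\omega_A(\mathbf{S})$ to obtain the factor $4n$. Your aside about the coordinatewise variant yielding $4\sqrt{n}$ is a reasonable observation but not needed for the stated bound.
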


\begin{proof}
Following Lemmas \ref{L5} and \ref{L4}, we have 
\begin{equation*}
\omega _{A}\left( \mathbf{TS}\right) \leq \left\Vert \mathbf{TS}\right\Vert
_{A}\leq \left\Vert \mathbf{T}\right\Vert _{A}\left\Vert \mathbf{S}%
\right\Vert _{A}\leq 4n\omega _{A}\left( \mathbf{T}\right) \omega _{A}\left( 
\mathbf{S}\right) \text{.}
\end{equation*}
\end{proof}

 The proof of the following lemma can be followed similar to \cite[
Theorem 2.10]{J}. 

\begin{lemma}
\label{L66}\bigskip\ Let $\mathbf{T}=\left( T_{1},\cdots ,T_{n}\right) ,\,
\mathbf{S}=\left( S_{1},\cdots ,S_{n}\right) $ $\in {\mathcal{B}}_{A^{1/2}}\left( {\mathcal{H}}\right) ^{n}$. If $\mathbf{TS=ST}$, then 
\begin{equation*}
\omega _{A}\left( \mathbf{TS}\right) \leq 2\sqrt{n}\omega _{A}\left( \mathbf{%
T}\right) \omega _{A}\left( \mathbf{S}\right) \text{.}
\end{equation*}
\end{lemma}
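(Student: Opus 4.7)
The plan is to reduce the $n$-tuple inequality to a componentwise single-operator inequality, losing only a factor of $\sqrt n$ in the process. The improvement over Lemma \ref{L6} comes from exploiting commutativity at the scalar level via the $A$-numerical radius analogue of the classical Fong--Holbrook inequality for commuting operators, rather than using the crude $A$-joint operator norm bound.

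First, I would unwind the hypothesis $\mathbf{TS}=\mathbf{ST}$: by the componentwise definition of the product of $n$-tuples recalled in the introduction, this is equivalent to $T_kS_k=S_kT_k$ for every $k=1,\ldots,n$. Second, I would invoke the single-operator commuting bound $\omega_A(T_kS_k)\le 2\,\omega_A(T_k)\,\omega_A(S_k)$, which is the $A$-numerical radius version of Fong--Holbrook and is precisely the content implicit in the hint \cite[Theorem 2.10]{J}. Third, I would use the trivial pointwise comparison $|\langle T_kx,x\rangle_A|^2\le\sum_{j=1}^n|\langle T_jx,x\rangle_A|^2$, which, on taking suprema, yields $\omega_A(T_k)\le\omega_A(\mathbf{T})$ and similarly $\omega_A(S_k)\le\omega_A(\mathbf{S})$ for each $k$.

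With these three ingredients, for any $x\in\mathcal{H}$ with $\|x\|_A=1$, I would estimate
\begin{align*}
\sum_{k=1}^{n}\left|\langle T_kS_kx,x\rangle_A\right|^2
&\le \sum_{k=1}^{n}\omega_A(T_kS_k)^2
\le 4\sum_{k=1}^{n}\omega_A(T_k)^2\,\omega_A(S_k)^2\\
&\le 4n\,\omega_A(\mathbf{T})^2\,\omega_A(\mathbf{S})^2.
\end{align*}
Taking the supremum over all such $x$ and extracting the square root gives $\omega_A(\mathbf{TS})\le 2\sqrt{n}\,\omega_A(\mathbf{T})\,\omega_A(\mathbf{S})$, as required.

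The main obstacle is the single-operator commuting inequality $\omega_A(T_kS_k)\le 2\omega_A(T_k)\omega_A(S_k)$: it is what costs nontrivial work, and it is precisely the step the authors outsource by pointing to \cite[Theorem 2.10]{J}. Everything else in the argument is a routine reduction, but the structure is sharp in the following sense: the two weakenings $\omega_A(T_k)\le\omega_A(\mathbf T)$ and the $\sum_{k=1}^n 1 = n$ together contribute exactly the $\sqrt n$ factor, while the commuting bound contributes the constant $2$, yielding the claimed $2\sqrt n$.
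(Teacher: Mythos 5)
Your argument is correct and is essentially the route the paper intends: the paper gives no proof of this lemma at all, only the pointer to \cite[Theorem 2.10]{J}, and the natural adaptation is exactly your reduction --- componentwise commutativity, the single-operator commuting bound $\omega_A(T_kS_k)\le 2\,\omega_A(T_k)\,\omega_A(S_k)$, the trivial dominations $\omega_A(T_k)\le\omega_A(\mathbf{T})$, $\omega_A(S_k)\le\omega_A(\mathbf{S})$, and summation over $k$ to produce the factor $\sqrt{n}$. The only point worth making explicit is that the single-operator commuting bound is not literally the statement of \cite[Theorem 2.10]{J} (which concerns the case $A=I$); in the semi-Hilbertian setting it follows from $T_kS_k=\frac{1}{4}\left[(T_k+S_k)^2-(T_k-S_k)^2\right]$ together with the seminorm properties of $\omega_A$ and the $A$-power inequality $\omega_A(X^2)\le\omega_A^2(X)$, or equivalently from the standard transfer $\omega_A(X)=\omega(\tilde{X})$ to the induced operator on $\overline{\mathcal{R}(A^{1/2})}$.
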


\begin{lemma}
\label{L7}\bigskip Let $\mathbf{T}=\left( T_{1},\cdots ,T_{n}\right) \in {%
\mathcal{B}}_{A}\left( {\mathcal{H}}\right) ^{n}$ and $\mathbf{S}=\left(
S_{1},\cdots ,S_{n}\right) $ $\in {\mathcal{B}}_{A^{{1}/{2}}}\left( {%
\mathcal{H}}\right) ^{n}$. If $T_{k}$ is an $A$-isometry (that is, $
T_{k}^{\sharp _{A}}T_{k}=P$ for every $k=1,...,n)$, then
\end{lemma}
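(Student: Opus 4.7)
The plan is to exploit the single fact that an $A$-isometry $T_k$ preserves the $A$-seminorm, i.e., $\|T_k x\|_A = \|x\|_A$ for every $x \in \mathcal{H}$, and then push this through the sum defining the joint $A$-norm. Reading $P$ as the orthogonal projection $P_{\overline{\mathcal{R}(A)}}$, I would first establish this pointwise identity by the short computation
\begin{equation*}
\|T_k x\|_A^2 = \langle A T_k x, T_k x\rangle = \langle T_k^{\ast} A T_k x, x\rangle = \langle A T_k^{\sharp_A} T_k x, x\rangle = \langle A P x, x\rangle = \langle A x, x\rangle = \|x\|_A^2,
\end{equation*}
where I use $A T_k^{\sharp_A} = T_k^{\ast} A$ (the defining relation for $T_k^{\sharp_A}$) together with the identity $AP = A$, which holds because $P$ is the identity on $\overline{\mathcal{R}(A)}$ and $A$ already annihilates $\mathcal{N}(A) = \overline{\mathcal{R}(A)}^{\perp}$.

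Next, I would apply this identity coordinatewise to the product $\mathbf{T}\mathbf{S} = (T_1 S_1, \ldots, T_n S_n)$. For each $k$ and each $x$ with $\|x\|_A = 1$, the preceding step (applied with $x$ replaced by $S_k x$) gives $\|T_k S_k x\|_A = \|S_k x\|_A$. Squaring, summing over $k$, taking square roots, and then taking the supremum over unit $A$-norm vectors yields
\begin{equation*}
\|\mathbf{T}\mathbf{S}\|_A = \sup_{\|x\|_A = 1} \left(\sum_{k=1}^{n} \|T_k S_k x\|_A^2 \right)^{1/2} = \sup_{\|x\|_A = 1} \left(\sum_{k=1}^{n} \|S_k x\|_A^2 \right)^{1/2} = \|\mathbf{S}\|_A,
\end{equation*}
which is the expected conclusion (an analogous estimate for $\omega_A(\mathbf{T}\mathbf{S})$ would follow from the same pointwise preservation combined with Cauchy--Schwarz, giving $\omega_A(\mathbf{T}\mathbf{S}) \le \|\mathbf{S}\|_A$).

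There is essentially no obstacle in the main line of argument; the only subtle point is the bookkeeping at the level of $P$, namely verifying that $\langle APx,x\rangle = \langle Ax,x\rangle$ for \emph{every} $x \in \mathcal{H}$ (not merely for $x \in \overline{\mathcal{R}(A)}$). This is what lets the isometry condition $T_k^{\sharp_A} T_k = P$ act as a genuine $A$-norm preservation on all of $\mathcal{H}$ rather than only on the closed range. Once that is clarified, the result follows directly from the definition of $\|\cdot\|_A$ on $n$-tuples and does not require any of the deeper lemmas (Buzano, Cordes, etc.) stated earlier in Section~\ref{sec2}.
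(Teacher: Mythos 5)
Your proposal is correct and follows essentially the same route as the paper: the paper's proof also expands $\left\Vert T_{k}S_{k}x\right\Vert _{A}^{2}=\left\langle T_{k}^{\sharp _{A}}T_{k}S_{k}x,S_{k}x\right\rangle _{A}$ and uses $T_{k}^{\sharp _{A}}T_{k}=P$ together with the $A$-Cauchy--Schwarz inequality, which is exactly your computation packaged as the pointwise identity $\left\Vert T_{k}y\right\Vert _{A}=\left\Vert y\right\Vert _{A}$. The only (harmless) difference is that your observation $AP=A$ yields the equality $\left\Vert \mathbf{TS}\right\Vert _{A}=\left\Vert \mathbf{S}\right\Vert _{A}$, slightly sharper than the stated inequalities, both of which then follow at once.
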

\begin{enumerate}
  
\item  $\omega _{A}\left( \mathbf{TS}\right) \leq \left\Vert \mathbf{S}%
\right\Vert _{A}$.

\item  $\left\Vert \mathbf{TS}\right\Vert _{A}\leq \left\Vert \mathbf{S}%
\right\Vert _{A}$.
  
\end{enumerate}

\begin{proof}
(1) By using the $A$-Cauchy--Schwarz inequality, we get%
\begin{eqnarray*}
\omega _{A}\left( \mathbf{TS}\right) &=&\sup_{\left\Vert x\right\Vert
_{A}=1}\left( \sum\limits_{k=1}^{n}\left\vert \left\langle
T_{k}S_{k}x,x\right\rangle _{A}\right\vert ^{2}\right) ^{{1}/{2}}\text{ }
\leq \sup_{\left\Vert x\right\Vert _{A}=1}\left(
\sum\limits_{k=1}^{n}\left\Vert T_{k}S_{k}x\right\Vert _{A}^{2}\right) ^{%
\frac{1}{2}}\text{ } \\
&=&\sup_{\left\Vert x\right\Vert _{A}=1}\left(
\sum\limits_{k=1}^{n}\left\langle T_{k}S_{k}x,T_{k}S_{k}x\right\rangle
_{A}\right) ^{{1}/{2}}\text{ } 
=\sup_{\left\Vert x\right\Vert _{A}=1}\left(
\sum\limits_{k=1}^{n}\left\langle T_{k}^{\sharp
_{A}}T_{k}S_{k}x,S_{k}x\right\rangle _{A}\right) ^{{1}/{2}} \\
&\leq&\text{ }\sup_{\left\Vert x\right\Vert _{A}=1}\left(
\sum\limits_{k=1}^{n}\left\langle S_{k}x,S_{k}x\right\rangle _{A}\right) ^{%
\frac{1}{2}} 
=\sup_{\left\Vert x\right\Vert _{A}=1}\left(
\sum\limits_{k=1}^{n}\left\Vert S_{k}x\right\Vert _{A}^{2}\right) ^{\frac{1}{%
2}} \\
&=&\left\Vert \mathbf{S}\right\Vert _{A}\text{.}
\end{eqnarray*}

(2) Similarly, the proof follows from the definition of $A$-joint operator
seminorm for $\mathbf{TS}$.
\end{proof}

\section{Main results}\label{sec3}

Here, we study our main results. We begin by introducing a new seminorm on ${\mathcal{B}}_{A^{{1}/{2}}}\left( {
\mathcal{H}}\right) ^{n}$.
\begin{definition}
    Let $\mathbf{T}=\left( T_{1},\cdots ,T_{n}\right) \in {\mathcal{B}}_{A^{
{1}/{2}}}\left( {\mathcal{H}}\right) ^{n}$ be an $n$-tuple
and let $\alpha ,\beta \geq 0$ such that $\left( \alpha ,\beta \right) \neq
\left( 0,0\right) $. Consider a mapping $\left\Vert \mathbf{\cdot }%
\right\Vert _{A_{\alpha ,\beta }}:{\mathcal{B}}_{A^{{1}/{2}}}\left( {%
\mathcal{H}}\right) ^{n}\rightarrow 
\mathbb{R}
^{+}$ defined as 
\begin{equation*}
\left\Vert \mathbf{T}\right\Vert _{A_{\alpha ,\beta }}:=\sup \left\{ \left(
\sum\limits_{k=1}^{n}\left( \alpha \left\vert \left\langle
T_{k}x,x\right\rangle _{A}\right\vert ^{2}+\beta \left\Vert
T_{k}x\right\Vert _{A}^{2}\right) \right) ^{{1}/{2}}:x\in {\mathcal{H}}%
\text{, }\left\Vert x\right\Vert _{A}=1\text{ }\right\} \text{.}
\end{equation*}
\end{definition}
We say that $\left\Vert \mathbf{T}\right\Vert _{A_{\alpha ,\beta }}$ is $\left( \alpha ,\beta \right) $-$A$-Euclidean operator radius of $\mathbf{T}$.
\begin{remark}
(i) For $\alpha =0$, $\beta =1$,  $\left\Vert \mathbf{T}\right\Vert
_{A_{\alpha ,\beta }}=\left\Vert \mathbf{T}\right\Vert _{A}$.
(ii) For $\alpha =1$, $\beta =0$, $\left\Vert \mathbf{T}\right\Vert
_{A_{\alpha ,\beta }}=\omega _{A}\left( \mathbf{T}\right) $.

\end{remark}

First, we show that $\left\Vert \mathbf{T}\right\Vert
_{A_{\alpha ,\beta }}$ defnes a seminorm on ${\mathcal{B}}_{A^{{1}/{2}
}}\left( {\mathcal{H}}\right) ^{n}$. 

\begin{proposition}
\label{P1}Let $\mathbf{T}=\left( T_{1},\cdots ,T_{n}\right) \in {\mathcal{B}}%
_{A^{{1}/{2}}}\left( {\mathcal{H}}\right) ^{n}$ and $\ \mathbf{S=}\left(
S_{1},\cdots ,S_{n}\right) \in {\mathcal{B}}_{A^{{1}/{2}}}\left( {%
\mathcal{H}}\right) ^{n}$. Then, the following properties hold:
\end{proposition}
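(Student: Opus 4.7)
\medskip
\noindent\textbf{Proof plan.}
Since the proposition is meant to show that $\lVert \cdot\rVert_{A_{\alpha,\beta}}$ is a genuine seminorm on ${\mathcal{B}}_{A^{1/2}}({\mathcal{H}})^{n}$, the properties to verify will be non-negativity, absolute homogeneity $\lVert\lambda\mathbf{T}\rVert_{A_{\alpha,\beta}}=|\lambda|\lVert\mathbf{T}\rVert_{A_{\alpha,\beta}}$ for $\lambda\in\mathbb{C}$, and the triangle inequality $\lVert\mathbf{T}+\mathbf{S}\rVert_{A_{\alpha,\beta}}\leq\lVert\mathbf{T}\rVert_{A_{\alpha,\beta}}+\lVert\mathbf{S}\rVert_{A_{\alpha,\beta}}$. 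Likely the statement also records that $\lVert\mathbf{T}\rVert_{A_{\alpha,\beta}}=0$ iff $A T_k=0$ for every $k$ (when $\beta>0$), together with the sandwich inequality
$\sqrt{\beta}\,\lVert\mathbf{T}\rVert_{A}\leq\lVert\mathbf{T}\rVert_{A_{\alpha,\beta}}\leq\sqrt{\alpha+\beta}\,\lVert\mathbf{T}\rVert_{A}$
and $\sqrt{\alpha}\,\omega_A(\mathbf{T})\leq\lVert\mathbf{T}\rVert_{A_{\alpha,\beta}}$.

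Non-negativity is immediate from the definition. For absolute homogeneity I would simply factor $|\lambda|^2$ out of each term $\alpha|\langle\lambda T_kx,x\rangle_A|^2+\beta\lVert\lambda T_kx\rVert_A^2$, pull it outside the sum, and extract it from the square root before taking the supremum. The characterization of the zero case will then follow from the fact that if $\beta>0$ and $\lVert\mathbf{T}\rVert_{A_{\alpha,\beta}}=0$, then in particular $\lVert T_kx\rVert_A=0$ for all $x$ with $\lVert x\rVert_A=1$, which combined with the standard extension to $x\in\mathcal{N}(A)^{\perp_A}$ gives $AT_k=0$. The bounds comparing with $\lVert\mathbf{T}\rVert_A$ and $\omega_A(\mathbf{T})$ follow from the elementary inequality $|\langle T_kx,x\rangle_A|\leq\lVert T_kx\rVert_A$ (via $A$-Cauchy--Schwarz).

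The main step is the triangle inequality. For fixed $x\in\mathcal{H}$ with $\lVert x\rVert_A=1$, define
\begin{equation*}
a_k=\sqrt{\alpha}\,|\langle T_kx,x\rangle_A|,\ b_k=\sqrt{\beta}\,\lVert T_kx\rVert_A,\ c_k=\sqrt{\alpha}\,|\langle S_kx,x\rangle_A|,\ d_k=\sqrt{\beta}\,\lVert S_kx\rVert_A.
\end{equation*}
Applying the triangle inequalities for $|\langle\cdot,x\rangle_A|$ and $\lVert\cdot\rVert_A$ coordinate-wise and squaring, I get
\begin{equation*}
\alpha\bigl|\langle(T_k+S_k)x,x\rangle_A\bigr|^2+\beta\lVert(T_k+S_k)x\rVert_A^2\leq (a_k+c_k)^2+(b_k+d_k)^2.
\end{equation*}
Viewing $(a_1,b_1,\ldots,a_n,b_n)$ and $(c_1,d_1,\ldots,c_n,d_n)$ as vectors in $\mathbb{R}^{2n}$, Minkowski's inequality in the Euclidean norm yields
\begin{equation*}
\Bigl(\sum_{k=1}^n\bigl((a_k+c_k)^2+(b_k+d_k)^2\bigr)\Bigr)^{1/2}\leq \Bigl(\sum_{k=1}^n(a_k^2+b_k^2)\Bigr)^{1/2}+\Bigl(\sum_{k=1}^n(c_k^2+d_k^2)\Bigr)^{1/2}.
\end{equation*}
Bounding each of the two summands on the right by $\lVert\mathbf{T}\rVert_{A_{\alpha,\beta}}$ and $\lVert\mathbf{S}\rVert_{A_{\alpha,\beta}}$ respectively and then taking the supremum over unit $A$-seminorm vectors finishes the proof.

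The only mild obstacle is the repackaging of the scalar quantities so that Minkowski's inequality applies in $\mathbb{R}^{2n}$ rather than being forced through two separate $\mathbb{R}^n$ estimates; once that observation is made, every other step is either routine homogeneity bookkeeping or an immediate application of the auxiliary lemmas gathered in Section~\ref{sec2}.
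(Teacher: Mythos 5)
Your proposal is correct and follows essentially the same route as the paper: both reduce the triangle inequality to the coordinate-wise triangle inequalities for $\left\vert \left\langle \cdot\,x,x\right\rangle _{A}\right\vert$ and $\left\Vert \cdot\,x\right\Vert _{A}$ followed by an $\ell^{2}$-aggregation, which you package as Minkowski's inequality in $\mathbb{R}^{2n}$ while the paper re-derives it by expanding the squares and applying the Cauchy--Schwarz inequality twice (termwise in $\mathbb{R}^{2}$ and then over the sum in $k$). The remaining items (the zero characterization and homogeneity) are treated as immediate in the paper as well; just note that the paper's property (i) holds for every admissible $(\alpha,\beta)\neq(0,0)$, the case $\beta=0$ following from $\tfrac12\left\Vert T_{k}\right\Vert _{A}\leq\omega_{A}\left( T_{k}\right)$, so your restriction to $\beta>0$ is unnecessary.
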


(i) $\left\Vert \mathbf{T}\right\Vert _{_{A_{\alpha ,\beta }}}=0$ if and
only if $AT_{k}=0$ for all $k=1,...,n$.

(ii) $\left\Vert \lambda \mathbf{T}\right\Vert _{_{A_{\alpha ,\beta
}}}=\left\vert \lambda \right\vert \left\Vert \mathbf{T}\right\Vert
_{_{A_{\alpha ,\beta }}}$ for all $\lambda \in 
\mathbb{C}
$.

(iii) $\left\Vert \mathbf{T+S}\right\Vert _{_{A_{\alpha ,\beta }}}\leq
\left\Vert \mathbf{T}\right\Vert _{_{A_{\alpha ,\beta }}}+\left\Vert \mathbf{%
S}\right\Vert _{_{A_{\alpha ,\beta }}}$.

\begin{proof}
The proofs (i) and (ii) follow easily, we only prove (iii).
Let $x\in {\mathcal{H}}$ with $\left\Vert x\right\Vert _{A}=1$. Employing the Cauchy--Schwarz inequality for sums, we have
\begin{eqnarray*}
&&\sum\limits_{k=1}^{n}\left( \alpha \left\vert \left\langle \left(
T_{k}+S_{k}\right) x,x\right\rangle _{A}\right\vert ^{2}+\beta \left\Vert
\left( T_{k}+S_{k}\right) x\right\Vert _{A}^{2}\right) \\
&\leq &\sum\limits_{k=1}^{n}\left( \alpha \left( \left\vert \left\langle
T_{k}x,x\right\rangle _{A}\right\vert +\left\vert \left\langle
S_{k}x,x\right\rangle _{A}\right\vert \right) ^{2}+\beta \left( \left\Vert
T_{k}x\right\Vert _{A}+\left\Vert S_{k}x\right\Vert _{A}\right) ^{2}\right)
\\
&=&\sum\limits_{k=1}^{n}\left( \alpha \left\vert \left\langle
T_{k}x,x\right\rangle _{A}\right\vert ^{2}+\beta \left\Vert
T_{k}x\right\Vert _{A}^{2}\right) +\sum\limits_{k=1}^{n}\left( \alpha
\left\vert \left\langle S_{k}x,x\right\rangle _{A}\right\vert ^{2}+\beta
\left\Vert S_{k}x\right\Vert _{A}^{2}\right) \\
&&+2\sum\limits_{k=1}^{n}\left( \alpha \left\vert \left\langle
T_{k}x,x\right\rangle _{A}\right\vert \left\vert \left\langle
S_{k}x,x\right\rangle _{A}\right\vert +\beta \left\Vert T_{k}x\right\Vert
_{A}\left\Vert S_{k}x\right\Vert _{A}\right) \\
&=&\sum\limits_{k=1}^{n}\left( \alpha \left\vert \left\langle
T_{k}x,x\right\rangle _{A}\right\vert ^{2}+\beta \left\Vert
T_{k}x\right\Vert _{A}^{2}\right) +\sum\limits_{k=1}^{n}\left( \alpha
\left\vert \left\langle S_{k}x,x\right\rangle _{A}\right\vert ^{2}+\beta
\left\Vert S_{k}x\right\Vert _{A}^{2}\right) \\
&&+2\sum\limits_{k=1}^{n}\left( \left\vert \left\langle \sqrt{\alpha }%
T_{k}x,x\right\rangle _{A}\right\vert \left\vert \left\langle \sqrt{\alpha }%
S_{k}x,x\right\rangle _{A}\right\vert +\left\Vert \sqrt{\beta }%
T_{k}x\right\Vert _{A}\left\Vert \sqrt{\beta }S_{k}x\right\Vert _{A}\right)
\\
&\leq &\sum\limits_{k=1}^{n}\left( \alpha \left\vert \left\langle
T_{k}x,x\right\rangle _{A}\right\vert ^{2}+\beta \left\Vert
T_{k}x\right\Vert _{A}^{2}\right) +\sum\limits_{k=1}^{n}\left( \alpha
\left\vert \left\langle S_{k}x,x\right\rangle _{A}\right\vert ^{2}+\beta
\left\Vert S_{k}x\right\Vert _{A}^{2}\right) \\
&&+2\sum\limits_{k=1}^{n}\sqrt{\alpha \left\vert \left\langle
T_{k}x,x\right\rangle _{A}\right\vert ^{2}+\beta \left\Vert
T_{k}x\right\Vert _{A}^{2}}\sqrt{\alpha \left\vert \left\langle
S_{k}x,x\right\rangle _{A}\right\vert ^{2}+\beta \left\Vert
S_{k}x\right\Vert _{A}^{2}} \\
&\leq &\sum\limits_{k=1}^{n}\left( \alpha \left\vert \left\langle
T_{k}x,x\right\rangle _{A}\right\vert ^{2}+\beta \left\Vert
T_{k}x\right\Vert _{A}^{2}\right) +\sum\limits_{k=1}^{n}\left( \alpha
\left\vert \left\langle S_{k}x,x\right\rangle _{A}\right\vert ^{2}+\beta
\left\Vert S_{k}x\right\Vert _{A}^{2}\right) \\
&&+2\left( \sum\limits_{k=1}^{n}\left( \alpha \left\vert \left\langle
T_{k}x,x\right\rangle _{A}\right\vert ^{2}+\beta \left\Vert
T_{k}x\right\Vert _{A}^{2}\right) \right) ^{{1}/{2}}\left(
\sum\limits_{k=1}^{n}\left( \alpha \left\vert \left\langle
S_{k}x,x\right\rangle _{A}\right\vert ^{2}+\beta \left\Vert
S_{k}x\right\Vert _{A}^{2}\right) \right) ^{{1}/{2}} \\
&\leq &\left\Vert \mathbf{T}\right\Vert _{A_{\alpha ,\beta }}^{2}+\left\Vert 
\mathbf{S}\right\Vert _{A_{\alpha ,\beta }}^{2}+2\left\Vert \mathbf{T}%
\right\Vert _{A_{\alpha ,\beta }}\left\Vert \mathbf{S}\right\Vert
_{A_{\alpha ,\beta }} 
=\left( \left\Vert \mathbf{T}\right\Vert _{A_{\alpha ,\beta }}+\left\Vert 
\mathbf{S}\right\Vert _{A_{\alpha ,\beta }}\right) ^{2}\text{.}
\end{eqnarray*}
Taking the supremum over $x\in {\mathcal{H}}$ with $\left\Vert x\right\Vert
_{A}=1$,  we get
%
$\left\Vert \mathbf{T+S}\right\Vert _{A_{\alpha ,\beta }}\leq \left\Vert 
\mathbf{T}\right\Vert _{A_{\alpha ,\beta }}+\left\Vert \mathbf{S}\right\Vert
_{A_{\alpha ,\beta }}\text{.}$
\end{proof}

In particular, if we consider the $n$-tuple $\mathbf{T}=\left( S,\cdots ,S\right) \in {\mathcal{B}}_{A^{{1}/{2}}}\left( {\mathcal{H}}\right) ^{n}$, then we get

\begin{proposition}
\label{P2} \bigskip Let $S\in {\mathcal{B}}_{A}\left( {\mathcal{H}}\right) $
and $\mathbf{T}=\left( S,\cdots ,S\right) \in {
\mathcal{B}}_{A^{{1}/{2}}}\left( {\mathcal{H}}\right) ^{n}$. For $0\leq \alpha \leq 1$ and $\beta=1-\alpha$, we have
\begin{equation*}
\left\Vert \mathbf{T}\right\Vert _{A_{\alpha ,\beta }}=\sqrt{n}\left\Vert
S\right\Vert _{A_{\alpha}}\text{.}
\end{equation*}
\end{proposition}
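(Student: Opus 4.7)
The plan is to prove this by direct substitution into the definition of $\|\mathbf{T}\|_{A_{\alpha,\beta}}$, since the $n$-tuple $\mathbf{T}=(S,\dots,S)$ has the same operator in every slot and $\beta$ is chosen to equal $1-\alpha$, matching the definition of the $A_\alpha$-seminorm recalled from \cite{BH1}.

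First, I would fix $x\in\mathcal{H}$ with $\|x\|_A=1$ and evaluate the inner sum appearing in the definition of $\|\mathbf{T}\|_{A_{\alpha,\beta}}$. With $T_k=S$ for every $k=1,\dots,n$, each term of the sum equals $\alpha|\langle Sx,x\rangle_A|^2+\beta\|Sx\|_A^2$, so
\[
\sum_{k=1}^n\bigl(\alpha|\langle T_kx,x\rangle_A|^2+\beta\|T_kx\|_A^2\bigr)=n\bigl(\alpha|\langle Sx,x\rangle_A|^2+(1-\alpha)\|Sx\|_A^2\bigr),
\]
using $\beta=1-\alpha$. Taking the square root pulls out a factor of $\sqrt{n}$.

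Then I would take the supremum over all unit vectors $x$ (with respect to $\|\cdot\|_A$). Since $\sqrt{n}$ is a positive constant independent of $x$, it commutes with the supremum, yielding
\[
\|\mathbf{T}\|_{A_{\alpha,\beta}}=\sqrt{n}\sup_{\|x\|_A=1}\bigl(\alpha|\langle Sx,x\rangle_A|^2+(1-\alpha)\|Sx\|_A^2\bigr)^{1/2}=\sqrt{n}\,\|S\|_{A_\alpha},
\]
where the last equality is precisely the definition of the $A_\alpha$-seminorm of $S$.

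There is no real obstacle here: the result is an immediate consequence of the definitions together with the hypothesis $\beta=1-\alpha$, which exactly normalizes the sum so that it reproduces the $A_\alpha$-seminorm integrand $n$ times. The only thing worth noting is that the hypothesis $S\in\mathcal{B}_A(\mathcal{H})\subseteq\mathcal{B}_{A^{1/2}}(\mathcal{H})$ ensures $\mathbf{T}\in\mathcal{B}_{A^{1/2}}(\mathcal{H})^n$, so both sides are well defined.
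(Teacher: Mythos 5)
Your computation is correct and is exactly the intended justification: the paper states this proposition without proof as an immediate specialization, and your direct substitution (each of the $n$ identical terms equals $\alpha|\langle Sx,x\rangle_A|^2+(1-\alpha)\|Sx\|_A^2$, pull out $\sqrt{n}$, take the supremum) is precisely the argument the authors have in mind. No gaps.
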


Next we show that $\left\Vert \mathbf{\cdot }\right\Vert
_{A_{\alpha ,\beta }}$ is equivalent to the $A$-Euclidean operator radius
and the $A$-joint operator norm on ${\mathcal{B}}_{A^{{1}/{2}
}}\left( {\mathcal{H}}\right) ^{n}$. 

\begin{theorem}
\label{TH1} Let $\mathbf{T}=\left( T_{1},\cdots ,T_{n}\right) \in {\mathcal{B
}}_{A^{{1}/{2}}}\left( {\mathcal{H}}\right) ^{n}$. Then
\end{theorem}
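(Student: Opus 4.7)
The theorem continues the sentence asserting that $\|\cdot\|_{A_{\alpha,\beta}}$ is equivalent to both $\omega_A(\cdot)$ and $\|\cdot\|_A$ on ${\mathcal{B}}_{A^{1/2}}({\mathcal{H}})^n$, so I expect an inequality of the shape
\[
\max\bigl\{\sqrt{\alpha}\,\omega_A(\mathbf{T}),\ \sqrt{\beta}\,\|\mathbf{T}\|_A\bigr\}
\ \leq\ \|\mathbf{T}\|_{A_{\alpha,\beta}}
\ \leq\ \sqrt{\alpha+\beta}\,\|\mathbf{T}\|_A,
\]
possibly supplemented (via Lemma \ref{L4}) by a form expressing the upper bound in terms of $\omega_A(\mathbf{T})$. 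The plan is to work entirely at the level of the quantity under the supremum, establish the pointwise comparisons for a fixed unit $A$-norm vector, and only take the supremum at the end.

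First I would produce the upper bound. Fix $x\in{\mathcal{H}}$ with $\|x\|_A=1$. The semi-inner-product Cauchy--Schwarz inequality gives $|\langle T_kx,x\rangle_A|^2\le \|T_kx\|_A^2\,\|x\|_A^2=\|T_kx\|_A^2$ for each $k$, hence
\[
\sum_{k=1}^{n}\Bigl(\alpha|\langle T_kx,x\rangle_A|^2+\beta\|T_kx\|_A^2\Bigr)\ \le\ (\alpha+\beta)\sum_{k=1}^{n}\|T_kx\|_A^2.
\]
Taking square roots and the supremum over $\|x\|_A=1$ yields $\|\mathbf{T}\|_{A_{\alpha,\beta}}\le \sqrt{\alpha+\beta}\,\|\mathbf{T}\|_A$. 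If the statement involves $\omega_A(\mathbf{T})$, I would then chain with Lemma \ref{L4} to get $\|\mathbf{T}\|_{A_{\alpha,\beta}}\le 2\sqrt{n(\alpha+\beta)}\,\omega_A(\mathbf{T})$.

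For the lower bounds I would simply discard one of the two non-negative summands. Dropping $\beta\|T_kx\|_A^2$ gives $\sum_k(\alpha|\langle T_kx,x\rangle_A|^2+\beta\|T_kx\|_A^2)\ge \alpha\sum_k|\langle T_kx,x\rangle_A|^2$; taking the sup over unit $A$-norm vectors produces $\|\mathbf{T}\|_{A_{\alpha,\beta}}\ge \sqrt{\alpha}\,\omega_A(\mathbf{T})$. Symmetrically, dropping the $\alpha$-term gives $\|\mathbf{T}\|_{A_{\alpha,\beta}}\ge \sqrt{\beta}\,\|\mathbf{T}\|_A$.

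There is no essential obstacle here; everything reduces to pointwise inequalities followed by a single sup. The only point requiring minor care is making sure the two terms under the square root are handled in the right order so that the sup of a sum does not get confused with the sum of sups, but that is automatic once the estimates are written out pointwise as above.
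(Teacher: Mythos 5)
Your pointwise-estimate strategy is sound and the inequalities you derive are true, but they are strictly weaker than what Theorem \ref{TH1} actually asserts, so as written the proposal does not prove the theorem. The paper's statement is
\[
\sqrt{\alpha+\beta}\,\omega_A(\mathbf{T})\ \le\ \|\mathbf{T}\|_{A_{\alpha,\beta}}\ \le\ \sqrt{\alpha+4\beta n}\,\omega_A(\mathbf{T}),
\qquad
\max\Bigl\{\sqrt{\beta},\ \tfrac12\sqrt{\tfrac{\alpha+\beta}{n}}\Bigr\}\|\mathbf{T}\|_A\ \le\ \|\mathbf{T}\|_{A_{\alpha,\beta}}\ \le\ \sqrt{\alpha+\beta}\,\|\mathbf{T}\|_A .
\]
Two of your four bounds fall short of these constants. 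First, for the lower bound you discard the $\beta\|T_kx\|_A^2$ term and obtain only $\sqrt{\alpha}\,\omega_A(\mathbf{T})$; the paper instead bounds that term from below by $\beta|\langle T_kx,x\rangle_A|^2$ (Cauchy--Schwarz used in the opposite direction from your upper bound), which yields the stronger constant $\sqrt{\alpha+\beta}$. Second, for the upper bound in terms of $\omega_A$ you chain $\|\mathbf{T}\|_{A_{\alpha,\beta}}\le\sqrt{\alpha+\beta}\,\|\mathbf{T}\|_A$ with Lemma \ref{L4}, giving $2\sqrt{n(\alpha+\beta)}\,\omega_A(\mathbf{T})=\sqrt{4n\alpha+4n\beta}\,\omega_A(\mathbf{T})$, which is weaker than $\sqrt{\alpha+4\beta n}\,\omega_A(\mathbf{T})$ for every $n\ge 1$ with $\alpha>0$. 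The paper avoids this loss by first bounding the supremum of the sum by the sum of suprema, $\|\mathbf{T}\|_{A_{\alpha,\beta}}^2\le\alpha\,\omega_A^2(\mathbf{T})+\beta\,\|\mathbf{T}\|_A^2$, and applying Lemma \ref{L4} only to the $\beta$-term.

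Consequently your part (ii) lower bound is also incomplete: you get the $\sqrt{\beta}\,\|\mathbf{T}\|_A$ component, but the second component $\tfrac12\sqrt{(\alpha+\beta)/n}\,\|\mathbf{T}\|_A$ of the maximum comes from combining the sharpened lower bound $\sqrt{\alpha+\beta}\,\omega_A(\mathbf{T})\le\|\mathbf{T}\|_{A_{\alpha,\beta}}$ with $\omega_A(\mathbf{T})\ge\tfrac{1}{2\sqrt n}\|\mathbf{T}\|_A$; with your weaker $\sqrt{\alpha}$ constant you would only reach $\tfrac12\sqrt{\alpha/n}\,\|\mathbf{T}\|_A$. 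Your upper bound $\|\mathbf{T}\|_{A_{\alpha,\beta}}\le\sqrt{\alpha+\beta}\,\|\mathbf{T}\|_A$ coincides with the paper's and is proved the same way. So the gap is not in technique but in a missing idea at two places: use Cauchy--Schwarz to convert (rather than drop) the $\beta$-term in the lower estimate, and split $\alpha$- and $\beta$-terms before invoking Lemma \ref{L4} in the upper estimate.
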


(i) $\sqrt{\alpha +\beta } \, \omega _{A}\left( \mathbf{T}\right) \leq
\left\Vert \mathbf{T}\right\Vert _{A_{\alpha ,\beta }}\leq \sqrt{\alpha
+4\beta n} \,\omega _{A}\left( \mathbf{T}\right) $.

(ii) $\max \left\{ \sqrt{\beta },\frac{1}{2}\sqrt{\frac{\alpha +\beta }{n}}%
\right\} \left\Vert \mathbf{T}\right\Vert _{A}\leq \left\Vert \mathbf{T}%
\right\Vert _{A_{\alpha ,\beta }}\leq \sqrt{\alpha +\beta }\left\Vert 
\mathbf{T}\right\Vert _{A}$.

\begin{proof}
(i) Let $x\in {\mathcal{H}}$ with $\left\Vert x\right\Vert _{A}=1$. We have 
\begin{eqnarray*}
\left\Vert \mathbf{T}\right\Vert _{A_{\alpha ,\beta }}^{2}
&=&\sup_{\left\Vert x\right\Vert _{A}=1}\sum\limits_{k=1}^{n}\left( \alpha
\left\vert \left\langle T_{k}x,x\right\rangle _{A}\right\vert ^{2}+\beta
\left\Vert T_{k}x\right\Vert _{A}^{2}\right) \\
&= &\sup_{\left\Vert x\right\Vert _{A}=1}\left(
\sum\limits_{k=1}^{n}\alpha \left\vert \left\langle T_{k}x,x\right\rangle
_{A}\right\vert ^{2}+\sum\limits_{k=1}^{n}\beta \left\Vert T_{k}x\right\Vert
_{A}^{2}\right) \\
&\leq &\sup_{\left\Vert x\right\Vert _{A}=1}\left(
\sum\limits_{k=1}^{n}\alpha \left\vert \left\langle T_{k}x,x\right\rangle
_{A}\right\vert ^{2}\right) +\sup_{\left\Vert x\right\Vert =1}\left(
\sum\limits_{k=1}^{n}\beta \left\Vert T_{k}x\right\Vert _{A}^{2}\right) \\
&=&\alpha \omega _{A}^{2}\left( \mathbf{T}\right) +\beta \left\Vert \mathbf{T%
}\right\Vert _{A}^{2} \\
&\leq &\alpha \omega _{A}^{2}\left( \mathbf{T}\right) +4\beta n\omega
_{A}^{2}\left( \mathbf{T}\right) \quad
\text{(by Lemma \ref{L4})} \\
&=&\left( \alpha +4\beta n\right) \omega _{A}^{2}\left( \mathbf{T}\right) 
\text{.}
\end{eqnarray*}
Also, we have
\begin{eqnarray*}
\left\Vert \mathbf{T}\right\Vert _{A_{\alpha ,\beta }} &=&\sup_{\left\Vert
x\right\Vert _{A}=1}\left( \sum\limits_{k=1}^{n}\alpha \left\vert
\left\langle T_{k}x,x\right\rangle _{A}\right\vert ^{2}+\beta \left\Vert
T_{k}x\right\Vert _{A}^{2}\right) ^{{1}/{2}} \\
&\geq &\sup_{\left\Vert x\right\Vert _{A}=1}\left(
\sum\limits_{k=1}^{n}\alpha \left\vert \left\langle T_{k}x,x\right\rangle
_{A}\right\vert ^{2}+\beta \left\vert \left\langle T_{k}x,x\right\rangle
_{A}\right\vert ^{2}\right) ^{{1}/{2}} \\
&=&\sup_{\left\Vert x\right\Vert _{A}=1}\left( \sum\limits_{k=1}^{n}\left(
\alpha +\beta \right) \left\vert \left\langle T_{k}x,x\right\rangle
_{A}\right\vert ^{2}\right) ^{{1}/{2}} \\
&=&\sqrt{\alpha +\beta }\,\omega _{A}\left( \mathbf{T}\right) \text{.}
\end{eqnarray*}
Therefore,
\begin{equation*}
\sqrt{\alpha +\beta }\,\omega _{A}\left( \mathbf{T}\right) \leq \left\Vert 
\mathbf{T}\right\Vert _{A_{\alpha ,\beta }}\leq \sqrt{\alpha +4\beta n}\,
\omega _{A}\left( \mathbf{T}\right) \text{.}
\end{equation*}

(ii) Let $x\in {\mathcal{H}}$ with $\left\Vert x\right\Vert _{A}=1$. We have%
\begin{eqnarray*}
\left\Vert \mathbf{T}\right\Vert _{A_{\alpha ,\beta }} &=&\sup_{\left\Vert
x\right\Vert _{A}=1}\left( \sum\limits_{k=1}^{n}\left( \alpha \left\vert
\left\langle T_{k}x,x\right\rangle _{A}\right\vert ^{2}+\beta \left\Vert
T_{k}x\right\Vert _{A}^{2}\right) \right) ^{{1}/{2}} \\
&\leq &\sup_{\left\Vert x\right\Vert _{A}=1}\left(
\sum\limits_{k=1}^{n}\left( \alpha \left\Vert T_{k}x\right\Vert
_{A}^{2}+\beta \left\Vert T_{k}x\right\Vert _{A}^{2}\right) \right) ^{\frac{1%
}{2}} \\
&=&\sup_{\left\Vert x\right\Vert _{A}=1}\left( \sum\limits_{k=1}^{n}\left(
\alpha +\beta \right) \left\Vert T_{k}x\right\Vert _{A}^{2}\right) ^{\frac{1%
}{2}} 
=\sqrt{\alpha +\beta }\left\Vert \mathbf{T}\right\Vert _{A}\text{.}
\end{eqnarray*}
Moreover, we have
\begin{eqnarray*}
\left\Vert \mathbf{T}\right\Vert _{A_{\alpha ,\beta }} &\geq &\sqrt{\alpha
+\beta }\omega _{A}\left( \mathbf{T}\right) 
\geq \frac{1}{2\sqrt{n}}\sqrt{\alpha +\beta }\left\Vert \mathbf{T}%
\right\Vert _{A} 
=\frac{1}{2}\sqrt{\frac{\alpha +\beta }{n}}\left\Vert \mathbf{T}%
\right\Vert _{A}\text{}
\end{eqnarray*}
and
$\left\Vert \mathbf{T}\right\Vert _{A_{\alpha ,\beta }}\geq \sqrt{\beta }%
\left\Vert \mathbf{T}\right\Vert _{A}\text{.} $ 
Thus, 
$\max \left\{ \sqrt{\beta },\frac{1}{2}\sqrt{\frac{\alpha +\beta }{n}}%
\right\} \left\Vert \mathbf{T}\right\Vert _{A}\leq \left\Vert \mathbf{T}%
\right\Vert _{A_{\alpha ,\beta }}\text{.}$
Hence,%
\begin{equation*}
\max \left\{ \sqrt{\beta }, \, \frac{1}{2}\sqrt{\frac{\alpha +\beta }{n}}
\right\} \left\Vert \mathbf{T}\right\Vert _{A}\leq \left\Vert \mathbf{T}
\right\Vert _{A_{\alpha ,\beta }}\leq \sqrt{\alpha +\beta }\left\Vert 
\mathbf{T}\right\Vert _{A}\text{.}
\end{equation*}
\end{proof}

\begin{remark}
\bigskip By setting $n=1$ and $A=I$ in Theorem \ref{TH1}, we can also obtain \cite[%
Theorem 2.1]{Sain}.
\end{remark}

In the following proposition, we prove that the $\left(\alpha ,\beta \right)$-$A$-Euclidean operator radius is weakly $A$-unitarily invariant.

\begin{proposition}
Let $\mathbf{T}=\left( T_{1},\cdots ,T_{n}\right) \in {\mathcal{B}}_A^{}\left( {\mathcal{H}}\right) ^{n}$ be an $n$-tuple 
and let $U\in {\mathcal{B}}_{A}\left( {\mathcal{H}}\right) $ be an $A$-unitary (i.e., $\left\Vert Ux\right\Vert
_{A}=\left\Vert U^{\sharp _{A}}x\right\Vert _{A}=\left\Vert x\right\Vert
_{A} $ for all $x\in {\mathcal{H}}$).  Then 
\begin{equation*}
\left\Vert U\mathbf{T}U^{\sharp _{A}}\right\Vert _{\alpha ,\beta
}=\left\Vert \mathbf{T}\right\Vert _{\alpha ,\beta }, \quad \text{where  $U\mathbf{T}U^{\sharp _{A}}:=\left( UT_{1}U^{\sharp _{A}},\cdots
,UT_{n}U^{\sharp _{A}}\right) $.}
\end{equation*}
\end{proposition}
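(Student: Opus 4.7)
The plan is to exploit two fundamental properties of an $A$-unitary $U$: the adjoint identity $\langle Uz, w\rangle_A = \langle z, U^{\sharp_A}w\rangle_A$ coming from the definition of the $A$-adjoint, and the $A$-isometric equalities $\|Uz\|_A = \|U^{\sharp_A}z\|_A = \|z\|_A$ for all $z \in \mathcal{H}$ coming from $A$-unitarity. Using these, each summand of the expression defining $\|U\mathbf{T}U^{\sharp_A}\|_{A_{\alpha,\beta}}$ should transform by a change of variable into the corresponding summand of $\|\mathbf{T}\|_{A_{\alpha,\beta}}$.

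Concretely, I would first fix $x \in \mathcal{H}$ with $\|x\|_A = 1$ and set $y := U^{\sharp_A}x$; then $\|y\|_A = \|x\|_A = 1$ by the $A$-isometry of $U^{\sharp_A}$. Applying the adjoint identity with $z = T_k U^{\sharp_A}x$ gives $\langle UT_k U^{\sharp_A}x, x\rangle_A = \langle T_k y, y\rangle_A$, while the $A$-isometry of $U$ gives $\|UT_k U^{\sharp_A}x\|_A = \|T_k y\|_A$. Summing over $k$ and taking the supremum over $\|x\|_A = 1$ immediately yields
\[
\|U\mathbf{T}U^{\sharp_A}\|_{A_{\alpha,\beta}} \leq \|\mathbf{T}\|_{A_{\alpha,\beta}}.
\]

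For the reverse inequality, I would fix $y$ with $\|y\|_A = 1$ and set $x := Uy$, so $\|x\|_A = 1$. Then the above computation rewrites the summand of $\|U\mathbf{T}U^{\sharp_A}\|_{A_{\alpha,\beta}}$ at $x$ as the summand of $\|\mathbf{T}\|_{A_{\alpha,\beta}}$ at $U^{\sharp_A}Uy$, not at $y$. The delicate point is that $U^{\sharp_A}Uy$ need not equal $y$ on the nose; however, from $\langle U^{\sharp_A}Uy, y\rangle_A = \langle Uy, Uy\rangle_A = \|y\|_A^2$ together with $\|U^{\sharp_A}Uy\|_A = \|Uy\|_A = \|y\|_A$, a direct expansion shows $\|U^{\sharp_A}Uy - y\|_A = 0$, i.e.\ the two vectors agree modulo $\mathcal{N}(A)$. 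Since every $T_k$ is $A$-bounded, $T_k(\mathcal{N}(A)) \subseteq \mathcal{N}(A)$, and therefore $\|T_k U^{\sharp_A}Uy\|_A = \|T_k y\|_A$ and $\langle T_k U^{\sharp_A}Uy, U^{\sharp_A}Uy\rangle_A = \langle T_k y, y\rangle_A$. This delivers the opposite inequality and completes the proof.

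The main obstacle is precisely this last point: unlike genuine unitaries, an $A$-unitary satisfies only $U^{\sharp_A}U = UU^{\sharp_A} = P_{\overline{\mathcal{R}(A)}}$, so the change of variables $x \mapsto Uy$ does not land back at $y$ but at $U^{\sharp_A}Uy$. The argument hinges on the observation that the whole expression in the supremum is invariant under perturbations by vectors in $\mathcal{N}(A)$, which is ensured by the $A$-boundedness of the $T_k$'s.
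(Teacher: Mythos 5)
Your proof is correct and follows essentially the same route as the paper: the forward inequality via the substitution $y = U^{\sharp_A}x$ (adjoint identity plus $A$-isometry), and the reverse via $x = Uy$ together with the observation that the expression is insensitive to perturbations in $\mathcal{N}(A)$. The only cosmetic difference is that you verify $\left\Vert U^{\sharp_A}Uy - y\right\Vert_A = 0$ directly, whereas the paper invokes $U^{\sharp_A}U = P_{\overline{\mathcal{R}(A)}}$ and decomposes $y = Py + x$ with $Ax = 0$; both hinge on $T_k(\mathcal{N}(A)) \subseteq \mathcal{N}(A)$.
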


\begin{proof}
Since $U$ is an $A$-unitary operator, it follows that 
\begin{eqnarray*}
\left\Vert U\mathbf{T}U^{\sharp _{A}}\right\Vert _{A_{\alpha ,\beta }}
&=&\sup_{\left\Vert x\right\Vert _{A}=1}\left( \sum\limits_{k=1}^{n} \left(\alpha
\left\vert \left\langle UT_{k}U^{\sharp _{A}}x,x\right\rangle
_{A}\right\vert ^{2}+\beta \left\Vert UT_{k}U^{\sharp _{A}}x\right\Vert
_{A}^{2} \right)\right) ^{{1}/{2}} \\
&=&\sup_{\left\Vert x\right\Vert _{A}=1}\left( \sum\limits_{k=1}^{n} \left( \alpha
\left\vert \left\langle T_{k}U^{\sharp _{A}}x,U^{\sharp _{A}}x\right\rangle _{A}\right\vert ^{2}+\beta
\left\Vert UT_{k}U^{\sharp _{A}}x\right\Vert _{A}^{2}\right)\right) ^{{1}/{2}}
\\
&=&\sup_{\left\Vert y\right\Vert _{A}=1}\left( \sum\limits_{k=1}^{n} \left( \alpha
\left\vert \left\langle T_{k}y,y\right\rangle _{A}\right\vert ^{2}+\beta
\left\Vert T_{k}y\right\Vert _{A}^{2}\right)\right) ^{{1}/{2}} 
\quad \text{(setting $y =Ux$)} \\
&\leq&\left\Vert \mathbf{T}\right\Vert _{\alpha ,\beta }.
\end{eqnarray*}
We now show the reverse inequality. 
Since $U$ is an $A$-unitary, we have $U^{\sharp_A}U=P$ and so
\begin{eqnarray*}
    \left\Vert \mathbf{T}\right\Vert _{\alpha ,\beta } &=&\sup_{\left\Vert y\right\Vert _{A}=1}\left( \sum\limits_{k=1}^{n} \left( \alpha
\left\vert \left\langle T_{k}y,y\right\rangle _{A}\right\vert ^{2}+\beta
\left\Vert T_{k}y\right\Vert _{A}^{2}\right)\right) ^{{1}/{2}} \\
&=&\sup_{\left\Vert y\right\Vert _{A}=1}\left( \sum\limits_{k=1}^{n} \left( \alpha
\left\vert \left\langle T_{k}y,y\right\rangle _{A}\right\vert ^{2}+\beta
\left\Vert UT_{k}y\right\Vert _{A}^{2}\right)\right) ^{{1}/{2}} \\
&=&\sup_{\left\Vert Py\right\Vert _{A}=1}\left( \sum\limits_{k=1}^{n} \left( \alpha
\left\vert \left\langle T_{k}Py,Py\right\rangle _{A}\right\vert ^{2}+\beta
\left\Vert UT_{k}Py\right\Vert _{A}^{2}\right)\right) ^{{1}/{2}} \,\, (\text{set } y=Py+x, \, Ax=0) \\
&=&\sup_{\left\Vert Uy\right\Vert _{A}=1}\left( \sum\limits_{k=1}^{n} \left( \alpha
\left\vert \left\langle UT_{k}U^{\sharp_A}Uy,Uy\right\rangle _{A}\right\vert ^{2}+\beta
\left\Vert UT_{k}U^{\sharp_A} Uy\right\Vert _{A}^{2}\right)\right) ^{{1}/{2}}\\
&\leq& \left\Vert U\mathbf{T}U^{\sharp _{A}}\right\Vert _{A_{\alpha ,\beta }}.
\end{eqnarray*}
\end{proof}

Next, we obtain an inequality for the product of two $n$-tuples.

\begin{theorem}
\label{TH2} Let $\mathbf{T}=\left( T_{1},\cdots ,T_{n}\right) ,\,  \mathbf{S=}\left(
S_{1},\cdots ,S_{n}\right) \in {\mathcal{B}}_{A^{{1}/{2}}}\left( {
\mathcal{H}}\right) ^{n}$ and let $\beta \neq 0$. Then
\begin{equation*}
\left\Vert \mathbf{TS}\right\Vert _{A_{\alpha ,\beta }}\leq \min \left\{ 2%
\sqrt{\frac{n}{\beta }},\frac{\sqrt{\alpha +\beta }}{\beta },\frac{4n}{\sqrt{%
\alpha +\beta }}\right\} \left\Vert \mathbf{T}\right\Vert _{A_{\alpha ,\beta
}}\left\Vert \mathbf{S}\right\Vert _{A_{\alpha ,\beta }}\text{.}
\end{equation*}
\end{theorem}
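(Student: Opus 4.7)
The plan is to combine the upper bound from Theorem \ref{TH1}(ii), namely $\|\mathbf{TS}\|_{A_{\alpha,\beta}} \leq \sqrt{\alpha+\beta}\,\|\mathbf{TS}\|_A$, with the submultiplicativity of the $A$-joint operator seminorm given in Lemma \ref{L5}, i.e., $\|\mathbf{TS}\|_A \leq \|\mathbf{T}\|_A\|\mathbf{S}\|_A$. After these two steps we are reduced to estimating $\|\mathbf{T}\|_A$ and $\|\mathbf{S}\|_A$ from above in terms of the new seminorm, which is exactly what the lower bound in Theorem \ref{TH1}(ii) provides.

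From the inequality $\max\bigl\{\sqrt{\beta},\tfrac{1}{2}\sqrt{(\alpha+\beta)/n}\bigr\}\|\mathbf{R}\|_A \leq \|\mathbf{R}\|_{A_{\alpha,\beta}}$, valid for any $\mathbf{R}\in {\mathcal{B}}_{A^{1/2}}({\mathcal{H}})^n$, one obtains the two independent upper estimates
\[
\|\mathbf{R}\|_A \leq \tfrac{1}{\sqrt{\beta}}\|\mathbf{R}\|_{A_{\alpha,\beta}}
\quad\text{and}\quad
\|\mathbf{R}\|_A \leq \tfrac{2\sqrt{n}}{\sqrt{\alpha+\beta}}\|\mathbf{R}\|_{A_{\alpha,\beta}}.
\]
The idea is then to plug each of these (independently for $\mathbf{T}$ and for $\mathbf{S}$) into the chain $\|\mathbf{TS}\|_{A_{\alpha,\beta}} \leq \sqrt{\alpha+\beta}\,\|\mathbf{T}\|_A\|\mathbf{S}\|_A$. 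The three non-redundant mix-and-match choices produce exactly the three quantities in the statement: using the first bound on both factors gives the constant $\sqrt{\alpha+\beta}/\beta$; using the second bound on both factors gives $\sqrt{\alpha+\beta}\cdot 4n/(\alpha+\beta)=4n/\sqrt{\alpha+\beta}$; using one of each gives $\sqrt{\alpha+\beta}\cdot\frac{1}{\sqrt{\beta}}\cdot\frac{2\sqrt{n}}{\sqrt{\alpha+\beta}} = 2\sqrt{n/\beta}$. Taking the minimum of the three resulting inequalities yields the desired bound.

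There is essentially no hard step here: once Theorem \ref{TH1}(ii) and Lemma \ref{L5} are in hand, the proof reduces to a brief arithmetic check that the three constants on the right-hand side arise from the three admissible pairings of the two conversion factors between $\|\cdot\|_A$ and $\|\cdot\|_{A_{\alpha,\beta}}$. The only thing to keep in mind is that the hypothesis $\beta \neq 0$ is needed precisely so that $\frac{1}{\sqrt{\beta}}$ makes sense; the third bound $4n/\sqrt{\alpha+\beta}$ does not require $\beta\neq 0$ and would also make sense under the standing assumption $(\alpha,\beta)\neq(0,0)$, but it has been folded into the min along with the other two.
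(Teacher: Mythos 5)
Your proposal is correct and follows essentially the same route as the paper: bound $\left\Vert \mathbf{TS}\right\Vert _{A_{\alpha ,\beta }}$ by $\sqrt{\alpha +\beta }\left\Vert \mathbf{TS}\right\Vert _{A}$, apply the submultiplicativity of Lemma \ref{L5}, and then convert each of $\left\Vert \mathbf{T}\right\Vert _{A}$, $\left\Vert \mathbf{S}\right\Vert _{A}$ back using the two lower estimates $\sqrt{\beta }\left\Vert \cdot \right\Vert _{A}\leq \left\Vert \cdot \right\Vert _{A_{\alpha ,\beta }}$ and $\tfrac{1}{2}\sqrt{(\alpha +\beta )/n}\left\Vert \cdot \right\Vert _{A}\leq \left\Vert \cdot \right\Vert _{A_{\alpha ,\beta }}$ in the three possible pairings. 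The paper phrases the second conversion through Lemma \ref{L4} together with Theorem \ref{TH1}(i) rather than quoting Theorem \ref{TH1}(ii) directly, but this is the same estimate, so the arguments coincide.
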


\begin{proof}
Following Theorem \ref{TH1}, and using Lemmas \ref{L5} and \ref{L4}, we have
\begin{eqnarray*}
\left\Vert \mathbf{TS}\right\Vert _{A_{\alpha ,\beta }} \leq \sqrt{\alpha
+\beta }\left\Vert \mathbf{TS}\right\Vert _{A} 
\leq \sqrt{\alpha +\beta }\left\Vert \mathbf{T}\right\Vert _{A}\left\Vert 
\mathbf{S}\right\Vert _{A}
&\leq & 2\sqrt{n}\sqrt{\alpha +\beta }\omega _{A}\left( \mathbf{T}\right)
\left\Vert \mathbf{S}\right\Vert _{A} \\
&\leq & 2\sqrt{\frac{n}{\beta }}\left\Vert \mathbf{T}\right\Vert _{A_{\alpha
,\beta }}\left\Vert \mathbf{S}\right\Vert _{A_{\alpha ,\beta }}\text{.}
\end{eqnarray*}
Since $\sqrt{\beta }\left\Vert \mathbf{T}\right\Vert _{A}\leq \left\Vert 
\mathbf{T}\right\Vert _{A_{\alpha ,\beta }}$ and $\sqrt{\beta }\left\Vert 
\mathbf{S}\right\Vert _{A}\leq \left\Vert \mathbf{S}\right\Vert _{A_{\alpha
,\beta }}$, similarly we also have  
\begin{eqnarray*}
\left\Vert \mathbf{TS}\right\Vert _{A_{\alpha ,\beta }} \leq \sqrt{\alpha
+\beta }\left\Vert \mathbf{TS}\right\Vert _{A} 
\leq \sqrt{\alpha +\beta }\left\Vert \mathbf{T}\right\Vert _{A}\left\Vert 
\mathbf{S}\right\Vert _{A}
\leq \frac{\sqrt{\alpha +\beta }}{\beta }\left\Vert \mathbf{T}\right\Vert
_{A_{\alpha ,\beta }}\left\Vert \mathbf{S}\right\Vert _{A_{\alpha ,\beta }}%
\text{.}
\end{eqnarray*}
Again, since $\left\Vert \mathbf{T}\right\Vert _{A}\leq 2\sqrt{n}\omega
_{A}\left( \mathbf{T}\right) $ and $\left\Vert \mathbf{S}\right\Vert
_{A}\leq 2\sqrt{n}\omega _{A}\left( \mathbf{S}\right) $, we have
\begin{eqnarray*}
\left\Vert \mathbf{TS}\right\Vert _{A_{\alpha ,\beta }} \leq \sqrt{\alpha
+\beta }\left\Vert \mathbf{TS}\right\Vert _{A} 
\leq \sqrt{\alpha +\beta }\left\Vert \mathbf{T}\right\Vert _{A}\left\Vert 
\mathbf{S}\right\Vert _{A} 
&\leq &4n\sqrt{\alpha +\beta }\omega _{A}\left( \mathbf{T}\right) \omega
_{A}\left( \mathbf{S}\right) \\
&\leq &\frac{4n}{\sqrt{\alpha +\beta }}\left\Vert \mathbf{T}\right\Vert
_{A_{\alpha ,\beta }}\left\Vert \mathbf{S}\right\Vert _{A_{\alpha ,\beta }}%
\text{.}
\end{eqnarray*}
Combining above inequalities, we get the desired
inequality.
\end{proof}

\begin{remark}
\bigskip By setting $n=1$ in Theorem \ref{TH2}, we obtain
\begin{equation*}
\left\Vert TS\right\Vert _{A_{\alpha ,\beta }}\leq \min \left\{ 2\sqrt{\frac{%
1}{\beta }},\frac{\sqrt{\alpha +\beta }}{\beta },\frac{4}{\sqrt{\alpha
+\beta }}\right\} \left\Vert T\right\Vert _{A_{\alpha ,\beta }}\left\Vert
S\right\Vert _{A_{\alpha ,\beta }}\text{,}
\end{equation*}
where \begin{equation*}
\left\Vert T\right\Vert _{A_{\alpha,\beta }}:=\sup \left\{  \left( \alpha
\left\vert \left\langle Tx,x\right\rangle _{A}\right\vert ^{2}+\beta \left\Vert Tx\right\Vert _{A}^{2} \right) ^{\frac{1}{%
2}}:x\in {\mathcal{H}}\text{, }\left\Vert x\right\Vert _{A}=1\text{ }%
\right\} \text{.} 
\end{equation*}
From the above inequality, for $A=I$, we get the inequality \cite[Theorem 2.9]{Sain}, namely
\begin{equation*}
\left\Vert TS\right\Vert _{_{\alpha ,\beta }}\leq \min \left\{ 2\sqrt{\frac{1%
}{\beta }},\frac{\sqrt{\alpha +\beta }}{\beta },\frac{4}{\sqrt{\alpha +\beta 
}}\right\} \left\Vert T\right\Vert _{_{\alpha ,\beta }}\left\Vert
S\right\Vert _{_{\alpha ,\beta }}\text{.}
\end{equation*}
\end{remark}

We again obtain inequalities for the product of two $n$-tuples under some assumptions.

\begin{theorem}
Let $\mathbf{T}=\left( T_{1},\cdots ,T_{n}\right) \in {\mathcal{B}}%
_{A}\left( {\mathcal{H}}\right) ^{n}$ and $\mathbf{S=}\left( S_{1},\cdots
,S_{n}\right) \in {\mathcal{B}}_{A^{{1}/{2}}}\left( {\mathcal{H}}\right)
^{n}$. Then the following inequalities hold:

(i) If $\mathbf{TS=ST}$ and $\beta \neq 0$, then 
\begin{equation*}
\left\Vert \mathbf{TS}\right\Vert _{A_{\alpha ,\beta }}\leq \sqrt{\frac{%
4\alpha n}{\left( \alpha +\beta \right) ^{2}}+\frac{1}{\beta }}\left\Vert 
\mathbf{T}\right\Vert _{A_{\alpha ,\beta }}\left\Vert \mathbf{S}\right\Vert
_{A_{\alpha ,\beta }}\text{.}
\end{equation*}

(ii) If $T_{k}$ is an $A$-isometry for each $k=1,2,...,n$, then%
\begin{equation*}
\left\Vert \mathbf{TS}\right\Vert _{A_{\alpha ,\beta }}\leq \sqrt{\frac{%
4\alpha n}{\alpha +\beta }+1}\left\Vert \mathbf{S}\right\Vert _{A_{\alpha
,\beta }}\text{.}
\end{equation*}
\end{theorem}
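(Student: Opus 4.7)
The plan is to bound $\|\mathbf{TS}\|_{A_{\alpha,\beta}}^{2}$ via the crude splitting already exploited in the proof of Theorem \ref{TH1}(i), namely
\begin{equation*}
\|\mathbf{TS}\|_{A_{\alpha,\beta}}^{2} \;\leq\; \alpha\,\omega_{A}^{2}(\mathbf{TS}) + \beta\,\|\mathbf{TS}\|_{A}^{2},
\end{equation*}
and then bound the two summands separately using the product lemmas available in Section \ref{sec2} (Lemmas \ref{L5}, \ref{L66}, \ref{L7}) together with the two-sided equivalences from Theorem \ref{TH1}.

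For part (i), under the commutativity assumption $\mathbf{TS}=\mathbf{ST}$, I would first apply Lemma \ref{L66} to get $\omega_{A}^{2}(\mathbf{TS}) \leq 4n\,\omega_{A}^{2}(\mathbf{T})\omega_{A}^{2}(\mathbf{S})$, and Lemma \ref{L5} to get $\|\mathbf{TS}\|_{A}^{2} \leq \|\mathbf{T}\|_{A}^{2}\|\mathbf{S}\|_{A}^{2}$. I would then translate each factor back into the $A_{\alpha,\beta}$-seminorm via Theorem \ref{TH1}: the lower bound $\sqrt{\alpha+\beta}\,\omega_{A}(\mathbf{T}) \leq \|\mathbf{T}\|_{A_{\alpha,\beta}}$ yields $\omega_{A}^{2}(\mathbf{T})\omega_{A}^{2}(\mathbf{S}) \leq (\alpha+\beta)^{-2}\|\mathbf{T}\|_{A_{\alpha,\beta}}^{2}\|\mathbf{S}\|_{A_{\alpha,\beta}}^{2}$, while $\sqrt{\beta}\,\|\mathbf{T}\|_{A} \leq \|\mathbf{T}\|_{A_{\alpha,\beta}}$ yields $\|\mathbf{T}\|_{A}^{2}\|\mathbf{S}\|_{A}^{2} \leq \beta^{-2}\|\mathbf{T}\|_{A_{\alpha,\beta}}^{2}\|\mathbf{S}\|_{A_{\alpha,\beta}}^{2}$. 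Adding the two contributions produces the constant $\frac{4\alpha n}{(\alpha+\beta)^{2}} + \frac{1}{\beta}$, and taking square roots finishes (i).

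For part (ii), the $A$-isometry hypothesis lets me invoke Lemma \ref{L7}, which gives simultaneously $\omega_{A}(\mathbf{TS}) \leq \|\mathbf{S}\|_{A}$ and $\|\mathbf{TS}\|_{A} \leq \|\mathbf{S}\|_{A}$. Substituting into the same opening inequality, I would control $\alpha\|\mathbf{S}\|_{A}^{2}$ using Lemma \ref{L4} combined with Theorem \ref{TH1}(i), namely $\|\mathbf{S}\|_{A}^{2} \leq 4n\,\omega_{A}^{2}(\mathbf{S}) \leq \frac{4n}{\alpha+\beta}\|\mathbf{S}\|_{A_{\alpha,\beta}}^{2}$, and control $\beta\|\mathbf{S}\|_{A}^{2}$ through $\|\mathbf{S}\|_{A}^{2} \leq \beta^{-1}\|\mathbf{S}\|_{A_{\alpha,\beta}}^{2}$ from Theorem \ref{TH1}(ii). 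Summing gives the constant $\frac{4\alpha n}{\alpha+\beta}+1$.

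No substantial obstacle is expected; the argument is a careful bookkeeping exercise. The only delicate point is making the right choice of which inequality from Theorem \ref{TH1} to apply to the $\omega_{A}$-term and which to apply to the $\|\cdot\|_{A}$-term so that the resulting constants match the sharper forms stated in the theorem (in particular, not mixing bounds in a way that inflates the coefficient of $\alpha n$). A secondary verification will be that, for part (ii), $\alpha$ and $\beta$ need not both be nonzero separately since the factor $(\alpha+\beta)^{-1}$ already ensures the expression is well defined under the standing assumption $(\alpha,\beta)\neq(0,0)$.
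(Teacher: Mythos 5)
Your proposal is correct and follows essentially the same route as the paper: the same splitting $\left\Vert \mathbf{TS}\right\Vert _{A_{\alpha ,\beta }}^{2}\leq \alpha \omega _{A}^{2}\left( \mathbf{TS}\right) +\beta \left\Vert \mathbf{TS}\right\Vert _{A}^{2}$, followed by Lemmas \ref{L66} and \ref{L5} for part (i), Lemma \ref{L7} and Lemma \ref{L4} for part (ii), and the same conversions via Theorem \ref{TH1}. The only cosmetic difference is that in (ii) the paper bounds $\beta \left\Vert \mathbf{S}\right\Vert _{A}^{2}\leq \left\Vert \mathbf{S}\right\Vert _{A_{\alpha ,\beta }}^{2}$ directly from $\sqrt{\beta }\left\Vert \mathbf{S}\right\Vert _{A}\leq \left\Vert \mathbf{S}\right\Vert _{A_{\alpha ,\beta }}$ rather than dividing by $\beta $, which avoids even the trivial caveat you note when $\beta =0$.
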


\begin{proof}
(i) Let $x\in {\mathcal{H}}$ with $\left\Vert x\right\Vert _{A}=1$. Then, we have
\begin{eqnarray*}
\left\Vert \mathbf{TS}\right\Vert _{A_{\alpha ,\beta }}^{2}
&=&\sup_{\left\Vert x\right\Vert _{A}=1}\sum\limits_{k=1}^{n}\left( \alpha
\left\vert \left\langle T_{k}S_{k}x,x\right\rangle _{A}\right\vert
^{2}+\beta \left\Vert T_{k}S_{k}x\right\Vert _{A}^{2}\right) \\
&\leq &\alpha \omega _{A}^{2}\left( \mathbf{TS}\right) +\beta \left\Vert 
\mathbf{TS}\right\Vert _{A}^{2} \\
&\leq &4\alpha n\omega _{A}^{2}\left( \mathbf{T}\right) \omega
_{A}^{2}\left( \mathbf{S}\right) +\beta \left\Vert \mathbf{T}\right\Vert
_{A}^{2}\left\Vert \mathbf{S}\right\Vert _{A}^{2} \quad
\text{(by Lemma \ref{L66})} \\
&\leq &\frac{4\alpha n}{\left( \alpha +\beta \right) ^{2}}\left\Vert \mathbf{%
T}\right\Vert _{A_{\alpha ,\beta }}^{2}\left\Vert \mathbf{S}\right\Vert
_{A_{\alpha ,\beta }}^{2}+\frac{1}{\beta }\left\Vert \mathbf{T}\right\Vert
_{A_{\alpha ,\beta }}^{2}\left\Vert \mathbf{S}\right\Vert _{A_{\alpha ,\beta
}}^{2} \\
&=&\left( \frac{4\alpha n}{\left( \alpha +\beta \right) ^{2}}+\frac{1}{\beta 
}\right) \left\Vert \mathbf{T}\right\Vert _{A_{\alpha ,\beta
}}^{2}\left\Vert \mathbf{S}\right\Vert _{A_{\alpha ,\beta }}^{2}\text{.}
\end{eqnarray*}
(ii) We also have
\begin{eqnarray*}
\left\Vert \mathbf{TS}\right\Vert _{A_{\alpha ,\beta }}^{2}
&\leq &\alpha \omega _{A}^{2}\left( \mathbf{TS}\right) +\beta \left\Vert 
\mathbf{TS}\right\Vert _{A}^{2} 
\leq \alpha \left\Vert \mathbf{S}\right\Vert _{A}^{2}+\beta \left\Vert 
\mathbf{S}\right\Vert _{A}^{2} \quad
\text{(by Lemma \ref{L7})} \\
&\leq &4\alpha n\omega _{A}^{2}\left( \mathbf{S}\right) +\beta \left\Vert 
\mathbf{S}\right\Vert _{A}^{2} 
\leq \frac{4\alpha n}{\alpha +\beta }\left\Vert \mathbf{S}\right\Vert
_{A_{\alpha ,\beta }}^{2}+\left\Vert \mathbf{S}\right\Vert _{A_{\alpha
,\beta }}^{2}\\ 
&=& \left( \frac{4\alpha n}{\alpha +\beta }+1\right) \left\Vert \mathbf{S}%
\right\Vert _{A_{\alpha ,\beta }}^{2}\text{.}
\end{eqnarray*}
\end{proof}

Next, we obtain a lower bound that
extends \cite[Theorem 2.7]{Sain}.

\begin{theorem}
Let $\mathbf{T}=\left( T_{1},\cdots ,T_{n}\right) \in {\mathcal{B}}%
_{A}\left( {\mathcal{H}}\right) ^{n}$ be an $n$-tuple. Then
\begin{equation*}
\left\Vert \mathbf{T}\right\Vert _{A_{\alpha ,\beta }}\geq \max \left\{ 
\sqrt{\alpha \omega _{A}^{2}\left( \mathbf{T}\right) +\beta m_{A}^{2}\left( 
\mathbf{T}\right) },\sqrt{\alpha c_{A}^{2}\left( 
\mathbf{T}\right) +\beta \left\Vert \mathbf{T}\right\Vert _{A}^{2}}\right\} 
\text{,}
\end{equation*}
where $m_{A}\left(
\mathbf{T}\right)= \inf  \left\{ \left( \sum\limits_{k=1}^{n} \left\langle T_{k}^{\sharp
_{A}}T_{k}x,x\right\rangle _{A} \right)^{1/2}: x\in \mathcal{H},\, \|x\|_A=1  \right\}$.
\end{theorem}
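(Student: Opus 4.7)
The plan is to unpack both sides of the inequality and use the defining suprema/infima of the quantities $\omega_A(\mathbf{T})$, $c_A(\mathbf{T})$, $\|\mathbf{T}\|_A$, and $m_A(\mathbf{T})$. The key observation is that
\begin{equation*}
\|\mathbf{T}\|_{A_{\alpha,\beta}}^2 = \sup_{\|x\|_A=1}\left(\alpha \sum_{k=1}^{n}|\langle T_k x,x\rangle_A|^2 + \beta \sum_{k=1}^{n}\|T_k x\|_A^2\right),
\end{equation*}
and since $\langle T_k^{\sharp_A}T_k x,x\rangle_A=\|T_k x\|_A^2$, the quantity $m_A^2(\mathbf{T})$ is precisely $\inf_{\|x\|_A=1}\sum_{k=1}^n \|T_k x\|_A^2$. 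So both terms in the two-argument max are sums of a quantity over which we are taking a sup and a quantity over which we are taking an inf, evaluated at the same unit vector $x$.

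The first bound, $\|\mathbf{T}\|_{A_{\alpha,\beta}}^2\geq \alpha \omega_A^2(\mathbf{T})+\beta m_A^2(\mathbf{T})$, is handled as follows. Given $\varepsilon>0$, choose $x_0\in \mathcal{H}$ with $\|x_0\|_A=1$ such that $\sum_{k=1}^n |\langle T_k x_0,x_0\rangle_A|^2 > \omega_A^2(\mathbf{T})-\varepsilon$. Then
\begin{equation*}
\|\mathbf{T}\|_{A_{\alpha,\beta}}^2 \geq \alpha \sum_{k=1}^n |\langle T_k x_0,x_0\rangle_A|^2 + \beta \sum_{k=1}^n \|T_k x_0\|_A^2 \geq \alpha(\omega_A^2(\mathbf{T})-\varepsilon)+\beta m_A^2(\mathbf{T}),
\end{equation*}
because $\sum_{k=1}^n \|T_k x_0\|_A^2 \geq m_A^2(\mathbf{T})$ by definition of the infimum. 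Letting $\varepsilon\to 0$ gives the desired inequality.

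The second bound, $\|\mathbf{T}\|_{A_{\alpha,\beta}}^2\geq \alpha c_A^2(\mathbf{T})+\beta \|\mathbf{T}\|_A^2$, is entirely symmetric: for $\varepsilon>0$ choose $x_1$ with $\|x_1\|_A=1$ such that $\sum_{k=1}^n \|T_k x_1\|_A^2 > \|\mathbf{T}\|_A^2-\varepsilon$, and use $\sum_{k=1}^n |\langle T_k x_1,x_1\rangle_A|^2 \geq c_A^2(\mathbf{T})$. Taking $\varepsilon\to 0$ finishes the proof, and combining the two bounds yields the maximum. There is essentially no obstacle here — the argument is just the standard trick of evaluating the sup defining $\|\mathbf{T}\|_{A_{\alpha,\beta}}$ at a near-extremizer for one of the two pieces while using the global inf/sup for the other.
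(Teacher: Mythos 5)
Your proof is correct and is essentially the paper's own argument: the paper bounds the infimum-type term pointwise (by $m_A^2(\mathbf{T})$ or $c_A^2(\mathbf{T})$) and then takes the supremum over unit vectors, which is exactly what your $\varepsilon$-near-extremizer phrasing makes explicit. No substantive difference.
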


\begin{proof}
Let $x\in {\mathcal{H}}$ with $\left\Vert x\right\Vert _{A}=1$. Then, we have%
\begin{eqnarray*}
\left\Vert \mathbf{T}\right\Vert _{A_{\alpha ,\beta }}^{2} &\geq
&\sum\limits_{k=1}^{n}\left( \alpha \left\vert \left\langle
T_{k}x,x\right\rangle _{A}\right\vert ^{2}+\beta \left\Vert
T_{k}x\right\Vert _{A}^{2}\right) 
=\sum\limits_{k=1}^{n}\alpha \left\vert \left\langle T_{k}x,x\right\rangle
_{A}\right\vert ^{2}+\sum\limits_{k=1}^{n}\beta \left\langle T_{k}^{\sharp
_{A}}T_{k}x,x\right\rangle _{A} \\
&\geq &\alpha \sum\limits_{k=1}^{n}\left\vert \left\langle
T_{k}x,x\right\rangle _{A}\right\vert ^{2}+ \beta m_{A}^2\left(
\mathbf{T}\right).
\end{eqnarray*}
Taking the supremum over $\left\Vert
x\right\Vert =1$, we get
$\left\Vert \mathbf{T}\right\Vert _{A_{\alpha ,\beta }}^{2}\geq \alpha \omega
_{A}^{2}\left( \mathbf{T}\right) +\beta m_{A}^{2}\left( \mathbf{T}\right) \text{.} $ 
Also, we have 
\begin{eqnarray*}
\left\Vert \mathbf{T}\right\Vert _{A_{\alpha ,\beta }}^{2} &\geq
&\sum\limits_{k=1}^{n}\left( \alpha \left\vert \left\langle
T_{k}x,x\right\rangle _{A}\right\vert ^{2}+\beta \left\Vert
T_{k}x\right\Vert _{A}^{2}\right) 
=\alpha \sum\limits_{k=1}^{n}\left\vert \left\langle T_{k}x,x\right\rangle
_{A}\right\vert ^{2}+\beta \sum\limits_{k=1}^{n}\left\Vert T_{k}x\right\Vert
_{A}^{2} \\
&\geq &\alpha c_{A}^{2}\left( \mathbf{T}\right) +\beta
\sum\limits_{k=1}^{n}\left\Vert T_{k}x\right\Vert _{A}^{2}\text{.}
\end{eqnarray*}
Taking the supremum over $\left\Vert
x\right\Vert _{A}=1$, we get
$\left\Vert \mathbf{T}\right\Vert _{A_{\alpha ,\beta }}^{2}\geq \alpha
c_{A}^{2}\left( \mathbf{T}\right) +\beta \left\Vert \mathbf{T}\right\Vert
_{A}^{2}\text{.}$  
Thus, we conclude 
\begin{equation*}
\left\Vert \mathbf{T}\right\Vert _{A_{\alpha ,\beta }}\geq \max \left\{ 
\sqrt{\alpha \omega _{A}^{2}\left( \mathbf{T}\right) +\beta m_{A}^{2}\left( 
\mathbf{T} \right) },\sqrt{\alpha c_{A}^{2}\left( 
\mathbf{T}\right) +\beta \left\Vert \mathbf{T}\right\Vert _{A}^{2}}\right\} .
\end{equation*}
\end{proof}

Next, using the Cartesian decomposition of an operator, we obtain the following inequality.

\begin{theorem}\label{p--1}
\bigskip Let $\mathbf{T}=\left( T_{1},\cdots ,T_{n}\right) \in {\mathcal{B}}
_{A}\left( {\mathcal{H}}\right) ^{n}$ be a commuting $n$-tuple. Then
\begin{equation*}
\left\Vert \mathbf{T}\right\Vert _{A_{\alpha ,\beta }}^{2}\geq \left\Vert
\sum\limits_{k=1}^{n}\left( \frac{\alpha }{8}T_{k}T_{k}^{\sharp _{A}}+\left( 
\frac{\alpha }{8}+\frac{\beta }{2}\right) T_{k}^{\sharp _{A}}T_{k}\right)
\right\Vert _{A}.
\end{equation*}
\end{theorem}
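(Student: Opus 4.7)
The plan is to combine two elementary one-sided lower bounds on $\|\mathbf{T}\|_{A_{\alpha,\beta}}^{2}$ with a tuple analogue of Kittaneh's inequality and the joint-norm identity from Lemma~\ref{L05}, and then collapse the result by the triangle inequality for $\|\cdot\|_{A}$. Starting from the definition $\|\mathbf{T}\|_{A_{\alpha,\beta}}^{2}=\sup_{\|x\|_{A}=1}\sum_{k=1}^{n}(\alpha|\langle T_{k}x,x\rangle_{A}|^{2}+\beta\|T_{k}x\|_{A}^{2})$, dropping in turn the $\beta$-term and the $\alpha$-term inside the sup yields
\[\|\mathbf{T}\|_{A_{\alpha,\beta}}^{2}\geq\alpha\,\omega_{A}^{2}(\mathbf{T})\quad\text{and}\quad\|\mathbf{T}\|_{A_{\alpha,\beta}}^{2}\geq\beta\,\|\mathbf{T}\|_{A}^{2}.\]
Since $\max(a,b)\geq(a+b)/2$ for non-negative $a,b$, these combine into the averaged inequality $\|\mathbf{T}\|_{A_{\alpha,\beta}}^{2}\geq\tfrac{\alpha}{2}\omega_{A}^{2}(\mathbf{T})+\tfrac{\beta}{2}\|\mathbf{T}\|_{A}^{2}$.

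I would then invoke Lemma~\ref{L05}, giving $\|\mathbf{T}\|_{A}^{2}=\|\sum_{k}T_{k}^{\sharp_{A}}T_{k}\|_{A}$, together with the tuple Kittaneh lower bound
\[\omega_{A}^{2}(\mathbf{T})\geq\tfrac{1}{4}\Bigl\|\sum_{k=1}^{n}(T_{k}T_{k}^{\sharp_{A}}+T_{k}^{\sharp_{A}}T_{k})\Bigr\|_{A},\]
valid for commuting tuples, to turn the averaged inequality into
\[\|\mathbf{T}\|_{A_{\alpha,\beta}}^{2}\geq\tfrac{\alpha}{8}\Bigl\|\sum_{k=1}^{n}(T_{k}T_{k}^{\sharp_{A}}+T_{k}^{\sharp_{A}}T_{k})\Bigr\|_{A}+\tfrac{\beta}{2}\Bigl\|\sum_{k=1}^{n}T_{k}^{\sharp_{A}}T_{k}\Bigr\|_{A}.\]
The triangle inequality $\|X\|_{A}+\|Y\|_{A}\geq\|X+Y\|_{A}$, applied with $X=\tfrac{\alpha}{8}\sum_{k}(T_{k}T_{k}^{\sharp_{A}}+T_{k}^{\sharp_{A}}T_{k})$ and $Y=\tfrac{\beta}{2}\sum_{k}T_{k}^{\sharp_{A}}T_{k}$, then collapses the right-hand side to $\bigl\|\sum_{k}[\tfrac{\alpha}{8}T_{k}T_{k}^{\sharp_{A}}+(\tfrac{\alpha}{8}+\tfrac{\beta}{2})T_{k}^{\sharp_{A}}T_{k}]\bigr\|_{A}$, which is precisely the claimed inequality.

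The principal obstacle is the tuple Kittaneh lower bound used above. The single-operator version $\omega_{A}^{2}(T)\geq\tfrac{1}{4}\|T^{\sharp_{A}}T+TT^{\sharp_{A}}\|_{A}$ is standard, but its sum-over-$k$ analogue genuinely needs the commuting hypothesis: for instance, the Pauli triple in $\mathcal{B}(\mathbb{C}^{2})$ gives $\omega^{2}(\mathbf{T})=1$ while $\tfrac{1}{4}\|\sum_{k}(\sigma_{k}^{\ast}\sigma_{k}+\sigma_{k}\sigma_{k}^{\ast})\|=3/2$, so the inequality fails there. To prove it under $T_{i}T_{j}=T_{j}T_{i}$, I would Cartesian-decompose each $T_{k}=B_{k}+iC_{k}$ with $A$-self-adjoint $B_{k},C_{k}$, use the identities $T_{k}T_{k}^{\sharp_{A}}+T_{k}^{\sharp_{A}}T_{k}=2(B_{k}^{2}+C_{k}^{2})$ and $|\langle T_{k}x,x\rangle_{A}|^{2}=\langle B_{k}x,x\rangle_{A}^{2}+\langle C_{k}x,x\rangle_{A}^{2}$ together with $\omega_{A}(B_{k}),\omega_{A}(C_{k})\leq\omega_{A}(\mathbf{T})$, and exploit the structural constraints on the Cartesian parts that follow from commutativity (namely $[B_{i},B_{j}]=[C_{i},C_{j}]$ and $[B_{i},C_{j}]=[B_{j},C_{i}]$) to control $\sup_{\|x\|_{A}=1}\sum_{k}(\|B_{k}x\|_{A}^{2}+\|C_{k}x\|_{A}^{2})$ by $2\omega_{A}^{2}(\mathbf{T})$. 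This joint control of Cartesian parts is where the commuting hypothesis is used in an essential way, and it is the technically hardest step of the argument.
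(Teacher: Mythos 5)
Your top-level assembly is fine and ends up where the paper ends up: the two trivial bounds $\left\Vert \mathbf{T}\right\Vert _{A_{\alpha ,\beta }}^{2}\geq \alpha \omega _{A}^{2}(\mathbf{T})$ and $\left\Vert \mathbf{T}\right\Vert _{A_{\alpha ,\beta }}^{2}\geq \beta \left\Vert \mathbf{T}\right\Vert _{A}^{2}$, averaged, combined with Lemma \ref{L05} and the triangle inequality for $\left\Vert \cdot \right\Vert _{A}$, do reduce the theorem to the commuting-tuple Kittaneh-type bound
\begin{equation*}
\omega _{A}^{2}(\mathbf{T})\geq \frac{1}{4}\Bigl\Vert \sum_{k=1}^{n}\bigl(T_{k}T_{k}^{\sharp _{A}}+T_{k}^{\sharp _{A}}T_{k}\bigr)\Bigr\Vert _{A},
\end{equation*}
and your Pauli example correctly shows this cannot hold without a hypothesis like commutativity. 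But this bound is the entire content of the theorem, and you do not prove it; you only sketch a strategy, and the sketch is essentially circular. Since each $B_{k}=\Re _{A}(T_{k})$, $C_{k}=\Im _{A}(T_{k})$ satisfies $AB_{k}=B_{k}^{\ast }A$, one has $\left\Vert B_{k}x\right\Vert _{A}^{2}=\langle B_{k}^{2}x,x\rangle _{A}$, hence $\sum_{k}(\left\Vert B_{k}x\right\Vert _{A}^{2}+\left\Vert C_{k}x\right\Vert _{A}^{2})=\tfrac{1}{2}\langle \sum_{k}(T_{k}T_{k}^{\sharp _{A}}+T_{k}^{\sharp _{A}}T_{k})x,x\rangle _{A}$, and the supremum of the right-hand side over $\left\Vert x\right\Vert _{A}=1$ is exactly $\tfrac{1}{2}\bigl\Vert \sum_{k}(T_{k}T_{k}^{\sharp _{A}}+T_{k}^{\sharp _{A}}T_{k})\bigr\Vert _{A}$ (the operator being $A$-positive). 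So the step you defer to ("control $\sup_{\left\Vert x\right\Vert _{A}=1}\sum_{k}(\left\Vert B_{k}x\right\Vert _{A}^{2}+\left\Vert C_{k}x\right\Vert _{A}^{2})$ by $2\omega _{A}^{2}(\mathbf{T})$") is a verbatim restatement of the inequality you need, and the commutator identities you list give no mechanism for proving it. That is a genuine gap, not a technical loose end.

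For comparison, the paper closes exactly this gap by a different device: from $|\langle T_{k}x,x\rangle _{A}|^{2}=|\langle (\Re _{A}T_{k})^{\sharp _{A}}x,x\rangle _{A}|^{2}+|\langle (\Im _{A}T_{k})^{\sharp _{A}}x,x\rangle _{A}|^{2}\geq \tfrac{1}{2}|\langle S_{k}^{\pm }x,x\rangle _{A}|^{2}$ with $S_{k}^{\pm }=(\Re _{A}T_{k})^{\sharp _{A}}\pm (\Im _{A}T_{k})^{\sharp _{A}}$, it passes to the tuple $\mathbf{S}^{\pm }$, which (using the commutativity of $\mathbf{T}$) is a commuting $A$-normal tuple of $A$-selfadjoint operators; Lemma \ref{L004} then gives the equality $\omega _{A}(\mathbf{S}^{\pm })=\left\Vert \mathbf{S}^{\pm }\right\Vert _{A}$ and Lemma \ref{L05} converts this into $\bigl\Vert \sum_{k}(S_{k}^{\pm })^{2}\bigr\Vert _{A}$, after which averaging over the two signs and the identity $((\Re _{A}T_{k})^{\sharp _{A}})^{2}+((\Im _{A}T_{k})^{\sharp _{A}})^{2}=\bigl(\tfrac{T_{k}T_{k}^{\sharp _{A}}+T_{k}^{\sharp _{A}}T_{k}}{2}\bigr)^{\sharp _{A}}$ yield the Kittaneh-type bound. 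If you want to salvage your write-up, you must either reproduce an argument of this kind (the equality case for commuting $A$-normal tuples is the real engine, and it is where commutativity is genuinely used) or supply some other proof of the displayed bound; also note that in the semi-Hilbertian setting one must work with the $\sharp _{A}$-modified Cartesian parts as the paper does, since $(\Re _{A}T_{k})^{\sharp _{A}}\neq \Re _{A}T_{k}$ in general.
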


\begin{proof}
Let $x\in {\mathcal{H}}$ and $\left\Vert x\right\Vert
_{A}=1$. Then, we see that
\begin{eqnarray*}
\left\vert \left\langle T_{k}x,x\right\rangle _{A}\right\vert ^{2}
&=&\left\vert \left\langle T_{k}^{\sharp _{A}}x,x\right\rangle
_{A}\right\vert ^{2} \\
&=&\left\vert \left\langle \left( \Re _{A}\left( T_{k}\right) \right)
^{\sharp _{A}}x,x\right\rangle _{A}\right\vert ^{2}+\left\vert \left\langle
\left( \Im _{A}\left( T_{k}\right) \right) ^{\sharp _{A}}x,x\right\rangle
_{A}\right\vert ^{2} \\
&\geq &\frac{1}{2}\left\vert \left\langle \left( \left( \Re _{A}\left(
T_{k}\right) \right) ^{\sharp _{A}}\pm \left( \Im _{A}\left( T_{k}\right)
\right) ^{\sharp _{A}}\right) x,x\right\rangle _{A}\right\vert ^{2}\text{.}
\end{eqnarray*}
Therefore, we have
\begin{eqnarray*}
\left\Vert \mathbf{T}\right\Vert _{A_{\alpha ,\beta }}^{2}
&=&\sup_{\left\Vert x\right\Vert _{A}=1}\sum\limits_{k=1}^{n}\left( \alpha
\left\vert \left\langle T_{k}x,x\right\rangle _{A}\right\vert ^{2}+\beta
\left\Vert T_{k}x\right\Vert _{A}^{2}\right) \\
&\geq &\sup_{\left\Vert x\right\Vert _{A}=1}\sum\limits_{k=1}^{n}\alpha
\left\vert \left\langle T_{k}x,x\right\rangle _{A}\right\vert ^{2} \\
&\geq &\frac{\alpha }{2}\sup_{\left\Vert x\right\Vert
_{A}=1}\sum\limits_{k=1}^{n}\left\vert \left\langle \left( \left( \Re
_{A}\left( T_{k}\right) \right) ^{\sharp _{A}}\pm \left( \Im _{A}\left(
T_{k}\right) \right) ^{\sharp _{A}}\right) x,x\right\rangle _{A}\right\vert
^{2} \\
&=&\frac{\alpha }{2}\omega _{A}^{2}\left( \mathbf{S}\right) \text{,}
\end{eqnarray*}
where $\mathbf{S=}\left( \left( \Re _{A}\left( T_{1}\right) \right) ^{\sharp
_{A}}\pm \left( \Im _{A}\left( T_{1}\right) \right) ^{\sharp
_{A}},...,\left( \Re _{A}\left( T_{n}\right) \right) ^{\sharp _{A}}\pm
\left( \Im _{A}\left( T_{n}\right) \right) ^{\sharp _{A}}\right) $. Since $
\mathbf{T}$ is a commuting $n$-tuple, it can be verified
that $\mathbf{S}$ is also commuting $n$-tuple. Moreover, since $
\left( \left( \Re _{A}\left( T_{k}\right) \right) ^{\sharp _{A}}\right)
^{\sharp _{A}}=\left( \Re _{A}\left( T_{k}\right) \right) ^{\sharp _{A}}$
and $\left( \left( \Im _{A}\left( T_{k}\right) \right) ^{\sharp _{A}}\right)
^{\sharp _{A}}=\left( \Im _{A}\left( T_{k}\right) \right) ^{\sharp _{A}}$, it follows that $S_{k}^{\sharp _{A}}=S_{k}$ for all $k=1,2,...,n$,
where $S_{k}:=\left( \Re _{A}\left( T_{k}\right) \right) ^{\sharp _{A}}\pm
\left( \Im _{A}\left( T_{k}\right) \right) ^{\sharp _{A}}$. This implies
that $\mathbf{S}$ is an $A$-normal commuting $n$-tuple. Therefore,
by using Lemma \ref{L004} and Lemma \ref{L05}, we get 
\begin{equation*}
\omega _{A}\left( \mathbf{S}\right) =\left\Vert S\right\Vert _{A}=\sqrt{%
\left\Vert \sum\limits_{k=1}^{n}S_{k}^{\sharp _{A}}S_{k}\right\Vert _{A}}=%
\sqrt{\left\Vert \sum\limits_{k=1}^{n}S_{k}^{2}\right\Vert _{A}}\text{.}
\end{equation*}
Hence, we get
\begin{eqnarray*}
\left\Vert \mathbf{T}\right\Vert _{A_{\alpha ,\beta }}^{2} &\geq &\frac{%
\alpha }{2}\omega _{A}^{2}\left( \mathbf{S}\right) =\frac{\alpha }{2}%
\left\Vert \sum\limits_{k=1}^{n}S_{k}^{2}\right\Vert _{A} 
=\frac{\alpha }{2}\sup_{\left\Vert x\right\Vert _{A}=1}\left\Vert
\sum\limits_{k=1}^{n}S_{k}^{2}x\right\Vert _{A} \\
&\geq &\frac{\alpha }{2}\sup_{\left\Vert x\right\Vert _{A}=1}\left\langle
\sum\limits_{k=1}^{n}\left( \left( \Re _{A}\left( T_{k}\right) \right)
^{\sharp _{A}}\pm \left( \Im _{A}\left( T_{k}\right) \right) ^{\sharp
_{A}}\right) ^{2}x,x\right\rangle _{A} \\
&\geq &\frac{\alpha }{4}\sup_{\left\Vert x\right\Vert _{A}=1}\left\langle
\sum\limits_{k=1}^{n}\left( \left( \Re _{A}\left( T_{k}\right) \right)
^{\sharp _{A}}+\left( \Im _{A}\left( T_{k}\right) \right) ^{\sharp
_{A}}\right) ^{2}+\left( \left( \Re _{A}\left( T_{k}\right) \right) ^{\sharp
_{A}}-\left( \Im _{A}\left( T_{k}\right) \right) ^{\sharp _{A}}\right)
^{2}x,x\right\rangle _{A} \\
&\geq &\frac{\alpha }{2}\sup_{\left\Vert x\right\Vert _{A}=1}\left\langle
\sum\limits_{k=1}^{n}\left( \left( \left( \Re _{A}\left( T_{k}\right)
\right) ^{\sharp _{A}}\right) ^{2}+\left( \left( \Im _{A}\left( T_{k}\right)
\right) ^{\sharp _{A}}\right) ^{2}\right) x,x\right\rangle _{A}\text{.}
\end{eqnarray*}
Also, we have
\begin{eqnarray*}
\left\Vert \mathbf{T}\right\Vert _{A_{\alpha ,\beta }}^{2} &\geq
&\sup_{\left\Vert x\right\Vert _{A}=1}\sum\limits_{k=1}^{n}\beta \left\Vert
T_{k}x\right\Vert _{A}^{2} 
=\sup_{\left\Vert x\right\Vert _{A}=1}\sum\limits_{k=1}^{n}\left\langle
\beta T^{\sharp _{A}}Tx,x\right\rangle _{A}\text{.}
\end{eqnarray*}
Since
\begin{equation*}
\left( \left( \Re _{A}\left( T_{k}\right) \right) ^{\sharp _{A}}\right)
^{2}+\left( \left( \Im _{A}\left( T_{k}\right) \right) ^{\sharp _{A}}\right)
^{2}=\left( \frac{TT^{\sharp _{A}}+T^{\sharp _{A}}T}{2}\right) ^{\sharp _{A}}%
\text{,}
\end{equation*}
and using the above inequalities, we get 
\begin{eqnarray*}
2\left\Vert \mathbf{T}\right\Vert _{A_{\alpha ,\beta }}^{2} &\geq
&\sup_{\left\Vert x\right\Vert _{A}=1}\left\langle
\sum\limits_{k=1}^{n}\left( \frac{\alpha }{2}\left( \left( \left( \Re
_{A}\left( T_{k}\right) \right) ^{\sharp _{A}}\right) ^{2}+\left( \left( \Im
_{A}\left( T_{k}\right) \right) ^{\sharp _{A}}\right) ^{2}\right) +\beta
T_{k}^{\sharp _{A}}T_{k}\right) x,x\right\rangle _{A} \\
&=&\left\Vert \sum\limits_{k=1}^{n}\left( \frac{\alpha }{2}\left( \left(
\left( \Re _{A}\left( T_{k}\right) \right) ^{\sharp _{A}}\right) ^{2}+\left(
\left( \Im _{A}\left( T_{k}\right) \right) ^{\sharp _{A}}\right) ^{2}\right)
+\beta T_{k}^{\sharp _{A}}T_{k}\right) \right\Vert _{A}\text{.} \\
&=&\left\Vert \sum\limits_{k=1}^{n}\left( \frac{\alpha }{2}\left( \frac{%
TT^{\sharp _{A}}+T^{\sharp _{A}}T}{2}\right) ^{\sharp _{A}}+\beta
T_{k}^{\sharp _{A}}T_{k}\right) \right\Vert _{A} \\
&=&\left\Vert \sum\limits_{k=1}^{n}\left( \frac{\alpha }{2}\left( \frac{%
TT^{\sharp _{A}}+T^{\sharp _{A}}T}{2}\right) ^{\sharp _{A}}+\beta \left(
T_{k}^{\sharp _{A}}T_{k}\right) ^{\sharp _{A}}\right) \right\Vert _{A} \\
&=&\left\Vert \sum\limits_{k=1}^{n}\left( \frac{\alpha }{4}TT^{\sharp
_{A}}+\left( \frac{\alpha }{4}+\beta \right) T^{\sharp _{A}}T_{k}\right)
\right\Vert _{A} \quad 
\text{(using }\left\Vert T\right\Vert _{A} =\left\Vert T^{\sharp
_{A}}\right\Vert _{A}.
\end{eqnarray*}
\end{proof}

Setting $n=1$ in Theorem \ref{p--1}, we get
\begin{corollary}
\label{CL} \bigskip Let $T\in {\mathcal{B}}_{A}\left( {\mathcal{H}}\right) $%
. Then%
\begin{equation*}
\left\Vert T\right\Vert _{A_{\alpha ,\beta }}^{2}\geq \left\Vert \left( 
\frac{\alpha }{8}TT^{\sharp _{A}}+\left( \frac{\alpha }{8}+\frac{\beta }{2}%
\right) T^{\sharp _{A}}T\right) \right\Vert _{A}\text{.}
\end{equation*}
\end{corollary}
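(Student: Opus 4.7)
The plan is to deduce Corollary \ref{CL} as an immediate specialization of Theorem \ref{p--1} to the case $n=1$. First, I would observe that any single operator $T \in \mathcal{B}_A(\mathcal{H})$ forms a trivially commuting $1$-tuple $\mathbf{T} = (T)$, so the commutativity hypothesis of Theorem \ref{p--1} is automatically satisfied.

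Second, I would check that both sides of the Theorem \ref{p--1} inequality collapse correctly under $n=1$: in the definition of $\|\mathbf{T}\|_{A_{\alpha,\beta}}$, the sum $\sum_{k=1}^{n}$ reduces to a single term, so $\|\mathbf{T}\|_{A_{\alpha,\beta}}^2 = \|T\|_{A_{\alpha,\beta}}^2$, where $\|T\|_{A_{\alpha,\beta}}$ is as defined in the remark following Theorem \ref{TH2}. Likewise, the right-hand side becomes $\bigl\|\tfrac{\alpha}{8}TT^{\sharp_A} + (\tfrac{\alpha}{8} + \tfrac{\beta}{2})T^{\sharp_A}T\bigr\|_A$, which is exactly the claimed bound.

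There is no substantive obstacle here: the corollary is a pure restatement of the theorem in the single-operator case, and no additional hypothesis on $T$ is needed beyond $T \in \mathcal{B}_A(\mathcal{H})$ (which ensures the $A$-adjoint $T^{\sharp_A}$ exists and the Cartesian decomposition $T = \Re_A(T) + i \Im_A(T)$ used in the proof of Theorem \ref{p--1} is available). If one preferred a self-contained argument, one could simply mimic the proof of Theorem \ref{p--1} in the single-operator setting: expand $|\langle Tx, x \rangle_A|^2 = |\langle \Re_A(T)x,x\rangle_A|^2 + |\langle \Im_A(T)x,x\rangle_A|^2$, apply the elementary bound $a^2 + b^2 \geq \tfrac{1}{2}(a \pm b)^2$, note that $(\Re_A(T))^{\sharp_A} \pm (\Im_A(T))^{\sharp_A}$ is $A$-self-adjoint so its $A$-numerical radius equals its $A$-norm, and combine with the contribution $\beta \|Tx\|_A^2 = \beta \langle T^{\sharp_A} T x, x\rangle_A$. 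Invoking Theorem \ref{p--1} directly is, however, the cleanest route.
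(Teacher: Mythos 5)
Your proposal matches the paper's own proof: the paper obtains Corollary \ref{CL} exactly by setting $n=1$ in Theorem \ref{p--1}, and your observation that a single operator is a trivially commuting $1$-tuple correctly discharges the only hypothesis of that theorem. The specialization of both sides under $n=1$ is handled as you describe, so the argument is complete.
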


\begin{remark}
For $\beta =1-\alpha $ and $0\leq \alpha \leq 1$ in Corollary 
\ref{CL}, we get \cite[Theorem 2.8]{BH1}, namely,
\begin{equation*}
\left\Vert T\right\Vert _{A_{\alpha }}^{2}\geq \left\Vert \left( \frac{%
\alpha }{8}\left( TT^{\sharp _{A}}+T^{\sharp _{A}}T\right) +\frac{1-\alpha }{%
2}T^{\sharp _{A}}T\right) \right\Vert _{A}\text{.}
\end{equation*}
\end{remark}

The next result reads as follows.

\begin{theorem}
\label{TH5}\bigskip Let $\mathbf{T}=\left( T_{1},\cdots ,T_{n}\right) \in {
\mathcal{B}}_{A}\left( {\mathcal{H}}\right) ^{n}$ be an $n$-tuple. Then
\begin{equation*}
\left\Vert \mathbf{T}\right\Vert _{A_{\alpha ,\beta }}^{2}\leq \sqrt{n}
\omega _{A}\left( \alpha \mathbf{T} \mathbf{T}^{\sharp _{A}}+\beta \mathbf{T}
^{\sharp _{A}} \mathbf{T}\right) \text{,} \quad \textit{where } \mathbf{T}^{\sharp _{A}}= \left( T_{1}^{\sharp_A},\cdots ,T_{n}^{\sharp_A}\right).
\end{equation*}
\end{theorem}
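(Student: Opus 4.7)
The plan is to bound $\|\mathbf{T}\|_{A_{\alpha,\beta}}^{2}$ pointwise on unit vectors by a sum of $A$-inner products $\langle (\alpha T_{k}T_{k}^{\sharp_{A}}+\beta T_{k}^{\sharp_{A}}T_{k})x,x\rangle_{A}$, and then convert this $\ell^{1}$-type sum into the $\ell^{2}$-type quantity $\omega_{A}(\alpha \mathbf{T}\mathbf{T}^{\sharp_{A}}+\beta \mathbf{T}^{\sharp_{A}}\mathbf{T})$ by the standard Cauchy--Schwarz inequality $\sum_{k}a_{k}\le\sqrt{n}\bigl(\sum_{k}a_{k}^{2}\bigr)^{1/2}$ applied to nonnegative scalars.

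The key algebraic identities I will use are $\|T_{k}x\|_{A}^{2}=\langle T_{k}^{\sharp_{A}}T_{k}x,x\rangle_{A}$ and $\|T_{k}^{\sharp_{A}}x\|_{A}^{2}=\langle T_{k}T_{k}^{\sharp_{A}}x,x\rangle_{A}$, together with the $A$-Cauchy--Schwarz bound on $\langle T_{k}x,x\rangle_{A}$. Specifically, since $\langle T_{k}x,x\rangle_{A}=\langle x,T_{k}^{\sharp_{A}}x\rangle_{A}$, Cauchy--Schwarz in the semi-inner product gives
\[
|\langle T_{k}x,x\rangle_{A}|^{2}\le \|x\|_{A}^{2}\,\|T_{k}^{\sharp_{A}}x\|_{A}^{2}=\langle T_{k}T_{k}^{\sharp_{A}}x,x\rangle_{A}
\]
for $\|x\|_{A}=1$. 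Combining with $\|T_{k}x\|_{A}^{2}=\langle T_{k}^{\sharp_{A}}T_{k}x,x\rangle_{A}$, the integrand defining $\|\mathbf{T}\|_{A_{\alpha,\beta}}^{2}$ satisfies
\[
\alpha|\langle T_{k}x,x\rangle_{A}|^{2}+\beta\|T_{k}x\|_{A}^{2}\le \langle(\alpha T_{k}T_{k}^{\sharp_{A}}+\beta T_{k}^{\sharp_{A}}T_{k})x,x\rangle_{A}.
\]

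Summing over $k$ and using the fact that each summand $a_{k}:=\langle(\alpha T_{k}T_{k}^{\sharp_{A}}+\beta T_{k}^{\sharp_{A}}T_{k})x,x\rangle_{A}$ is nonnegative (since $T_{k}T_{k}^{\sharp_{A}}$ and $T_{k}^{\sharp_{A}}T_{k}$ are $A$-positive), I apply the elementary inequality $\sum_{k=1}^{n}a_{k}\le \sqrt{n}\bigl(\sum_{k=1}^{n}a_{k}^{2}\bigr)^{1/2}$ to obtain
\[
\sum_{k=1}^{n}\bigl(\alpha|\langle T_{k}x,x\rangle_{A}|^{2}+\beta\|T_{k}x\|_{A}^{2}\bigr)\le \sqrt{n}\Bigl(\sum_{k=1}^{n}\bigl|\langle(\alpha T_{k}T_{k}^{\sharp_{A}}+\beta T_{k}^{\sharp_{A}}T_{k})x,x\rangle_{A}\bigr|^{2}\Bigr)^{1/2}.
\]
Taking the supremum over $x\in\mathcal{H}$ with $\|x\|_{A}=1$ on both sides yields precisely $\|\mathbf{T}\|_{A_{\alpha,\beta}}^{2}\le \sqrt{n}\,\omega_{A}(\alpha\mathbf{T}\mathbf{T}^{\sharp_{A}}+\beta\mathbf{T}^{\sharp_{A}}\mathbf{T})$, which is the claimed bound.

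There is no serious obstacle in this argument; the only thing to watch is the choice of which Cauchy--Schwarz bound to apply to $|\langle T_{k}x,x\rangle_{A}|^{2}$ so that the resulting expression matches the specific combination $\alpha T_{k}T_{k}^{\sharp_{A}}+\beta T_{k}^{\sharp_{A}}T_{k}$ appearing in the statement. Using $|\langle T_{k}x,x\rangle_{A}|^{2}\le \|T_{k}^{\sharp_{A}}x\|_{A}^{2}$ (rather than the alternative $\le\|T_{k}x\|_{A}^{2}$) is what produces the $T_{k}T_{k}^{\sharp_{A}}$ factor with coefficient $\alpha$, while $\beta\|T_{k}x\|_{A}^{2}$ naturally gives the $T_{k}^{\sharp_{A}}T_{k}$ factor. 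Positivity of the resulting $A$-inner products is what legitimizes the final scalar Cauchy--Schwarz step, and $\mathbf{T}\in \mathcal{B}_{A}(\mathcal{H})^{n}$ is used precisely to ensure that $T_{k}^{\sharp_{A}}$ exists and that these products are $A$-positive operators.
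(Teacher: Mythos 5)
Your proposal is correct and follows essentially the same route as the paper: bound $\alpha|\langle T_kx,x\rangle_A|^2$ by $\alpha\langle T_kT_k^{\sharp_A}x,x\rangle_A$ via the $A$-Cauchy--Schwarz inequality applied to $\langle x,T_k^{\sharp_A}x\rangle_A$, rewrite $\beta\|T_kx\|_A^2$ as $\beta\langle T_k^{\sharp_A}T_kx,x\rangle_A$, and then convert the resulting sum into the $A$-Euclidean operator radius using $\sum_k a_k\le\sqrt{n}\bigl(\sum_k a_k^2\bigr)^{1/2}$ before taking the supremum. No gaps.
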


\begin{proof}
Let $x\in {\mathcal{H}}$ with $\left\Vert x\right\Vert _{A}=1$. By using the 
$A$-Cauchy-Schwarz inequality, we get%
\begin{eqnarray*}
\left\Vert \mathbf{T}\right\Vert _{A_{\alpha ,\beta }}^{2}
&=&\sup_{\left\Vert x\right\Vert _{A}=1}\sum\limits_{k=1}^{n}\left( \alpha
\left\vert \left\langle T_{k}x,x\right\rangle _{A}\right\vert ^{2}+\beta
\left\Vert T_{k}x\right\Vert _{A}^{2}\right) \\
&=&\sup_{\left\Vert x\right\Vert _{A}=1}\left( \sum\limits_{k=1}^{n}\left(
\alpha \left\vert \left\langle x,T_{k}^{\sharp _{A}}x\right\rangle
_{A}\right\vert ^{2}+\beta \left\Vert T_{k}x\right\Vert _{A}^{2}\right)
\right) \\
&\leq &\sup_{\left\Vert x\right\Vert _{A}=1}\left(
\sum\limits_{k=1}^{n}\left( \alpha \left\Vert T_{k}^{\sharp
_{A}}x\right\Vert _{A}^{2}+\beta \left\Vert T_{k}x\right\Vert
_{A}^{2}\right) \right) \\
&=&\sup_{\left\Vert x\right\Vert _{A}=1}\left( \sum\limits_{k=1}^{n}\left(
\alpha \left\langle T_{k}T_{k}^{\sharp _{A}}x,x\right\rangle _{A}+\beta
\left\langle T_{k}^{\sharp _{A}}T_{k}x,x\right\rangle _{A}\right) \right) \\
&=&\sup_{\left\Vert x\right\Vert _{A}=1}\left(
\sum\limits_{k=1}^{n}\left\langle \left( \alpha T_{k}T_{k}^{\sharp
_{A}}+\beta T_{k}^{\sharp _{A}}T_{k}\right) x,x\right\rangle _{A}\right) \\
&\leq &\sqrt{n}\sup_{\left\Vert x\right\Vert _{A}=1}\left(
\sum\limits_{k=1}^{n}\left\vert \left\langle \left( \alpha
T_{k}T_{k}^{\sharp _{A}}+\beta T_{k}^{\sharp _{A}}T_{k}\right)
x,x\right\rangle _{A}\right\vert ^{2}\right) ^{{1}/{2}} 
\text{(by } \text{Cauchy--Schwarz)} \\
&=&\sqrt{n}\omega _{A}\left( \alpha \mathbf{T}^{\sharp _{A}}\mathbf{T}+\beta 
\mathbf{TT}^{\sharp _{A}}\right) \text{.}
\end{eqnarray*}
\end{proof}

Using Theorem \ref{TH5}, we deduce the following two corollaries.

\begin{corollary}
Let $\mathbf{T}=\left( T_{1},\cdots ,T_{n}\right) \in {\mathcal{B}}%
_{A}\left( {\mathcal{H}}\right) ^{n}$ be an $n$-tuple. Then
\begin{equation*}
\omega _{A}^{2}\left( \mathbf{T}\right) \leq \inf\limits_{\alpha ,\beta }%
\frac{\sqrt{n}}{\alpha +\beta }\omega _{A}\left( \alpha \mathbf{T}^{\sharp
_{A}}\mathbf{T}+\beta \mathbf{TT}^{\sharp _{A}}\right) \text{.}
\end{equation*}
\end{corollary}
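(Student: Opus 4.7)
The plan is to chain together the two bounds already established, namely the lower bound on $\|\mathbf{T}\|_{A_{\alpha,\beta}}^2$ in terms of $\omega_A(\mathbf{T})$ from Theorem \ref{TH1}(i) and the upper bound in terms of $\omega_A(\alpha \mathbf{T}\mathbf{T}^{\sharp_A}+\beta \mathbf{T}^{\sharp_A}\mathbf{T})$ from Theorem \ref{TH5}, and then take the infimum over $\alpha,\beta\geq 0$ with $(\alpha,\beta)\neq(0,0)$.

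More concretely, from Theorem \ref{TH1}(i) one has $\sqrt{\alpha+\beta}\,\omega_A(\mathbf{T}) \leq \|\mathbf{T}\|_{A_{\alpha,\beta}}$, so squaring yields
\[
(\alpha+\beta)\,\omega_A^2(\mathbf{T}) \leq \|\mathbf{T}\|_{A_{\alpha,\beta}}^2.
\]
On the other hand, Theorem \ref{TH5} gives
\[
\|\mathbf{T}\|_{A_{\alpha,\beta}}^2 \leq \sqrt{n}\,\omega_A\!\left(\alpha \mathbf{T}\mathbf{T}^{\sharp_A}+\beta \mathbf{T}^{\sharp_A}\mathbf{T}\right).
\]
Combining these two and dividing by $\alpha+\beta>0$ produces
\[
\omega_A^2(\mathbf{T}) \leq \frac{\sqrt{n}}{\alpha+\beta}\,\omega_A\!\left(\alpha \mathbf{T}^{\sharp_A}\mathbf{T}+\beta \mathbf{T}\mathbf{T}^{\sharp_A}\right),
\]
using that $\omega_A(\alpha \mathbf{T}\mathbf{T}^{\sharp_A}+\beta \mathbf{T}^{\sharp_A}\mathbf{T}) = \omega_A(\alpha \mathbf{T}^{\sharp_A}\mathbf{T}+\beta \mathbf{T}\mathbf{T}^{\sharp_A})$ by relabeling (or just that the inequality is symmetric in the roles of $\alpha$ and $\beta$ after such a relabeling). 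Finally, taking the infimum over admissible $(\alpha,\beta)$ yields the claimed bound.

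There is essentially no obstacle here; the only thing to be careful about is the case $\alpha+\beta=0$, which is excluded by the definition of $\|\cdot\|_{A_{\alpha,\beta}}$, so the division is justified on the admissible parameter set. The whole argument is a one-line consequence of the previously proved theorems, and no fresh estimates are needed.
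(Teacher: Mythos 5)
Your proof is correct and is essentially the paper's own argument: the paper likewise combines Theorem \ref{TH1}(i) with Theorem \ref{TH5} and takes the infimum over $\alpha,\beta$. Your extra remark about swapping the roles of $\alpha$ and $\beta$ (justified since $\alpha+\beta$ is symmetric and the infimum runs over all admissible pairs) simply patches a notational slippage in the ordering of $\mathbf{T}^{\sharp_A}\mathbf{T}$ and $\mathbf{T}\mathbf{T}^{\sharp_A}$ that is already present in the paper itself.
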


\begin{proof}
Using Theorem \ref{TH1} (i), i.e., $\omega _{A}\left( \mathbf{T}\right)
\leq \frac{1}{\sqrt{\alpha +\beta }}\left\Vert \mathbf{T}\right\Vert
_{A_{\alpha ,\beta }}$, we obtain
\begin{equation*}
\omega _{A}^{2}\left( \mathbf{T}\right) \leq \frac{\sqrt{n}}{\alpha +\beta } \omega _{A}\left( \alpha \mathbf{T}^{\sharp _{A}}\mathbf{T}+\beta \mathbf{TT} ^{\sharp _{A}}\right) \text{}
\end{equation*}
and then taking the infimum over $\alpha ,\beta $, we get the desired result.
\end{proof}

Setting $n=1$, we get the $A$-numerical radius bound, which is a refinement of (\ref{2}).

\begin{corollary}
Let $T\in {\mathcal{B}}_{A}\left( {\mathcal{H}}\right) $. Then
\begin{eqnarray*}
\omega _{A}^{2}\left( T\right) &\leq &\inf\limits_{\alpha ,\beta }\frac{1}{
\alpha +\beta }\left\Vert \alpha T^{\sharp _{A}}T+\beta TT^{\sharp
_{A}}\right\Vert _{A} 
\leq \frac{1}{2}\left\Vert T^{\sharp _{A}}T+TT^{\sharp _{A}}\right\Vert
_{A}\text{.}
\end{eqnarray*}
\end{corollary}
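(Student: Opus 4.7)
The plan is to set $n=1$ in the preceding corollary (the one stating $\omega_A^2(\mathbf{T}) \leq \inf_{\alpha,\beta}\frac{\sqrt{n}}{\alpha+\beta}\omega_A(\alpha \mathbf{T}^{\sharp_A}\mathbf{T}+\beta \mathbf{T}\mathbf{T}^{\sharp_A})$) and then convert the $A$-numerical radius on the right into the $A$-operator seminorm. Directly applying it yields
\begin{equation*}
\omega_A^2(T) \leq \inf_{\alpha,\beta}\frac{1}{\alpha+\beta}\,\omega_A(\alpha T^{\sharp_A}T + \beta TT^{\sharp_A}).
\end{equation*}

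The first key observation is that for $\alpha,\beta \geq 0$ with $(\alpha,\beta)\neq(0,0)$, the operator $\alpha T^{\sharp_A}T + \beta TT^{\sharp_A}$ is $A$-self-adjoint (in fact $A$-positive), since both $T^{\sharp_A}T$ and $TT^{\sharp_A}$ are $A$-self-adjoint as recalled in the preliminaries. For any $A$-self-adjoint operator $R \in \mathcal{B}_A(\mathcal{H})$, the excerpt records that $\omega_A(R) = \|R\|_A$. Applying this with $R = \alpha T^{\sharp_A}T + \beta TT^{\sharp_A}$ replaces $\omega_A$ by $\|\cdot\|_A$ inside the infimum, producing the first inequality of the statement.

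For the second inequality, it suffices to specialize $\alpha = \beta$ (any common positive value), so that
\begin{equation*}
\frac{1}{\alpha+\beta}\bigl\|\alpha T^{\sharp_A}T + \beta TT^{\sharp_A}\bigr\|_A = \frac{1}{2}\bigl\|T^{\sharp_A}T + TT^{\sharp_A}\bigr\|_A,
\end{equation*}
and this particular choice bounds the infimum from above.

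I do not anticipate any real obstacle here: both steps are routine, and the only thing worth flagging is the justification of the equality $\omega_A(R) = \|R\|_A$ used for the $A$-self-adjoint operator $\alpha T^{\sharp_A}T + \beta TT^{\sharp_A}$, which is immediate from the facts listed at the beginning of the paper.
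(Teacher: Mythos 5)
Your proposal is correct and follows exactly the route the paper intends: specialize the preceding corollary to $n=1$, use that $\alpha T^{\sharp_A}T+\beta TT^{\sharp_A}$ is $A$-positive (hence $A$-self-adjoint) so that $\omega_A$ coincides with $\|\cdot\|_A$, and bound the infimum by the choice $\alpha=\beta$.
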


 We now obtain an inequality for the new seminorm, and from which we deduce related $A$-Euclidean operator radius inequalities.

\begin{theorem}
\label{TH6}\bigskip\ Let $\mathbf{T}=\left( T_{1},\cdots ,T_{n}\right) \in {%
\mathcal{B}}_{A}\left( {\mathcal{H}}\right) ^{n}$ be an $n$-tuple. Then
\begin{equation*}
\left\Vert \mathbf{T}\right\Vert _{A_{\alpha ,\beta }}^{2}\leq \left\Vert
\sum\limits_{k=1}^{n}\left( \left( \frac{\alpha }{2}+\beta \right)
T_{k}^{\sharp _{A}}T_{k}+\frac{\alpha }{2}T_{k}T_{k}^{\sharp _{A}}\right)
\right\Vert _{A}\text{.}
\end{equation*}
\end{theorem}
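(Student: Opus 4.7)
The plan is to dominate the integrand $\alpha\,|\langle T_k x,x\rangle_A|^{2}+\beta\,\|T_k x\|_A^{2}$ pointwise by $\langle \mathbf{S}x,x\rangle_A$ for a suitable $A$-positive operator $\mathbf{S}$, and then pass to the supremum. The key is to bound $|\langle T_k x,x\rangle_A|^{2}$ in a way that contributes symmetrically to $T_k^{\sharp_A}T_k$ and $T_k T_k^{\sharp_A}$, so that after adding the $\beta\|T_k x\|_A^{2}$ piece the coefficients $\tfrac{\alpha}{2}+\beta$ and $\tfrac{\alpha}{2}$ of the claimed right-hand side emerge automatically.

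For this, I would invoke the $A$-Cauchy--Schwarz inequality twice, exploiting both $\langle T_k x,x\rangle_A=\langle x,T_k^{\sharp_A}x\rangle_A$ and its original form, to obtain, for $\|x\|_A=1$,
\[
|\langle T_k x,x\rangle_A|^{2} \le \|T_k x\|_A\,\|T_k^{\sharp_A}x\|_A,
\]
and then apply the AM--GM inequality to split this as $\tfrac12\|T_k x\|_A^{2}+\tfrac12\|T_k^{\sharp_A}x\|_A^{2}$. Rewriting these two $A$-norms as $\langle T_k^{\sharp_A}T_k x,x\rangle_A$ and $\langle T_k T_k^{\sharp_A}x,x\rangle_A$ respectively, and then adding $\beta\|T_k x\|_A^{2}=\beta\langle T_k^{\sharp_A}T_k x,x\rangle_A$, I recover exactly
\[
\alpha\,|\langle T_k x,x\rangle_A|^{2}+\beta\,\|T_k x\|_A^{2} \le \Bigl\langle\bigl((\tfrac{\alpha}{2}+\beta)T_k^{\sharp_A}T_k+\tfrac{\alpha}{2}T_k T_k^{\sharp_A}\bigr)x,x\Bigr\rangle_A.
\]

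Summing over $k=1,\dots,n$ and writing $\mathbf{S}:=\sum_{k=1}^{n}\bigl((\tfrac{\alpha}{2}+\beta)T_k^{\sharp_A}T_k+\tfrac{\alpha}{2}T_k T_k^{\sharp_A}\bigr)$, the pointwise bound reduces to $\langle \mathbf{S}x,x\rangle_A$. Since the introduction records that each $T_k^{\sharp_A}T_k$ and $T_k T_k^{\sharp_A}$ is $A$-self-adjoint and $A$-positive, and since $\alpha,\beta\ge 0$, the operator $\mathbf{S}$ is again $A$-self-adjoint and $A$-positive. Consequently $\langle \mathbf{S}x,x\rangle_A\ge 0$ for every $x$, and taking the supremum gives
\[
\sup_{\|x\|_A=1}\langle \mathbf{S}x,x\rangle_A=\omega_A(\mathbf{S})=\|\mathbf{S}\|_A,
\]
where the last equality is the $A$-self-adjoint case recalled in the preliminaries. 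This delivers the stated inequality.

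The only delicate point, and really the main obstacle, is the very first step: one must resist using the cruder bound $|\langle T_k x,x\rangle_A|\le\|T_k x\|_A$, which would never produce a $T_k T_k^{\sharp_A}$ term on the right-hand side. The symmetric estimate via the two forms of the $A$-Cauchy--Schwarz is what unlocks the exact coefficients $\tfrac{\alpha}{2}+\beta$ and $\tfrac{\alpha}{2}$; everything afterwards is routine linearity and the standard identification of the supremum with $\|\cdot\|_A$ in the $A$-self-adjoint (positive) case.
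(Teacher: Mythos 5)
Your proposal is correct and follows essentially the same route as the paper's proof: bound $|\langle T_k x,x\rangle_A|^{2}\le\|T_k x\|_A\|T_k^{\sharp_A}x\|_A$ via the $A$-Cauchy--Schwarz inequality applied through $\langle T_kx,x\rangle_A=\langle x,T_k^{\sharp_A}x\rangle_A$, split by AM--GM, rewrite the squared seminorms as $\langle T_k^{\sharp_A}T_kx,x\rangle_A$ and $\langle T_kT_k^{\sharp_A}x,x\rangle_A$, sum, and take the supremum, identifying it with $\omega_A=\|\cdot\|_A$ for the resulting $A$-self-adjoint ($A$-positive) operator. No gaps; this matches the paper's argument step for step.
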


\begin{proof}
Let $x\in {\mathcal{H}}$ with $\left\Vert x\right\Vert _{A}=1$. We have%
\begin{eqnarray*}
\left\vert \left\langle T_{k}x,x\right\rangle _{A}\right\vert ^{2}
&=&\left\vert \left\langle T_{k}x,x\right\rangle _{A}\right\vert \left\vert
\left\langle x,T_{k}^{\sharp _{A}}x\right\rangle _{A}\right\vert \\
&\leq &\left\Vert T_{k}x\right\Vert _{A}\left\Vert T_{k}^{\sharp
_{A}}x\right\Vert _{A} \quad
\text{(by the }A\text{-Cauchy-Schwary inequality)} \\
&=&\left\langle T_{k}^{\sharp _{A}}T_{k}x,x\right\rangle _{A}^{\frac{1}{2}%
}\left\langle T_{k}^{\sharp _{A}}T_{k}x,x\right\rangle _{A}^{{1}/{2}} \\
&\leq &\frac{1}{2}\left( \left\langle T_{k}^{\sharp
_{A}}T_{k}x,x\right\rangle _{A}+\left\langle T_{k}T_{k}^{\sharp
_{A}}x,x\right\rangle _{A}\right) \quad
\text{(by AM-GM inequality)} \\
&=&\frac{1}{2}\left\langle \left( T_{k}^{\sharp
_{A}}T_{k}+T_{k}T_{k}^{\sharp _{A}}\right) x,x\right\rangle \text{.}
\end{eqnarray*}
Also, we have
$\left\Vert T_{k}x\right\Vert _{A}^{2} = \left\langle
T_{k}x,T_{k}x\right\rangle _{A} 
= \left\langle T_{k}^{\sharp _{A}}T_{k}x,x\right\rangle _{A}\text{.}$
Therefore, 
\begin{eqnarray*}
\left\Vert \mathbf{T}\right\Vert _{A_{\alpha ,\beta }}^{2}
&=&\sup_{\left\Vert x\right\Vert _{A}=1}\sum\limits_{k=1}^{n}\left( \alpha
\left\vert \left\langle T_{k}x,x\right\rangle _{A}\right\vert ^{2}+\beta
\left\Vert T_{k}x\right\Vert _{A}^{2}\right) \\
&\leq &\sup_{\left\Vert x\right\Vert _{A}=1}\sum\limits_{k=1}^{n}\left( 
\frac{\alpha }{2}\left\langle \left( T_{k}^{\sharp
_{A}}T_{k}+T_{k}T_{k}^{\sharp _{A}}\right) x,x\right\rangle +\beta
\left\langle T_{k}^{\sharp _{A}}T_{k}x,x\right\rangle \right) \\
&=&\sup_{\left\Vert x\right\Vert _{A}=1}\sum\limits_{k=1}^{n}\left\langle
\left( \left( \frac{\alpha }{2}+\beta \right) T_{k}^{\sharp _{A}}T_{k}+\frac{%
\alpha }{2}T_{k}T_{k}^{\sharp _{A}}\right) x,x\right\rangle _{A} \\
&=&\omega _{A}\left( \sum\limits_{k=1}^{n}\left( \left( \frac{\alpha }{2}%
+\beta \right) T_{k}^{\sharp _{A}}T_{k}+\frac{\alpha }{2}T_{k}T_{k}^{\sharp
_{A}}\right) \right) \\
&=&\left\Vert \sum\limits_{k=1}^{n}\left( \left( \frac{\alpha }{2}+\beta
\right) T_{k}^{\sharp _{A}}T_{k}+\frac{\alpha }{2}T_{k}T_{k}^{\sharp
_{A}}\right) \right\Vert _{A}\text{.}
\end{eqnarray*}
\end{proof}

Using Theorem \ref{TH6}, we deduce a generalization and refinement of \cite[Corollary 3.6]{MOS}.

\begin{corollary}
Let $\mathbf{T}=\left( T_{1},\cdots ,T_{n}\right) \in {\mathcal{B}}
_{A}\left( {\mathcal{H}}\right) ^{n}$ be an $n$-tuple. Then
\begin{eqnarray*}
\omega _{A}\left( \mathbf{T}\right) &\leq &\inf\limits_{\alpha ,\beta }\frac{%
1}{\sqrt{\alpha +\beta }}\left\Vert \sum\limits_{k=1}^{n}\left( \left( \frac{%
\alpha }{2}+\beta \right) T_{k}^{\sharp _{A}}T_{k}+\frac{\alpha }{2}%
T_{k}T_{k}^{\sharp _{A}}\right) \right\Vert _{A}^{{1}/{2}} \\
&\leq &\frac{1}{\sqrt{2}}\left\Vert \sum\limits_{k=1}^{n}\left(
T_{k}^{\sharp _{A}}T_{k}+T_{k}T_{k}^{\sharp _{A}}\right) \right\Vert ^{\frac{%
1}{2}}\text{.}
\end{eqnarray*}
\end{corollary}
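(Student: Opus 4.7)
The plan is to chain two previously established inequalities and then optimize.

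First, by Theorem \ref{TH1}(i), for any admissible pair $(\alpha,\beta)$ with $\alpha+\beta > 0$, we have
\[
\omega_{A}(\mathbf{T}) \leq \frac{1}{\sqrt{\alpha+\beta}} \, \|\mathbf{T}\|_{A_{\alpha,\beta}}.
\]
Second, by Theorem \ref{TH6},
\[
\|\mathbf{T}\|_{A_{\alpha,\beta}} \leq \left\Vert \sum_{k=1}^{n}\left( \Bigl(\tfrac{\alpha}{2}+\beta\Bigr) T_{k}^{\sharp _{A}}T_{k}+\tfrac{\alpha }{2}T_{k}T_{k}^{\sharp _{A}}\right)\right\Vert_{A}^{1/2}.
\]
Substituting the second into the first and then taking the infimum over all admissible $(\alpha,\beta)$ immediately yields the first inequality of the corollary.

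For the second inequality of the corollary, since the middle quantity is an infimum over $(\alpha,\beta)$, it suffices to exhibit a single choice of parameters for which the value equals the claimed upper bound. Observe that choosing $\alpha = 2$ and $\beta = 0$ makes the coefficients of $T_k^{\sharp_A}T_k$ and $T_k T_k^{\sharp_A}$ both equal to $1$, and makes $\frac{1}{\sqrt{\alpha+\beta}} = \frac{1}{\sqrt{2}}$. Plugging these values in gives exactly
\[
\frac{1}{\sqrt{2}}\left\Vert \sum_{k=1}^{n}\bigl(T_{k}^{\sharp _{A}}T_{k}+T_{k}T_{k}^{\sharp _{A}}\bigr)\right\Vert_{A}^{1/2},
\]
so the infimum is no larger than this quantity, which is exactly the desired upper bound.

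There is no substantive obstacle here; the corollary is essentially a routine combination of Theorems \ref{TH1}(i) and \ref{TH6}, together with the observation that the choice $\alpha=2,\,\beta=0$ recovers the symmetric form $T_{k}^{\sharp _{A}}T_{k}+T_{k}T_{k}^{\sharp _{A}}$ appearing in Zamani's bound \eqref{2}. The only mild point worth noting is that $\beta = 0$ is permitted by the definition of $\|\cdot\|_{A_{\alpha,\beta}}$ (we only require $(\alpha,\beta)\neq(0,0)$), so this extremal choice is indeed admissible in the infimum.
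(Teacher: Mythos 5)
Your proof is correct and takes essentially the same route as the paper: both chain Theorem \ref{TH1}(i) with Theorem \ref{TH6} and then specialize the parameters in the infimum to obtain the second inequality. The only cosmetic difference is that you pick $\alpha=2,\ \beta=0$ while the paper picks $\alpha=1,\ \beta=0$; since the quantity $\frac{1}{\sqrt{\alpha+\beta}}\left\Vert \sum_{k=1}^{n}\left( \left(\tfrac{\alpha}{2}+\beta\right) T_{k}^{\sharp _{A}}T_{k}+\tfrac{\alpha }{2}T_{k}T_{k}^{\sharp _{A}}\right)\right\Vert_{A}^{1/2}$ is invariant under the scaling $(\alpha,\beta)\mapsto(t\alpha,t\beta)$, $t>0$, both choices give exactly $\tfrac{1}{\sqrt{2}}\left\Vert \sum_{k=1}^{n}\left(T_{k}^{\sharp _{A}}T_{k}+T_{k}T_{k}^{\sharp _{A}}\right)\right\Vert_{A}^{1/2}$.
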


\begin{proof} The second inequality follows from the case $\alpha =1,\beta =0$.
Following Theorem \ref{TH1} (i) and Theorem \ref{TH6}, it follows that 
\begin{eqnarray*}
\omega _{A}\left( \mathbf{T}\right) &\leq &\frac{1}{\sqrt{\alpha +\beta} }%
\left\Vert \mathbf{T}\right\Vert _{A_{\alpha ,\beta }} 
\leq \frac{1}{\sqrt{\alpha +\beta} }\left\Vert \sum\limits_{k=1}^{n}\left( \left( 
\frac{\alpha }{2}+\beta \right) T_{k}^{\sharp _{A}}T_{k}+\frac{\alpha }{2}%
T_{k}T_{k}^{\sharp _{A}}\right) \right\Vert _{A}^{{1}/{2}}\text{.}
\end{eqnarray*}
Taking the infimum over all $\alpha ,\beta $, we get 
\begin{equation*}
\omega _{A}\left( \mathbf{T}\right) \leq \inf\limits_{\alpha ,\beta }\frac{1%
}{\sqrt{\alpha +\beta }}\left\Vert \sum\limits_{k=1}^{n}\left( \left( \frac{%
\alpha }{2}+\beta \right) T_{k}^{\sharp _{A}}T_{k}+\frac{\alpha }{2}%
T_{k}T_{k}^{\sharp _{A}}\right) \right\Vert _{A}^{{1}/{2}}\text{.}
\end{equation*}
\end{proof}
\begin{remark}
For $n=1$, we get 
\begin{eqnarray*}
\omega _{A}^{2}\left( T\right) &\leq &\inf\limits_{\alpha ,\beta }\frac{1}{
{\alpha +\beta} }\left\Vert \left( \left( \frac{\alpha }{2}+\beta \right)
T^{\sharp _{A}}T+\frac{\alpha }{2}TT^{\sharp _{A}}\right) \right\Vert _{A} 
\leq \frac{1}{2}\left\Vert T^{\sharp _{A}}T+TT^{\sharp _{A}}\right\Vert
_{A}\text{,}
\end{eqnarray*}
which improves (\ref{2}).
\end{remark}

The next inequality is as follows:

\begin{theorem}
\label{TH7}\bigskip Let $\mathbf{T}=\left( T_{1},\cdots ,T_{n}\right) \in {
\mathcal{B}}_{A}\left( {\mathcal{H}}\right) ^{n}$ be an $n$-tuple. Then
\begin{equation*}
\left\Vert \mathbf{T}\right\Vert _{A_{\alpha ,\beta }}^{2}\leq \sqrt{n}%
\left( \omega _{A}\left( \left( \frac{\alpha }{4}+\beta \right) \mathbf{T}%
^{\sharp _{A}}\mathbf{T}+\frac{\alpha }{4}\mathbf{TT}^{\sharp _{A}}\right) +%
\frac{\alpha }{2}\omega _{A}\left( \mathbf{T}^{2}\right) \right) \text{.}
\end{equation*}
\end{theorem}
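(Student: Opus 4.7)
The plan is to mimic Zamani's proof of inequality \eqref{4}, but apply it componentwise and then pass to the $n$-tuple by Cauchy--Schwarz. The statement has the same shape as \eqref{4} with $\|T^{\sharp_A}T+TT^{\sharp_A}\|_A$ replaced by the $A$-Euclidean radius of a weighted $n$-tuple analogue and with an extra $\sqrt{n}$, which is exactly what Cauchy--Schwarz over $n$ terms provides.

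First, I would fix $x\in\mathcal{H}$ with $\|x\|_A=1$ and apply Lemma~\ref{L1} with the substitution $x\mapsto T_kx$, $y\mapsto T_k^{\sharp_A}x$, $z\mapsto x$. The two factors on the left become $\langle T_kx,x\rangle_A$ and $\langle x,T_k^{\sharp_A}x\rangle_A$. Using $AT_k^{\sharp_A}=T_k^*A$ one checks that $\langle x,T_k^{\sharp_A}x\rangle_A=\overline{\langle T_kx,x\rangle_A}$, so the left-hand side becomes $|\langle T_kx,x\rangle_A|^2$. On the right, the mixed term $|\langle T_kx,T_k^{\sharp_A}x\rangle_A|$ equals $|\langle T_k^2x,x\rangle_A|$: using $\langle Tu,v\rangle_A=\langle u,T^{\sharp_A}v\rangle_A$ twice gives $\langle T_kx,T_k^{\sharp_A}x\rangle_A=\langle x,(T_k^{\sharp_A})^2x\rangle_A$, and the identity $A(T_k^{\sharp_A})^2=(T_k^2)^*A$ yields $|\langle (T_k^{\sharp_A})^2x,x\rangle_A|=|\langle T_k^2x,x\rangle_A|$.

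Next, I would apply AM--GM to the remaining product:
\[
\|T_kx\|_A\,\|T_k^{\sharp_A}x\|_A\leq \tfrac12\bigl(\|T_kx\|_A^2+\|T_k^{\sharp_A}x\|_A^2\bigr)=\tfrac12\langle (T_k^{\sharp_A}T_k+T_kT_k^{\sharp_A})x,x\rangle_A,
\]
giving the pointwise bound
\[
|\langle T_kx,x\rangle_A|^2\leq \tfrac14\langle (T_k^{\sharp_A}T_k+T_kT_k^{\sharp_A})x,x\rangle_A+\tfrac12|\langle T_k^2x,x\rangle_A|.
\]
Combining with $\|T_kx\|_A^2=\langle T_k^{\sharp_A}T_kx,x\rangle_A$ and weighting by $\alpha$ and $\beta$, I obtain
\[
\alpha|\langle T_kx,x\rangle_A|^2+\beta\|T_kx\|_A^2\leq \langle S_kx,x\rangle_A+\tfrac{\alpha}{2}|\langle T_k^2x,x\rangle_A|,
\]
where $S_k=\bigl(\tfrac{\alpha}{4}+\beta\bigr)T_k^{\sharp_A}T_k+\tfrac{\alpha}{4}T_kT_k^{\sharp_A}$ is $A$-positive.

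Finally, I would sum over $k=1,\dots,n$ and apply the Cauchy--Schwarz inequality for finite sums to each of the two resulting sums. Since each $S_k$ is $A$-positive, $\langle S_kx,x\rangle_A\geq 0$, so
\[
\sum_{k=1}^{n}\langle S_kx,x\rangle_A=\sum_{k=1}^{n}|\langle S_kx,x\rangle_A|\leq \sqrt{n}\Bigl(\sum_{k=1}^{n}|\langle S_kx,x\rangle_A|^2\Bigr)^{1/2}\leq \sqrt{n}\,\omega_A\bigl(\tbinom{\alpha}{4}\!+\!\beta)\mathbf{T}^{\sharp_A}\mathbf{T}+\tfrac{\alpha}{4}\mathbf{T}\mathbf{T}^{\sharp_A}\bigr),
\]
and similarly $\sum_k|\langle T_k^2x,x\rangle_A|\leq\sqrt{n}\,\omega_A(\mathbf{T}^2)$. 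Taking the supremum over $\|x\|_A=1$ yields the claimed inequality. The only delicate step is the identity $|\langle T_kx,T_k^{\sharp_A}x\rangle_A|=|\langle T_k^2x,x\rangle_A|$; once that is in hand, everything else is Lemma~\ref{L1}, AM--GM, and one round of Cauchy--Schwarz.
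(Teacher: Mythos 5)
Your proposal is correct and follows essentially the same route as the paper: the Buzano-type Lemma~\ref{L1} applied to $T_kx$, $T_k^{\sharp_A}x$, $x$, then AM--GM, the identity $\langle T_kx,T_k^{\sharp_A}x\rangle_A=\langle T_k^2x,x\rangle_A$, and one Cauchy--Schwarz over the $n$ terms producing the factor $\sqrt{n}$ before taking the supremum. (Only a harmless slip: $\langle x,T_k^{\sharp_A}x\rangle_A$ actually equals $\langle T_kx,x\rangle_A$ rather than its conjugate, which is immaterial since only the modulus is used.)
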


\begin{proof}
Let $x\in {\mathcal{H}}$ with $\left\Vert x\right\Vert _{A}=1$. We have 
\begin{eqnarray*}
\left\vert \left\langle T_{k}x,x\right\rangle _{A}\right\vert ^{2}
&=&\left\vert \left\langle T_{k}x,x\right\rangle _{A}\right\vert \left\vert
\left\langle x,T_{k}^{\sharp _{A}}x\right\rangle _{A}\right\vert \\
&\leq &\frac{1}{2}\left( \left\Vert T_{k}x\right\Vert _{A}\left\Vert
T_{k}^{\sharp _{A}}x\right\Vert _{A}+\left\vert \left\langle
T_{k}x,T_{k}^{\sharp _{A}}x\right\rangle _{A}\right\vert \right) \quad
\text{(by Lemma \ref{L1})} \\
&\leq &\frac{1}{4}\left( \left\Vert T_{k}x\right\Vert _{A}^{2}+\left\Vert
T_{k}^{\sharp _{A}}x\right\Vert _{A}^{2}\right) +\frac{1}{2}\left\vert
\left\langle T_{k}^{2}x,x\right\rangle _{A}\right\vert \quad
\text{(by the AM-GM inequality)} \\
&=&\frac{1}{4}\left\langle \left( T_{k}^{\sharp
_{A}}T_{k}+T_{k}T_{k}^{\sharp _{A}}\right) x,x\right\rangle _{A}+\frac{1}{2}%
\left\vert \left\langle T_{k}^{2}x,x\right\rangle _{A}\right\vert \text{.}
\end{eqnarray*}

Therefore, 
\begin{eqnarray*}
&&\sum\limits_{k=1}^{n}\left( \alpha \left\vert \left\langle
T_{k}x,x\right\rangle _{A}\right\vert ^{2}+\beta \left\Vert
T_{k}x\right\Vert _{A}^{2}\right) \\
&\leq &\sum\limits_{k=1}^{n}\left( \left\langle \frac{\alpha }{4}\left(
T_{k}^{\sharp _{A}}T_{k}+T_{k}T_{k}^{\sharp _{A}}\right) x,x\right\rangle
_{A}+\frac{\alpha }{2}\left\vert \left\langle T_{k}^{2}x,x\right\rangle
_{A}\right\vert +\beta \left\langle T_{k}^{\sharp _{A}}T_{k}x,x\right\rangle
_{A}\right) \\
&=&\sum\limits_{k=1}^{n}\left\langle \left( \frac{\alpha }{4}\left(
T_{k}^{\sharp _{A}}T_{k}+T_{k}T_{k}^{\sharp _{A}}\right) +\beta
T_{k}^{\sharp _{A}}T_{k}\right) x,x\right\rangle _{A}+\frac{\alpha }{2}%
\sum\limits_{k=1}^{n}\left\vert \left\langle T_{k}^{2}x,x\right\rangle
_{A}\right\vert \\
&=&\sum\limits_{k=1}^{n}\left\langle \left( \left( \frac{\alpha }{4}+\beta
\right) T_{k}^{\sharp _{A}}T_{k}+\frac{\alpha }{4}T_{k}T_{k}^{\sharp
_{A}}\right) x,x\right\rangle +\frac{\alpha }{2}\sum\limits_{k=1}^{n}\left%
\vert \left\langle T_{k}^{2}x,x\right\rangle \right\vert \\
&\leq &\sqrt{n}\left( \left( \sum\limits_{k=1}^{n}\left\vert \left\langle
\left( \left( \frac{\alpha }{4}+\beta \right) T_{k}^{\sharp _{A}}T_{k}+\frac{%
\alpha }{4}T_{k}T_{k}^{\sharp _{A}}\right) x,x\right\rangle _{A}\right\vert
^{2}\right) ^{{1}/{2}}+\frac{\alpha }{2}\left(
\sum\limits_{k=1}^{n}\left\vert \left\langle T_{k}^{2}x,x\right\rangle
_{A}\right\vert ^{2}\right) ^{{1}/{2}}\right) \\
&&\text{(by the }A\text{-Cauchy-Schwarz inequality)} \\
&\leq &\sqrt{n}\left( \omega _{A}\left( \left( \frac{\alpha }{4}+\beta
\right) \mathbf{T}^{\sharp _{A}}\mathbf{T}+\frac{\alpha }{4}\mathbf{TT}%
^{\sharp _{A}}\right) +\frac{\alpha }{2}\omega _{A}\left( \mathbf{T}%
^{2}\right) \right)\text{.}
\end{eqnarray*}
Taking the supremum over all $x\in {\mathcal{H}}$ with $\left\Vert
x\right\Vert _{A}=1$, we get%
\begin{equation*}
\left\Vert \mathbf{T}\right\Vert _{A_{\alpha ,\beta }}^{2}\leq \sqrt{n}%
\left( \omega _{A}\left( \left( \frac{\alpha }{4}+\beta \right) \mathbf{T}%
^{\sharp _{A}}\mathbf{T}+\frac{\alpha }{4}\mathbf{TT}^{\sharp _{A}}\right) +%
\frac{\alpha }{2}\omega _{A}\left( \mathbf{T}^{2}\right) \right) \text{.}
\end{equation*}
This completes the proof.
\end{proof}

Applying Theorem \ref{TH7}, we derive the following result.

\begin{corollary}
\label{CCC}\bigskip\ Let $\mathbf{T}=\left( T_{1},\cdots ,T_{n}\right) \in {%
\mathcal{B}}_{A}\left( {\mathcal{H}}\right) ^{n}$ be an $n$-tuple. Then
\begin{eqnarray*}
\omega _{A}^{2}\left( \mathbf{T}\right) &\leq &\inf\limits_{\alpha ,\beta }%
\frac{\sqrt{n}}{\alpha +\beta }\left\{ \omega _{A}\left( \left( \frac{\alpha 
}{4}+\beta \right) \mathbf{T}^{\sharp _{A}}\mathbf{T}+\frac{\alpha }{4}%
\mathbf{TT}^{\sharp _{A}}\right) +\left. \frac{\alpha }{2}\omega _{A}\left( 
\mathbf{T}^{2}\right) \right\} \right. \\
&\leq &\sqrt{n}\left( \frac{1}{4}\omega _{A}\left( \mathbf{T}^{\sharp _{A}}%
\mathbf{T}+\mathbf{TT}^{\sharp _{A}}\right) +\frac{1}{2}\omega _{A}\left( 
\mathbf{T}^{2}\right) \right) \text{.}
\end{eqnarray*}
\end{corollary}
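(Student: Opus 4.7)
The plan is to combine the two ingredients already established in the paper: the lower bound on $\|\mathbf{T}\|_{A_{\alpha,\beta}}$ from Theorem \ref{TH1}(i), which yields $\omega_A^2(\mathbf{T}) \leq \frac{1}{\alpha+\beta}\|\mathbf{T}\|_{A_{\alpha,\beta}}^2$, and the upper bound on $\|\mathbf{T}\|_{A_{\alpha,\beta}}^2$ from Theorem \ref{TH7}. Chaining them gives, for each admissible pair $(\alpha,\beta)$ with $(\alpha,\beta)\neq(0,0)$,
\begin{equation*}
\omega_A^2(\mathbf{T}) \leq \frac{\sqrt{n}}{\alpha+\beta}\left\{ \omega_A\!\left(\left(\tfrac{\alpha}{4}+\beta\right)\mathbf{T}^{\sharp_A}\mathbf{T} + \tfrac{\alpha}{4}\mathbf{T}\mathbf{T}^{\sharp_A}\right) + \tfrac{\alpha}{2}\omega_A(\mathbf{T}^2)\right\}.
\end{equation*}
Taking the infimum over all such $(\alpha,\beta)$ on the right-hand side yields the first inequality of the corollary, since the left-hand side does not depend on $(\alpha,\beta)$.

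For the second inequality, I would simply specialize $(\alpha,\beta)$ to a choice that simplifies the coefficients to the target expression. The natural choice is $\alpha=1$, $\beta=0$, for which $\alpha+\beta=1$, $\tfrac{\alpha}{4}+\beta=\tfrac{1}{4}$, $\tfrac{\alpha}{4}=\tfrac{1}{4}$, and $\tfrac{\alpha}{2}=\tfrac{1}{2}$. Pulling the scalar $\tfrac{1}{4}$ out of the first $\omega_A$ term (using the absolute homogeneity of $\omega_A$) gives exactly
\begin{equation*}
\sqrt{n}\left(\tfrac{1}{4}\omega_A(\mathbf{T}^{\sharp_A}\mathbf{T} + \mathbf{T}\mathbf{T}^{\sharp_A}) + \tfrac{1}{2}\omega_A(\mathbf{T}^2)\right),
\end{equation*}
which upper bounds the infimum.

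There is essentially no obstacle here; both inequalities are formal consequences of results established earlier in the paper. The only care needed is to check that $(\alpha,\beta)=(1,0)$ is within the allowed range for Theorem \ref{TH7} (it is, since $\alpha,\beta \geq 0$ and $(\alpha,\beta)\neq(0,0)$), and to verify the scalar homogeneity of $\omega_A$ used when factoring $\tfrac{1}{4}$ out of $\omega_A\!\left(\tfrac{1}{4}\mathbf{T}^{\sharp_A}\mathbf{T}+\tfrac{1}{4}\mathbf{T}\mathbf{T}^{\sharp_A}\right)$, which is immediate from the definition of $\omega_A(\cdot)$.
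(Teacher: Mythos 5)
Your argument is correct and is exactly the route the paper intends: chain Theorem \ref{TH1}(i) with Theorem \ref{TH7}, take the infimum over admissible $(\alpha,\beta)$, and obtain the second inequality by the specialization $\alpha=1$, $\beta=0$ together with the homogeneity of $\omega_A$. No gaps; nothing further is needed.
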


\begin{remark}
From Corollary \ref{CCC}, we have 
if $A\mathbf{T}^{2}=0$ (i.e., $AT_{k}^{2}=0$ for all $k=1,\cdots,n$), then
\begin{equation*}
\omega _{A}^{2}\left( \mathbf{T}\right) \leq \frac{\sqrt{n}}{4}\omega
_{A}\left( \mathbf{T}^{\sharp _{A}}\mathbf{T}+\mathbf{TT}^{\sharp
_{A}}\right) \text{.}
\end{equation*}
\end{remark}

For the case $n=1$ in Corollary \ref{CCC}, we get 

\begin{corollary}
\label{CX}\bigskip\ Let $T\in {\mathcal{B}}_{A}\left( {\mathcal{H}}\right) $%
. Then%
\begin{equation*}
\omega _{A}^{2}\left( T\right) \leq \inf\limits_{\alpha ,\beta }\frac{1}{%
\alpha +\beta }\left\{ \left\Vert \left( \frac{\alpha }{4}+\beta \right)
T^{\sharp _{A}}T+\frac{\alpha }{4}TT^{\sharp _{A}}\right\Vert _{A}\right.
+\left. \frac{\alpha }{2}\omega _{A}\left( T^{2}\right) \right\} \text{.}
\end{equation*}
\end{corollary}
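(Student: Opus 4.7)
The plan is to derive Corollary \ref{CX} as an immediate specialization of Corollary \ref{CCC} to the case $n=1$. First, I would set $n=1$ in the statement of Corollary \ref{CCC}: this makes $\sqrt{n}=1$, turns the $n$-tuple $\mathbf{T}$ into a single operator $T$, and reduces the tuple products $\mathbf{T}^{\sharp_A}\mathbf{T}$, $\mathbf{TT}^{\sharp_A}$, and $\mathbf{T}^2$ to the single operators $T^{\sharp_A}T$, $TT^{\sharp_A}$, and $T^2$, respectively. This already yields
\begin{equation*}
\omega_A^2(T) \;\leq\; \inf_{\alpha,\beta}\frac{1}{\alpha+\beta}\left\{\omega_A\!\left(\bigl(\tfrac{\alpha}{4}+\beta\bigr)T^{\sharp_A}T + \tfrac{\alpha}{4}TT^{\sharp_A}\right) + \tfrac{\alpha}{2}\,\omega_A(T^2)\right\}.
\end{equation*}

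The only remaining step is to replace the $A$-numerical radius term above by the $A$-operator seminorm term that appears in Corollary \ref{CX}. For this I would invoke the standard fact (recalled just after inequality \eqref{00}) that $\omega_A(S)=\|S\|_A$ whenever $S$ is $A$-self-adjoint. Since $T^{\sharp_A}T$ and $TT^{\sharp_A}$ are $A$-self-adjoint (indeed $A$-positive, as listed in the preliminaries), and since $\alpha/4+\beta \ge 0$ and $\alpha/4 \ge 0$ under the standing hypothesis $\alpha,\beta \ge 0$, the operator $\bigl(\tfrac{\alpha}{4}+\beta\bigr)T^{\sharp_A}T + \tfrac{\alpha}{4}TT^{\sharp_A}$ is itself $A$-self-adjoint. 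Applying the identity $\omega_A=\|\cdot\|_A$ to this combination and substituting produces exactly the bound claimed in Corollary \ref{CX}.

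I do not anticipate a genuine obstacle: the infimum over $\alpha,\beta$ is inherited verbatim from Corollary \ref{CCC}, and the only non-trivial ingredient beyond the $n=1$ specialization is the cited identity $\omega_A=\|\cdot\|_A$ on $A$-self-adjoint operators. The one sanity check worth performing is that the coefficients $\alpha/4+\beta$ and $\alpha/4$ remain nonnegative for every admissible pair, which is immediate from $\alpha,\beta\ge 0$ with $(\alpha,\beta)\ne(0,0)$; no sign flip occurs, so the $A$-self-adjointness (hence the numerical-radius-to-seminorm passage) is valid throughout the infimum.
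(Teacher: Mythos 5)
Your proof is correct and follows essentially the same route as the paper, which obtains Corollary \ref{CX} precisely by setting $n=1$ in Corollary \ref{CCC}. Your explicit justification of the passage from $\omega_A$ to $\|\cdot\|_A$ via the $A$-self-adjointness of the nonnegative combination of $T^{\sharp_A}T$ and $TT^{\sharp_A}$ is exactly the (implicit) step the paper relies on.
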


\begin{remark}
The inequality in Corollary \ref{CX} refines the
inequality (\ref{4}). Indeed, we have 
\begin{eqnarray*}
\omega _{A}^{2}\left( T\right) &\leq &\inf\limits_{\alpha ,\beta }\frac{1}{%
\alpha +\beta }\left\{ \left\Vert \left( \frac{\alpha }{4}+\beta \right)
T^{\sharp _{A}}T+\frac{\alpha }{4}TT^{\sharp _{A}}\right\Vert _{A}\right.
+\left. \frac{\alpha }{2}\omega _{A}\left( T^{2}\right) \right\} \\
&\leq &\frac{1}{4}\left\Vert T^{\sharp _{A}}T+TT^{\sharp _{A}}\right\Vert
_{A}+\frac{1}{2}\omega _{A}\left( T^{2}\right) \text{.}
\end{eqnarray*}
\end{remark}

Finally, we prove the following inequality for the new seminorm.

\begin{theorem}
\label{TTTT}\bigskip\ Let $\mathbf{T}=\left( T_{1},\cdots ,T_{n}\right) \in {
\mathcal{B}}_{A}\left( {\mathcal{H}}\right) ^{n}$, $\mathbf{S}=\left(
S_{1},\cdots ,S_{n}\right) $ $\in {\mathcal{B}}_{A}\left( {\mathcal{H}}%
\right) ^{n}$. Then
\begin{equation*}
\left\Vert \mathbf{S}^{\sharp _{A}}\mathbf{T}\right\Vert _{A_{\alpha ,\beta
}}^{2}\leq \frac{\alpha }{2}\left\Vert \sum\limits_{k=1}^{n}\left( \left(
T_{k}^{\sharp _{A}}T_{k}\right) ^{2}+\left( S_{k}^{\sharp _{A}}S_{k}\right)
^{2}\right) \right\Vert _{A}+\beta \sum\limits_{k=1}^{n}\left\Vert
S_{k}^{\sharp _{A}}T_{k}\right\Vert _{A}^{2}\text{.}
\end{equation*}
\end{theorem}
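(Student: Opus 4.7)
The plan is to start from the definition of $\|\mathbf{S}^{\sharp_A}\mathbf{T}\|_{A_{\alpha,\beta}}^2$ as a supremum, split the expression into its $\alpha$-part and $\beta$-part, bound each separately using supremum-of-sum $\le$ sum-of-suprema, and then rewrite the two pieces in the forms appearing on the right-hand side.

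For the $\alpha$-part, I would fix $x \in \mathcal{H}$ with $\|x\|_A = 1$ and, for each $k$, use the identity $\langle S_k^{\sharp_A} T_k x, x\rangle_A = \langle T_k x, S_k x\rangle_A$ followed by the $A$-Cauchy--Schwarz inequality to obtain
\[
|\langle S_k^{\sharp_A} T_k x, x\rangle_A|^2 \le \|T_k x\|_A^2 \|S_k x\|_A^2.
\]
Then AM--GM gives $\|T_k x\|_A^2 \|S_k x\|_A^2 \le \tfrac{1}{2}(\|T_k x\|_A^4 + \|S_k x\|_A^4)$, and the key step is to apply Lemma \ref{L2} to the $A$-positive operators $T_k^{\sharp_A} T_k$ and $S_k^{\sharp_A} S_k$: since $\|T_k x\|_A^2 = \langle T_k^{\sharp_A} T_k x, x\rangle_A$, Lemma \ref{L2} (with $n=2$) yields $\|T_k x\|_A^4 \le \langle (T_k^{\sharp_A} T_k)^2 x, x\rangle_A$, and similarly for $S_k$. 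Summing over $k$ and taking the supremum over unit $A$-norm vectors produces
\[
\sup_{\|x\|_A=1}\sum_{k=1}^{n}\alpha |\langle S_k^{\sharp_A} T_k x, x\rangle_A|^2 \le \frac{\alpha}{2}\,\Big\|\sum_{k=1}^n \bigl((T_k^{\sharp_A} T_k)^2 + (S_k^{\sharp_A} S_k)^2\bigr)\Big\|_A,
\]
where in the last step I use that the operator in question is $A$-self-adjoint and $A$-positive, so $\omega_A(\cdot) = \|\cdot\|_A$.

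For the $\beta$-part, the bound is much easier: $\|S_k^{\sharp_A} T_k x\|_A \le \|S_k^{\sharp_A} T_k\|_A \|x\|_A$ gives, upon summing and taking supremum,
\[
\sup_{\|x\|_A=1}\sum_{k=1}^{n}\beta \|S_k^{\sharp_A} T_k x\|_A^2 \le \beta \sum_{k=1}^{n} \|S_k^{\sharp_A} T_k\|_A^2.
\]
Finally, combining both estimates via the inequality $\sup(f+g) \le \sup f + \sup g$ applied to the defining supremum of $\|\mathbf{S}^{\sharp_A}\mathbf{T}\|_{A_{\alpha,\beta}}^2$ yields the claim. The only nontrivial step is invoking Lemma \ref{L2} at the right moment to convert fourth powers of $A$-seminorms into inner products with squared operators; once this is done, the remainder is bookkeeping.
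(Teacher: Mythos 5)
Your proposal is correct and follows essentially the same route as the paper's proof: $A$-Cauchy--Schwarz on $\langle T_kx,S_kx\rangle_A$, AM--GM, then Lemma \ref{L2} (the paper applies it to $\langle T_k^{\sharp_A}T_kx,x\rangle_A^2$, which equals your $\|T_kx\|_A^4$), followed by the trivial bound $\|S_k^{\sharp_A}T_kx\|_A\le\|S_k^{\sharp_A}T_k\|_A$ for the $\beta$-part and a supremum at the end. No gaps; the two arguments differ only in notation.
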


\begin{proof}
Let $x\in {\mathcal{H}}$ and $\|x\|_A=1$. Then, we have
\begin{eqnarray*}
\sum\limits_{k=1}^{n}\left\vert \left\langle S_{k}^{\sharp
_{A}}T_{k}x,x\right\rangle _{A}\right\vert ^{2}
&=&\sum\limits_{k=1}^{n}\left\vert \left\langle T_{k}x,S_{k}x\right\rangle
_{A}\right\vert ^{2} \\
&\leq &\sum\limits_{k=1}^{n}\left\Vert T_{k}x\right\Vert _{A}^{2}\left\Vert
S_{k}x\right\Vert _{A}^{2} \quad
\text{(by the }A\text{-Cauchy-Schwarz inequality)} \\
&=&\sum\limits_{k=1}^{n}\left\langle T_{k}^{\sharp
_{A}}T_{k}x,x\right\rangle _{A}\left\langle S_{k}^{\sharp
_{A}}S_{k}x,x\right\rangle _{A} \\
&\leq &\sum\limits_{k=1}^{n}\frac{1}{2}\left( \left\langle T_{k}^{\sharp
_{A}}T_{k}x,x\right\rangle _{A}^{2}+\left\langle S_{k}^{\sharp
_{A}}S_{k}x,x\right\rangle _{A}^{2}\right) 
\text{(by the AM-GM inequality)} \\
&\leq &\sum\limits_{k=1}^{n}\frac{1}{2}\left( \left\langle \left(
T_{k}^{\sharp _{A}}T_{k}\right) ^{2}x,x\right\rangle +\left\langle \left(
S_{k}^{\sharp _{A}}S_{k}\right) ^{2}x,x\right\rangle _{A}\right) \quad
\text{(by Lemma \ref{L2})} \\
&=&\sum\limits_{k=1}^{n}\frac{1}{2}\left\langle \left( \left( T_{k}^{\sharp
_{A}}T_{k}\right) ^{2}+\left( S_{k}^{\sharp _{A}}S_{k}\right) ^{2}\right)
x,x\right\rangle _{A} \\
&\leq &\frac{1}{2}\left\Vert \sum\limits_{k=1}^{n}\left( \left(
T_{k}^{\sharp _{A}}T_{k}\right) ^{2}+\left( S_{k}^{\sharp _{A}}S_{k}\right)
^{2}\right) \right\Vert _{A}\text{.}
\end{eqnarray*}
We also have $\sum_{k=1}^n \left\Vert
S_{k}^{\sharp _{A}}T_{k}x\right\Vert _{A}^{2}\leq \sum_{k=1}^n  \left\Vert
 S_{k}^{\sharp _{A}}T_{k}\right\Vert_{A}^{2}.$
Therefore,
\begin{eqnarray*}
\sum\limits_{k=1}^{n}\left( \alpha \left\vert \left\langle S_{k}^{\sharp
_{A}}T_{k}x,x\right\rangle _{A}\right\vert ^{2}+\beta \left\Vert
S_{k}^{\sharp _{A}}T_{k}x\right\Vert _{A}^{2}\right) 
\leq \frac{\alpha }{2}\left\Vert \sum\limits_{k=1}^{n}\left( \left(
T_{k}^{\sharp _{A}}T_{k}\right) ^{2}+\left( S_{k}^{\sharp _{A}}S_{k}\right)
^{2}\right) \right\Vert +\beta \sum\limits_{k=1}^{n}\left\Vert S_{k}^{\sharp
_{A}}T_{k}\right\Vert ^{2}\text{.}
\end{eqnarray*}
Taking the supremum over all $x\in {\mathcal{H}}$ with $\left\Vert
x\right\Vert _{A}=1$, we get as desired.
\end{proof}

Applying Theorem \ref{TTTT} and Theorem \ref{TH1} (i), we deduce a bound for the $A$-Euclidean operator radius of the product of two $n$-tuples:

\begin{corollary}
\label{CW} Let $\mathbf{T}=\left( T_{1},\cdots ,T_{n}\right) \in {\mathcal{B}%
}_{A}\left( {\mathcal{H}}\right) ^{n}$, $\mathbf{S}=\left( S_{1},\cdots
,S_{n}\right) $ $\in {\mathcal{B}}_{A}\left( {\mathcal{H}}\right) ^{n}$. Then%
\begin{eqnarray*}
\omega _{A}^{2}\left( \mathbf{S}^{\sharp _{A}}\mathbf{T}\right) &\leq
&\inf_{\alpha ,\beta }\frac{1}{\alpha +\beta }\left\{ \frac{\alpha }{2}%
\left\Vert \sum\limits_{k=1}^{n}\left( \left( T_{k}^{\sharp
_{A}}T_{k}\right) ^{2}+\left( S_{k}^{\sharp _{A}}S_{k}\right) ^{2}\right)
\right\Vert _{A}+\beta \sum\limits_{k=1}^{n}\left\Vert S_{k}^{\sharp
_{A}}T_{k}\right\Vert _{A}^{2}\right\} \\
&\leq &\frac{1}{2}\left\Vert \sum\limits_{k=1}^{n}\left( T_{k}^{\sharp
_{A}}T_{k}\right) ^{2}+\left( S_{k}^{\sharp _{A}}S_{k}\right)
^{2}\right\Vert _{A}\text{.}
\end{eqnarray*}
\end{corollary}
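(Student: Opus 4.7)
The strategy is entirely to combine Theorem \ref{TTTT} with the lower bound from Theorem \ref{TH1}(i), applied to the $n$-tuple $\mathbf{S}^{\sharp_A}\mathbf{T}=(S_{1}^{\sharp_A}T_{1},\ldots,S_{n}^{\sharp_A}T_{n})$. Note that this $n$-tuple lies in ${\mathcal{B}}_{A^{1/2}}({\mathcal{H}})^{n}$ because ${\mathcal{B}}_{A}({\mathcal{H}})$ is closed under products (as noted in the preliminaries), so Theorem \ref{TH1}(i) applies and gives
\begin{equation*}
\omega_{A}^{2}(\mathbf{S}^{\sharp_A}\mathbf{T})\;\leq\;\frac{1}{\alpha+\beta}\,\bigl\|\mathbf{S}^{\sharp_A}\mathbf{T}\bigr\|_{A_{\alpha,\beta}}^{2}.
\end{equation*}

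To obtain the first inequality of the statement, I would substitute into the right-hand side the upper bound provided by Theorem \ref{TTTT}, namely
\begin{equation*}
\bigl\|\mathbf{S}^{\sharp_A}\mathbf{T}\bigr\|_{A_{\alpha,\beta}}^{2}\;\leq\;\frac{\alpha}{2}\Bigl\|\sum_{k=1}^{n}\bigl((T_{k}^{\sharp_A}T_{k})^{2}+(S_{k}^{\sharp_A}S_{k})^{2}\bigr)\Bigr\|_{A}+\beta\sum_{k=1}^{n}\|S_{k}^{\sharp_A}T_{k}\|_{A}^{2},
\end{equation*}
and then take the infimum over all admissible pairs $(\alpha,\beta)$ with $\alpha,\beta\geq 0$ and $(\alpha,\beta)\neq(0,0)$. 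This yields the first inequality verbatim.

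For the second inequality, the infimum is dominated by the value of the expression at any single admissible choice. The natural choice is $\alpha=1,\ \beta=0$: then $\tfrac{1}{\alpha+\beta}=1$, the $\beta$-term vanishes, and the bracket reduces to $\tfrac{1}{2}\|\sum_{k=1}^{n}((T_{k}^{\sharp_A}T_{k})^{2}+(S_{k}^{\sharp_A}S_{k})^{2})\|_{A}$, which is exactly the upper bound asserted. I expect no real obstacle here, since the proof is the composition of two already-established results; the only point requiring mild care is verifying admissibility of the specialization $(\alpha,\beta)=(1,0)$, which is permitted by the defining condition of the $A_{\alpha,\beta}$-seminorm.
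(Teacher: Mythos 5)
Your proposal is correct and follows exactly the route the paper intends: apply Theorem \ref{TH1}(i) to the $n$-tuple $\mathbf{S}^{\sharp_A}\mathbf{T}$, bound $\|\mathbf{S}^{\sharp_A}\mathbf{T}\|_{A_{\alpha,\beta}}^{2}$ by Theorem \ref{TTTT}, take the infimum over admissible $(\alpha,\beta)$, and obtain the second inequality by the specialization $(\alpha,\beta)=(1,0)$. Nothing further is needed.
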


Setting $n=1$ in Theorem \ref{TTTT}, we get

\begin{corollary}
Let $T,S\in {\mathcal{B}}_{A}\left( {\mathcal{H}}\right) $. Then%
\begin{equation*}
\left\Vert S^{\sharp _{A}}T\right\Vert _{A_{\alpha ,\beta }}^{2}\leq \frac{%
\alpha }{2}\left\Vert \left( T^{\sharp _{A}}T\right) ^{2}+\left( S^{\sharp
_{A}}S\right) ^{2}\right\Vert_A +\beta \left\Vert S^{\sharp _{A}}T\right\Vert
_{A}^{2}\text{.}
\end{equation*}
\end{corollary}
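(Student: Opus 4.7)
The plan is very short because the statement is a direct specialization of Theorem \ref{TTTT} to a one-component tuple. I would simply instantiate that theorem with $\mathbf{T}=(T)$ and $\mathbf{S}=(S)$ in $\mathcal{B}_A(\mathcal{H})^1$, so that $\mathbf{S}^{\sharp_A}\mathbf{T}=(S^{\sharp_A}T)$. The single-component seminorm $\|(S^{\sharp_A}T)\|_{A_{\alpha,\beta}}$ coincides with the operator version $\|S^{\sharp_A}T\|_{A_{\alpha,\beta}}$ recorded in the remark following Theorem \ref{TH2}, since the sums in the defining supremum collapse to one term.

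Concretely, I would write out the inequality from Theorem \ref{TTTT} and observe that the sums $\sum_{k=1}^n(T_k^{\sharp_A}T_k)^2+(S_k^{\sharp_A}S_k)^2$ and $\sum_{k=1}^n\|S_k^{\sharp_A}T_k\|_A^2$ reduce, for $n=1$, to $(T^{\sharp_A}T)^2+(S^{\sharp_A}S)^2$ and $\|S^{\sharp_A}T\|_A^2$ respectively. Substituting these into the right-hand side yields
\[
\|S^{\sharp_A}T\|_{A_{\alpha,\beta}}^2\le\frac{\alpha}{2}\bigl\|(T^{\sharp_A}T)^2+(S^{\sharp_A}S)^2\bigr\|_A+\beta\|S^{\sharp_A}T\|_A^2,
\]
which is exactly the assertion.

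There is no genuine obstacle here; the only thing worth flagging is the identification of the single-tuple seminorm with the operator seminorm, which is immediate from the definitions but should be stated explicitly so that the reader does not wonder whether any nontrivial reduction is required. Everything else is a direct substitution, so I would not attempt to redo the AM--GM and $A$-Cauchy--Schwarz computations carried out in the proof of Theorem \ref{TTTT}.
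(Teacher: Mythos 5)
Your proposal is correct and matches the paper exactly: the paper obtains this corollary precisely by setting $n=1$ in Theorem \ref{TTTT}, with the sums collapsing to single terms as you describe. No further argument is needed.
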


Again, from Corollary \ref{CW}, we deduce a new refinement of (\ref{6}) (for the case $n=2$).

\begin{corollary}
Let $T,S\in {\mathcal{B}}_{A}\left( {\mathcal{H}}\right) $. Then%
\begin{eqnarray*}
\omega ^{2}\left( S^{\sharp _{A}}T\right) &\leq& \inf_{\alpha ,\beta }\frac{1
}{\alpha +\beta }\left\{ \frac{\alpha }{2}\left\Vert \left( T^{\sharp
_{A}}T\right) ^{2}+\left( S^{\sharp _{A}}S\right) ^{2}\right\Vert +\beta
\left\Vert S^{\sharp _{A}}T\right\Vert _{A}^{2}\right\}\\ 
&\leq& \frac{1}{2}\left\Vert \left( T^{\sharp _{A}}T\right) ^{2}+\left(
S^{\sharp _{A}}S\right) ^{2}\right\Vert _{A}\text{.}
\end{eqnarray*}
\end{corollary}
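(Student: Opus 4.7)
The statement is the single-operator specialization of Corollary \ref{CW}. The plan is to take the $1$-tuples $\mathbf{T} = (T)$ and $\mathbf{S} = (S)$ in Corollary \ref{CW}. Then $\mathbf{S}^{\sharp_A}\mathbf{T} = (S^{\sharp_A}T)$ as $1$-tuples, so $\omega_A(\mathbf{S}^{\sharp_A}\mathbf{T}) = \omega_A(S^{\sharp_A}T)$ directly from the definition of the $A$-Euclidean operator radius (the sum in that definition has only one term). The two sums $\sum_{k=1}^{n}$ on the right-hand side of Corollary \ref{CW} likewise collapse to single terms, and the first inequality of the present claim falls out verbatim.

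For the second inequality, I would observe that the infimum over admissible $(\alpha,\beta)$ is bounded above by its value at any particular admissible choice. Taking $\alpha = 1$, $\beta = 0$ (admissible because only the joint condition $(\alpha,\beta) \neq (0,0)$ with $\alpha,\beta \geq 0$ is required), the bracketed expression reduces to $\tfrac{1}{2}\|(T^{\sharp_A}T)^2 + (S^{\sharp_A}S)^2\|_A$ while the prefactor $\tfrac{1}{\alpha+\beta}$ equals $1$, yielding exactly the claimed bound. This simultaneously recovers the refinement of \eqref{6} at the exponent $n=2$ advertised in the remark preceding the statement. No substantive obstacle arises: the entire argument is a one-line specialization of Corollary \ref{CW} together with evaluating the infimum at a single convenient boundary point $(\alpha,\beta) = (1,0)$.
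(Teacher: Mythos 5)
Your proposal is correct and matches the paper's own (implicit) argument: the corollary is exactly Corollary \ref{CW} specialized to $1$-tuples $\mathbf{T}=(T)$, $\mathbf{S}=(S)$, with the second bound obtained by evaluating the infimum at the admissible point $(\alpha,\beta)=(1,0)$, just as the paper does in its analogous earlier corollaries. The only cosmetic point is that the $\omega$ and the unsubscripted norm in the statement should be read as $\omega_A$ and $\Vert\cdot\Vert_A$, which you implicitly and correctly do.
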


\bigskip
\section*{Declarations}

\noindent  \textbf{Conflict of interest.} The authors declare that there is no conflict of interest.

\noindent  \textbf{Data availability.} No data was used for the research described in the article.

\end{document}